\documentclass[pdflatex,sn-mathphys-num]{sn-jnl}

\usepackage{pdflscape}

\usepackage{graphicx}%
\usepackage{multirow}%
\usepackage{amsmath,amssymb,amsfonts}%
\usepackage{amsthm}%
\usepackage{bm}
\usepackage{mathrsfs}%
\usepackage[title]{appendix}%
\usepackage[table]{xcolor}%
\usepackage{textcomp}%
\usepackage{manyfoot}%
\usepackage{booktabs}%
\usepackage{algorithm}%
\usepackage{algorithmicx}%
\usepackage{algpseudocode}%
\usepackage{listings}%

\theoremstyle{thmstyleone}%
\newtheorem{theorem}{Theorem}
\newtheorem{proposition}[theorem]{Proposition}%

\theoremstyle{thmstyletwo}%
\newtheorem{remark}{Remark}%

\theoremstyle{thmstylethree}%
\newtheorem{definition}{Definition}%

\newcommand{\R}{\mathbb{R}}
\newcommand{\relu}{\operatorname{ReLU}}

\newcommand{\wl}{W^{(\ell)}}
\newcommand{\bl}{b^{(\ell)}}
\newcommand{\zl}{z^{(\ell)}}
\newcommand{\xl}{\hat{x}^{(\ell)}}
\newcommand{\al}{a^{(\ell)}}
\newcommand{\Ll}{L^{(\ell)}}
\newcommand{\Ul}{U^{(\ell)}}

\begin{document}

\title[Relaxation-Informed Training]{Relaxation-Informed Training of \linebreak Neural Network Surrogate Models}

\author{\fnm{Calvin} \sur{Tsay}}\email{c.tsay@imperial.ac.uk}

\affil{\orgdiv{Department of Computing}, \orgname{Imperial College London}, \linebreak \orgaddress{\street{South Kensington}, \postcode{SW7 2AZ}, \country{United Kingdom}}}


\abstract{ReLU neural networks trained as surrogate models can be embedded exactly in mixed-integer linear programs (MILPs), enabling global optimization over the learned function. The tractability of the resulting MILP depends on structural properties of the network, i.e., the number of binary variables in associated formulations and the tightness of the continuous LP relaxation. These properties are determined during training, yet standard training objectives (prediction loss with classical weight regularization) offer no mechanism to directly control them. This work studies training regularizers that directly target downstream MILP tractability. Specifically, we propose simple bound-based regularizers that penalize the big-M constants of MILP formulations and/or the number of unstable neurons. Moreover, we introduce an LP relaxation gap regularizer that explicitly penalizes the per-sample gap of the continuous relaxation at training points. We derive its associated gradient and provide an implementation from LP dual variables without custom automatic differentiation tools. We show that combining the above regularizers can approximate the full total derivative of the LP gap with respect to the network parameters, capturing both direct and indirect sensitivities. Experiments on non-convex benchmark functions and a two-stage stochastic programming problem with quantile neural network surrogates demonstrate that the proposed regularizers can reduce MILP solve times by up to four orders of magnitude relative to an unregularized baseline, while maintaining competitive surrogate model accuracy.}

\maketitle

\section{Introduction}\label{sec1}

Neural network surrogate models have become a popular tool in mathematical optimization, enabling complicated or unknown functions to be replaced by trained parametric approximations that can then be embedded in optimization formulations~\cite{bertsimas2025global,bradley2022perspectives,misener2023formulating}. 
Feedforward neural networks with rectified linear unit (ReLU) activations are particularly attractive for this purpose: the piecewise-linear structure of the ReLU function (and thus the combined network) allows the trained model to be encoded exactly in a mixed-integer linear program (MILP), enabling branch-and-bound global optimization~\cite{grimstad2019relu,huchette2026deep,plate2026analysis}. 
Machine learning applications include NN verification/certification~\cite{botoeva2020efficient,rossig2021advances,sosnin2024certified,sosnin2026exact}, counterfactual explanations~\cite{kanamori2021ordered, tsiourvas2024manifold}, reinforcement learning~\cite{burtea2024constrained,ryu2019caql}, and model compression~\cite{benbaki2023fast,serra2021scaling}. 
Optimization applications include optimizing over black-box objectives~\cite{perakis2022optimizing,tong2024optimization,tong2025optimization}, constraint learning~\cite{fajemisin2024optimization,maragno2025mixed}, and stochastic programming~\cite{dumouchelle2023neur2ro,kronqvist2023alternating,patel2022neur2sp}. 
Toolkits such as JANOS~\cite{bergman2022janos}, OMLT~\cite{ceccon2022omlt}, and PySCIPOpt-ML~\cite{turner2025pyscipopt} have helped popularize this approach across a range of engineering domains, including process design, energy systems, and
planning~\cite{jalving2023beyond,lopez2024process,mcdonald2024mixed,schweidtmann2019deterministic}. 
We refer the reader to~\citet{huchette2026deep} for a comprehensive overview of the intersection between ReLU neural networks and MILP. 

For a given trained network, the complexity and tractability of associated MILP formulations depends is linked to its structural properties.
In the standard big-M formulation~\cite{fischetti2018deep,lomuscio2017approach,tjeng2017evaluating}, each hidden neuron with unknown activation state is encoded by introducing a binary variable.
The number of these variables effectively dictates the combinatorial search space of the branch-and-bound search.
Equally important is the strength of the continuous LP relaxation, obtained by relaxing each binary variable to a continuous variable in $[0,1]$. 
The LP provides a bound on the MILP optimum at every node of the branch-and-bound tree, and a loose relaxation forces the solver to explore more nodes before certifying global optimality. 
Both the number of binary variables and the LP relaxation gap remain important even in more sophisticated formulations~\cite{anderson2020strong,tsay2021partition}, and can be controlled by the variable bounds, which in turn depend on the network parameters~$\theta$ (obtained during training). 
Tight bounds can reduce big-M constants, stabilize neurons, and strengthen the LP relaxation, effectively resulting in more manageable MILP problems~\cite{badilla2023computational,sosnin2024scaling,zhao2024bound}.
However, computational approaches for tightening bounds often scale poorly with network size (e.g., requiring solving optimization problems for each neuron), and these strategies are applied \emph{after training}, with no mechanism to guide the network towards tractable structures during model training. 

Standard neural network training minimizes a prediction loss and may include classical weight regularization such as $\ell_1$ or $\ell_2$ penalties. 
While inclusion of these regularizers can improve downstream MILP tractability~\cite{plate2026analysis}, neither term in the training objective directly accounts for the downstream application(s) of the resulting surrogate model.
A model trained to high accuracy may have many unstable neurons or loose LP relaxation bounds, making subsequent MILP-based optimization intractable.
This decoupling of training and optimization is an important, but largely unexplored, source of inefficiency in the surrogate modeling pipeline.

This paper proposes a family of regularization terms that can be added to the standard training loss to explicitly target the factors governing MILP tractability. 
The key observation is that the pre-activation bounds are often (sub)differentiable functions of the network parameters~$\theta$ and can therefore be incorporated into regularizers for gradient-based training.
We derive the form and gradient of each proposed regularizer, and we establish formal relationships between them and the full derivative of the LP relaxation gap with respect to~$\theta$.
Furthermore, we show that the gradient of the LP relaxation itself can be computed efficiently using sensitivity of parametric linear programs~\cite{milgrom2002,fiacco1983} and incorporated into regularizers. 

The main contributions of this paper are as follows.
\begin{enumerate}
\item We derive two bound propagation-based
  regularizers: $\mathcal{R}_{\mathrm{BW}}$ (bound-width) and
  $\mathcal{R}_{\mathrm{SN}}$ (stable-neuron). We provide closed-form expressions for their subgradients via automatic differentiation through the bound propagation.
  The bound widths prescribe the big-M constants of the MILP, and their recursive structure through the network depth can be exploited for gradient computations at the cost of a single additional forward pass per training step.
\item We introduce the LP relaxation gap regularizer  $\mathcal{R}_{\mathrm{LP}}$, which directly penalizes the per-sample continuous relaxation of the MILP at each training  point.
  We derive and express its gradient in terms of the LP dual variables and the standard backpropagation gradient. An exact implementation via a straight-through estimator avoids the need for custom differentiation tools.
\item We establish a gradient decomposition  (Proposition~\ref{prop:combined}) showing that the combined regularizer $\mathcal{R}_{\mathrm{BW}} + \mathcal{R}_{\mathrm{LP}}$ approximates the   full total derivative of the LP gap with respect to~$\theta$, capturing   both the direct sensitivity through the constraint right-hand sides and   the indirect sensitivity through the big-M constants via IBP.
\item We evaluate all regularizers on benchmark surrogate functions and on large-scale stochastic programming tasks, measuring their effect on the number of unstable   neurons, LP relaxation gap, MILP node count, and solve time across a range of network architectures and regularization strengths.
\end{enumerate}

The remainder of the paper is organised as follows. We first contextualize our contribution in relation to existing work in Section~\ref{sec:related}. Section~\ref{sec:setup} then introduces neural network models, the training objective, the big-M MILP formulation, and bound propagation. Sections~\ref{sec:regularizers} and~\ref{sec:lp} derive the bound-based regularizers and the LP gap regularizer, respectively. 
Section~\ref{sec:combined} briefly analyzes the combined regularizer and its relation to the total derivative.
Computational experiments are reported in Section~\ref{sec:results}, and conclusions are drawn in Section~\ref{sec:conclusions}.

\subsection{Relation to existing work}
\label{sec:related}
The idea of training for downstream tractability MILP has parallels in the adversarial machine learning literature, where networks trained for certified robustness have been shown to exhibit fewer unstable neurons and tighter bounded output domains than their unregularized counterparts~\cite{xiao2019training,gowal2018effectiveness,mirman2018differentiable,zhang2020towards}. 
Here, `robustness' refers to provable immunity to adversarial perturbations within a norm ball around each input at inference~\cite{rossig2021advances} or training~\cite{sosnin2025abstract,sosnin2024certified} time. 
More generally, the motivation of training a model with its downstream optimization use in mind is shared with the \emph{decision-focused learning} literature~\cite{mandi2024decision}, e.g., `smart predict-then-optimize'~\cite{elmachtoub2022smart} and task-based learning~\cite{donti2017task}.

While the regularizers developed in Sections~\ref{sec:regularizers}--\ref{sec:lp} share some technical components with the verification literature (notably differentiable bounds and penalties that target neuron stability), our motivation, formulation, and context differ in several important respects detailed below. 
On the other hand, while we consider a similar application as the decision-focused learning literature, the proposed regularization strategies purely target downstream MILP tractability, rather than improving the quality of decisions. 

\textbf{Application domain.}
Certified training methods target \emph{adversarial robustness} of classifiers, typically defined as classification accuracy under worst-case perturbation in an $\epsilon$-ball around each test image. 
Note that this problem can often be solved without finding the worst-case perturbation (i.e., completely solving a MILP): it requires only a successful worst-case perturbation or safe bound~\cite{rossig2021advances, tsay2021partition}. 
On the other hand, the proposed methods primarily target \emph{surrogate models for optimization}. In this setting, the network approximates a continuous function over a known box domain~$\mathcal{X}$, and the downstream task is to solve a MILP over/involving the trained surrogate~\cite{grimstad2019relu}. 
In contrast to the verification literature, the relevant metrics therefore include optimization properties such as LP relaxation gap and MILP solve time.  

\textbf{Training for certification.}
In certified training, intermediate variable bounds are used to compute an output-level bound, rather than as targets themselves~\cite{gowal2018effectiveness,zhang2020towards}.  
Nevertheless, in MILP intermediate bounds directly influence the tightness of the overall formulation, and we therefore quantify and regularize with total bound width across all hidden neurons (an objective with no analog in the certified training literature). 
Furthermore, we propose regularizers directly targeting the relaxation gap, which is entirely specific to the MILP surrogate setting.
Most similar to the present work methodologically are regularizers targeting neuron stability, which can accelerate verification of classification networks~\cite{xiao2019training,dvijotham2018training}.
The present work studies stability regularizers for the MILP setting, where the network is embedded as an objective or constraint in a downstream optimization problem rather than verified for a fixed property. 
Finally, we note the certified training literature typically studies multi-class classifiers with cross-entropy loss and specification-based margins.
Our setting involves scalar regression surrogates trained with MSE loss, where there is no class margin to certify, and the relevant relaxation is the continuous relaxation of the surrogate MILP, not the convex outer adversarial polytope.
This definition of model quality gives a fundamentally different perspective to the tradeoff between accuracy and tractability. 

\textbf{Decision-focused learning.} 
Many decision-focused learning methods modify the training loss of the surrogate model completely~\cite{elmachtoub2022smart,TangKhalil2024}, e.g., minimizing task loss or regret rather than prediction error alone. These modifications aim to improve the quality of decisions produced in downstream applications. 
Most similar to the present work in motivation are regularizers targeting gradients of the learned surrogate model, with the purpose of improving \emph{optimization performance} in downstream (gradient-based) optimization~\cite{amos2017input,rosemberg2025sobolev,tsay2021sobolev}. 
The present paper takes an orthogonal approach within this broader family.
Here, our objective is not to improve the quality of the solution found (which is also an important problem), but to reduce the computational cost of finding it via MILP. 
In other words, decision-focused learning assumes the downstream problem is solvable and asks how to improve solution quality; this paper assumes a useful surrogate and asks how to make it tractable.

\section{Background}
\label{sec:setup}

\subsection{Neural network notation}
We consider feed-forward neural networks (NNs), which are directed acyclic graphs comprising nodes/neurons structured into $L$ hidden layers.
At each layer $l=1,...,L+1$, the NN contains nodes that receive the outputs of nodes in the preceding layer ($l-1$) as inputs. Each node then computes a weighted sum of its inputs (known as the preactivation), and applies a nonlinear activation function to this computed term. While many options for activation function are available, we focus on the ReLU activation function
\[
y= \max (0,w^Tx + b),
\]
which is amenable to mixed-integer linear programming (MILP) formulations, given its piecewise-linear form~\cite{huchette2026deep}. 

Mathematically, we denote a feedforward neural network model as
$f_\theta : \R^{n_0} \to \R$
with $L$ hidden layers, ReLU activations, and a linear output layer.
Denote the weight matrix and bias vector at layer $\ell$ by
$\wl \in \R^{n_\ell \times n_{\ell-1}}$ and $\bl \in \R^{n_\ell}$,
respectively, for $\ell = 1, \dots, L{+}1$.
The collective parameter vector for the model is expressed as 
$\theta = \{(\wl, \bl)\}_{\ell=1}^{L+1}$.
Finding the values for $\theta$, e.g., to best fit a given dataset, is referred to as \emph{training} the NN model. 

For an input $x \in \R^{n_0}$, evaluation of the neural network $f_\theta(x)$ is referred to as a \textit{forward pass}. In particular, the forward pass computes:
\begin{align}
  \zl_j   &= \wl_j \xl[\ell-1] + \bl_j,
  &\ell &= 1,\dots,L{+}1, \quad j = 1,\dots,n_\ell, \label{eq:preact} \\[4pt]
  \xl_j   &= \relu(\zl_j) = \max(0,\, \zl_j),
  &\ell &= 1,\dots,L, \label{eq:relu}
\end{align}
with $\hat{x}^{(0)} = x$ being the input and $f_\theta(x) = z^{(L+1)}$ being the scalar output (note the lack of nonlinear activation at the output layer).
We consider the input domain as a box
$\mathcal{X} = \{x : x^{\mathrm{lb}} \le x \le x^{\mathrm{ub}}\}$, noting that in general, other constraints on $x$ could be added in later steps. 

\subsection{Training the neural network}
\label{sec:objective}

The neural network parameters are trained using training data $\{(x_i, y_i)\}_{i=1}^N$, e.g., samples drawn from a function we wish to approximate $g : \mathcal{X} \to \R$. Generally, we find values of $\theta$ by minimizing empirical loss over the training data. For regression tasks, this is often taken using the mean squared error (MSE):
\[
\mathcal{L}_{\mathrm{MSE}} = \frac{1}{N}\sum_{i=1}^N \bigl(f_\theta(x_i) - y_i\bigr)^2.
\]

To avoid overfitting (or otherwise guide the training process), regularization terms can be appended to the loss function:
\begin{equation}\label{eq:loss}
  \mathcal{L}(\theta)
  \;=\;
\mathcal{L}_{\mathrm{MSE}}
  \;+\;
  \lambda\,\mathcal{R}(\theta),
\end{equation}
where $\mathcal{R}(\theta)$ is a regularization term and $\lambda > 0$ controls the trade-off between accuracy and regularization. 
We refer the reader to \citet{goodfellow2016deep} for a comprehensive overview of this training paradigm. 

Training the neural network is typically performed using gradient-based optimization methods, requiring the computation of $\nabla_\theta \mathcal{L}$, e.g., using back-propagation. 
We observe that gradients are therefore required for both terms in \eqref{eq:loss}, $\nabla_\theta \mathcal{L}_{\mathrm{MSE}}$ and $\nabla_\theta \mathcal{R}(\theta)$. 
This study precisely aims to introduce regularizers $\mathcal{R}(\theta)$ targeting the MILP tractability of the resulting NN surrogate. 
In Section~\ref{sec:regularizers}, we explicitly derive the form and gradient of each regularizer $\mathcal{R}$ we consider.

\subsection{Mixed-integer optimization formulation}
\label{sec:bigm}

\begin{figure}[hb]
\centering
    \includegraphics[width=0.9\textwidth]{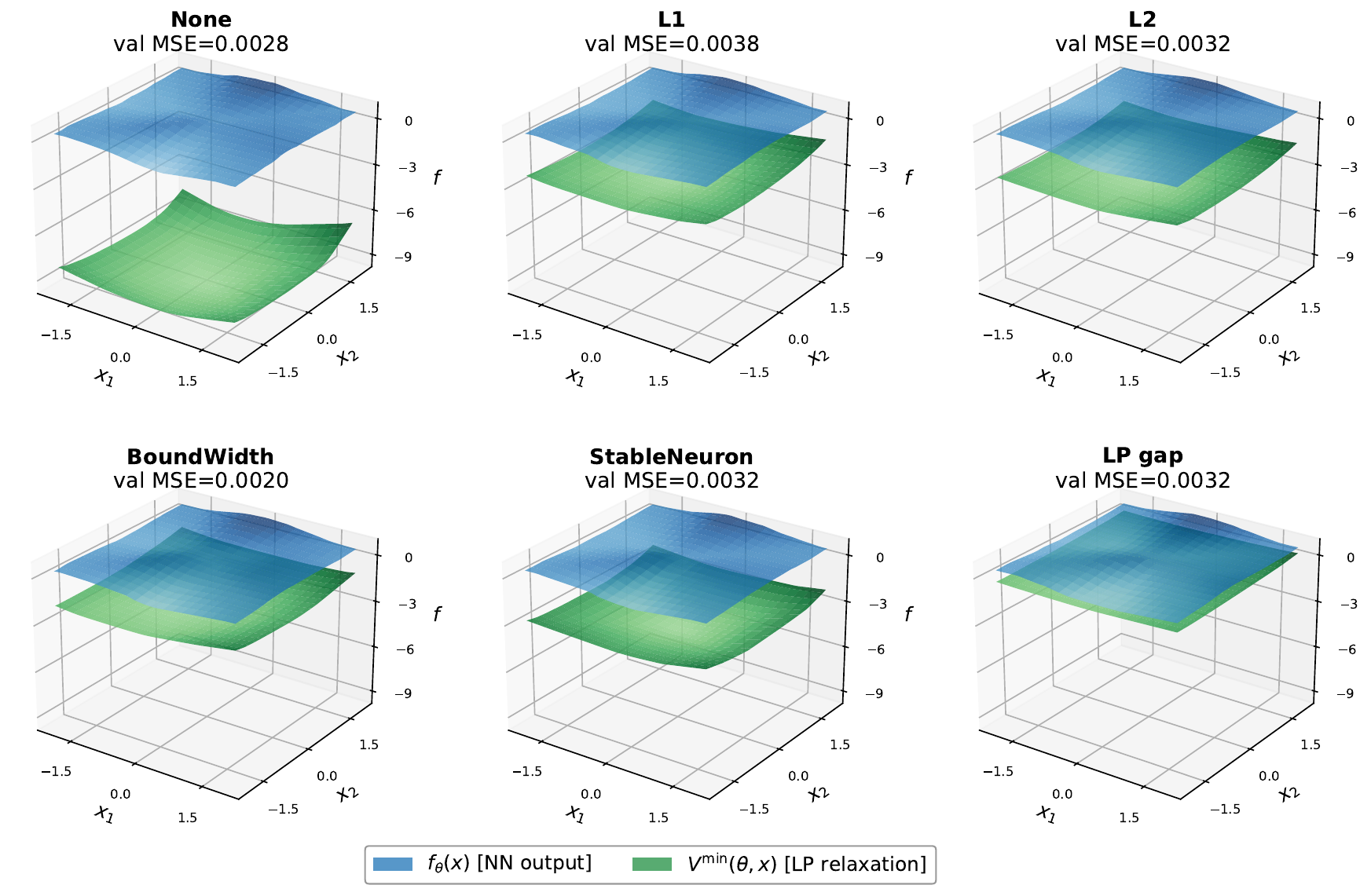}
    \caption{Predictions and LP lower bounds for NN models with \{32,32\} hidden layers trained on the scaled Peaks function with two inputs ($x_1$,$x_2$), with output $f(x)$. Different regularizers are applied during training, with weights chosen to maintain validation MSE of similar scale.}
    \label{fig:relaxations_3d}
\end{figure}

In contrast to the training of NNs (where the parameters $\theta$ are decision variables), optimization over a NN surrogate seeks to compute extreme cases for an \textit{already trained} model.  
In other words, the parameters $\theta$ are fixed, and we optimize over $f_\theta$ as a fixed function (or embed it within constraints in a larger problem). 
This step therefore requires formulating $f_\theta$ over $\mathcal{X}$ as optimization constraints. 

In MILP formulations, each ReLU unit is commonly
modelled with a binary variable $\al_j \in \{0,1\}$ indicating
whether neuron $j$ at layer $\ell$ is active ($\al_j = 1$) or
inactive ($\al_j = 0$).
Let $\Ll_j \le \zl_j \le \Ul_j$ be valid pre-activation bounds.
The big-M formulation of
$\xl_j = \relu(\zl_j)$
is~\cite{fischetti2018deep,lomuscio2017approach,tjeng2017evaluating}:
\begin{subequations}\label{eq:bigm}
\begin{alignat}{2}
  \xl_j &\ge \zl_j, \label{eq:bigm-lb1} \\
  \xl_j &\ge 0, \label{eq:bigm-lb2} \\
  \xl_j &\le \zl_j - \Ll_j\,(1 - \al_j), \label{eq:bigm-ub1} \\
  \xl_j &\le \Ul_j\,\al_j, \label{eq:bigm-ub2}
\end{alignat}
\end{subequations}
for each hidden neuron $(\ell, j)$, $\ell = 1, \dots, L$. The tractability of this formulation hinges on the values used for the bounds $\Ll_j$ and $\Ul_j$, i.e., the big-M coefficients~\cite{sosnin2024scaling}. Notice that smaller values for these coefficients yield tighter constraints~\eqref{eq:bigm-ub1}--\eqref{eq:bigm-ub2}. Ideally, these bounds are taken to be as tight as possible such that they remain valid, with $\zl_j \in [\Ll_j, \Ul_j]$. 
We note that several interesting alternatives to the big-M formulation have been proposed~\cite{anderson2020strong,tsay2021partition}, but it remains popular given its simplicity.  

\begin{remark}[Strength of continuous relaxation]
MILP is often solved using branch-and-bound algorithms, which leverage a cheaper, continuous relaxation to bound the objective at each node of the search tree. 
The solver then explores the domain over decision variables by `branching' until the gap between the best feasible objective value found and the tightest relaxation found falls below a given tolerance. A tighter, or \emph{stronger}, relaxation can reduce this search tree considerably. 
The continuous relaxation of~\eqref{eq:bigm} is obtained by relaxing $\al_j \in \{0,1\}$ to $\al_j \in [0,1]$, yielding a linear program (LP) whose optimal value bounds the MILP optimum.
The \emph{LP relaxation gap} is the difference between this LP bound and the MILP optimum. 
Since the LP relaxation of~\eqref{eq:bigm-ub1}--\eqref{eq:bigm-ub2}
tightens as $|\Ll_j|$ and $\Ul_j$ decrease, the choice of bounds is a primary determinant of relaxation strength, and therefore MILP solve efficiency. 
Figure~\ref{fig:relaxations_3d} illustrates the predictions and continuous relaxations for several NN models. 
\end{remark}

\begin{definition}[Neuron stability]\label{def:stable}
A neuron is \emph{stable active} if $\Ll_j \ge 0$
(the ReLU never turns off), in which case $\xl_j = \zl_j$ and $\al_j = 1$ can be fixed. 
Likewise, a neuron is \emph{stable inactive} if $\Ul_j \le 0$
($\xl_j = 0$, $\al_j = 0$). 
A neuron is said to be \emph{stable} if it is either stable active or inactive; for stable neurons, the value of $\al_j$ is fixed, and no binary variable is required. 
The set of \emph{unstable} neurons requiring a binary
variable to formulate using \eqref{eq:bigm} is therefore defined:
\begin{equation}\label{eq:num_unstable}
  \mathcal{U} \;=\; \bigl\{(\ell, j) : \Ll_j < 0 < \Ul_j\bigr\}.
\end{equation}
The number of (unfixed) binary variables in the MILP resulting from applying \eqref{eq:bigm} to all neurons equals $|\mathcal{U}|$.
\end{definition}

\subsection{Obtaining and tightening bounds}
\label{sec:ibp}

Given the input domain $\mathcal{X} = [x^{\mathrm{lb}},\, x^{\mathrm{ub}}]$,
simple valid pre-activation bounds $\Ll_j, \Ul_j$ can be computed by applying interval arithmetic layer by layer. 
This is also referred to as interval bound propagation, or IBP. 
Let $\hat{l}^{(\ell)}, \hat{u}^{(\ell)}$ denote the post-activation
(post-ReLU) bounds at layer~$\ell$, with the input layer defined by given bounds $\hat{l}^{(0)} = x^{\mathrm{lb}}$, $\hat{u}^{(0)} = x^{\mathrm{ub}}$. 

Valid pre-activation bounds for a layer can be computed using interval arithmetic:
\begin{subequations}\label{eq:ibp}
\begin{align}
  \Ll_j &= [\wl_j]^{+}\, \hat{l}^{(\ell-1)}
          + [\wl_j]^{-}\, \hat{u}^{(\ell-1)}
          + \bl_j,
  \label{eq:ibp-lb} \\[2pt]
  \Ul_j &= [\wl_j]^{+}\, \hat{u}^{(\ell-1)}
          + [\wl_j]^{-}\, \hat{l}^{(\ell-1)}
          + \bl_j,
  \label{eq:ibp-ub}
\end{align}
\end{subequations}
where the operators $[v]^{+} = \max(v, 0)$ and $[v]^{-} = \min(v, 0)$ are applied
element-wise.
The ReLU function output is nonnegative, and the post-ReLU bounds for hidden layers can be further tightened:
\begin{equation}\label{eq:ibp-post}
  \hat{l}^{(\ell)}_j = \max(\Ll_j, 0), \qquad
  \hat{u}^{(\ell)}_j = \max(\Ul_j, 0).
\end{equation}

Interval arithmetic methods do not provide the tightest valid bounds in general, as dependencies between the input nodes are ignored. Propagating the resulting over-approximated bounds through the layers of a neural network leads to increasingly large over-approximations; in other words, propagating weak bounds through layers results in a model with significantly weaker continuous relaxation. 
Tighter bounds could potentially be obtained using optimization-based bound tightening (OBBT), i.e., solving an optimization problem with the objective set to minimize/maximize a particular pre-activation term to provide its bounds~\cite{badilla2023computational,zhao2024bound}. To reduce the computational cost of OBBT problems, OBBT can be performed using relaxations or problem-based decompositions~\cite{sosnin2024scaling}. In contrast to interval arithmetic, bounds obtained using OBBT can incorporate variable dependencies. 
In this work, we focus on IBP bounds given their popularity. 

A key observation is that the operations in the IBP recursion~\eqref{eq:ibp}--\eqref{eq:ibp-post} are compositions of affine maps and element-wise $\max(\cdot, 0)$, similar to the ReLU forward pass. The IBP process is therefore subdifferentiable with respect to the NN parameters~$\theta$.
The subgradients are well-defined almost everywhere and can be computed by automatic differentiation (e.g., in PyTorch or JAX).
This property enables the IBP bounds to be incorporated directly into gradient-based training as differentiable regularization terms, as developed in the following sections.

\section{MILP-informed regularization}
\label{sec:regularizers}
We now introduce regularizers $\mathcal{R}(\theta)$ for use in the training objective~\eqref{eq:loss} that target MILP tractability. 
For instance, Figure~\ref{fig:relaxations_3d} shows the predictions and LP lower bounds for trained NN models on the Peaks function. 
We begin with standard shrinkage penalties, which serve as baselines, and then present two IBP-based regularizers that directly target the mechanisms governing MILP difficulty. 

\subsection{Shrinkage regularization}
Shrinkage regularization is a strategy that aims to improve model generalizability and reduce overfitting by penalizing large parameter values, effectively `shrinking' them towards zero. Shrinking the parameter values manages the bias-variance trade-off by introducing a small amount of bias to (significantly) reduce model variance. Common methods here include Ridge (L2) and Lasso (L1) regression. 
These methods may produce models with tighter bounds, as shrinking the values of $\wl_j$ can directly improve the bounds obtained using \eqref{eq:ibp}. 
\citet{plate2026analysis} find that increasing shrinkage regularization can produce neural networks with a lower number of linear regions, improving performance in downstream MILP.
\citet{manngaard2018structural} study methods using these regularizers to explicitly induce weight sparsity. 

\paragraph{L1 regularization}
\begin{equation}\label{eq:l1}
  \mathcal{R}_{\mathrm{L1}}(\theta)
  = \sum_{\ell=1}^{L+1}
    \bigl(\lVert \wl \rVert_1 + \lVert \bl \rVert_1\bigr),
\end{equation}
where $\lVert \cdot \rVert_1$ denotes the entry-wise $\ell_1$ norm.
This promotes weight sparsity and indirectly reduces IBP bound widths, as $\Ul_j - \Ll_j$ scales with $\lVert \wl_j \rVert_1$ (see~\eqref{eq:bw-width} below).  
However, it does not account for the layered, recursive structure
of bound propagation and treats all parameters uniformly regardless
of their role in the MILP formulation.

\paragraph{L2 regularization}
\begin{equation}\label{eq:l2}
  \mathcal{R}_{\mathrm{L2}}(\theta)
  = \sum_{\ell=1}^{L+1}
    \bigl(\lVert \wl \rVert_F^2 + \lVert \bl \rVert_2^2\bigr),
\end{equation}
where $\lVert \cdot \rVert_F$ denotes the Frobenius norm.
Again, this can indirectly shrink big-M values but does not directly target bound widths or neuron stability.

\subsection{Bound-width regularization}
\label{sec:bw}

Define the \emph{width} of the IBP pre-activation bound at neuron $(\ell, j)$ as
$\Delta^{(\ell)}_j = \Ul_j - \Ll_j$.
We introduce a bound-width regularizer, which simply penalizes the mean bound width obtained across all hidden neurons:
\begin{equation}\label{eq:bw}
  \mathcal{R}_{\mathrm{BW}}(\theta)
  \;=\;
  \frac{1}{\sum n_\ell} \sum_{\ell=1}^{L}\sum_{j=1}^{n_\ell}
    \Delta^{(\ell)}_j
  \;=\;
  \frac{1}{\sum n_\ell}  \sum_{\ell=1}^{L}\sum_{j=1}^{n_\ell}
    \bigl(\Ul_j - \Ll_j\bigr).
\end{equation}

Subtracting~\eqref{eq:ibp-lb} from \eqref{eq:ibp-ub} gives the bound width at layer $\ell$ as:
\begin{equation}\label{eq:bw-width}
  \Delta^{(\ell)}_j
  \;=\;
  \bigl([\wl_j]^{+} - [\wl_j]^{-}\bigr)
  \bigl(\hat{u}^{(\ell-1)} - \hat{l}^{(\ell-1)}\bigr)
  \;=\;
  |\wl_j|\,\Delta^{(\ell-1)}_{\mathrm{post}},
\end{equation}
where $|\wl_j|$ denotes the element-wise absolute value of the $j$-th row, and $\Delta^{(\ell-1)}_{\mathrm{post}}$ is the vector of post-ReLU bound widths at layer $\ell{-}1$.
The post-ReLU bound widths satisfy $\Delta^{(\ell)}_{\mathrm{post},j} \le \Delta^{(\ell)}_j$ by~\eqref{eq:ibp-post}, so the layer-wise recursion~\eqref{eq:bw-width} shows that bound widths compound multiplicatively through the network depth.

\paragraph{Gradient.}
Since the post-ReLU bound widths $\Delta^{(\ell-1)}_{\mathrm{post}}$
depend in turn on earlier layers through the recursion~\eqref{eq:ibp}--\eqref{eq:ibp-post},
the total gradient $\partial \mathcal{R}_{\mathrm{BW}} / \partial \theta$
captures the full chain of IBP bound propagation through the NN.
In our experiments we directly implement \eqref{eq:bw-width} in PyTorch, and its gradient is computed automatically by PyTorch's reverse-mode automatic differentiation. 
Note that this requires implementing the (subdifferentiable) `IBP forward pass,' i.e., propagating bounds through the layers of the neural network using~\eqref{eq:ibp}--\eqref{eq:ibp-post}. 
The computational cost is one IBP forward pass per training step. 
Note that more advanced OBBT propagation schemes may be incorporated as regularizers following the differentiable optimization procedures in Section~\ref{sec:lp}. 

\paragraph{Interpretation.}
Including $\mathcal{R}_{\mathrm{BW}}$ as a regularization term in \eqref{eq:loss} explicitly penalizes the magnitude of big-M constants in the downstream MILP formulation. 
For an unstable neuron, we observe that  $|\Ll_j| \le \Delta^{(\ell)}_j$ and
$\Ul_j \le \Delta^{(\ell)}_j$.
Reducing $\Delta^{(\ell)}_j$ therefore simultaneously shrinks both big-M values in \eqref{eq:bigm-ub1}--\eqref{eq:bigm-ub2}, tightening the LP relaxation. 
When the bounds are both positive or negative, the neuron is stable, and no binary variable is required (Definition~\ref{def:stable}). 

\begin{remark}
An alternative view of $\mathcal{R}_{\mathrm{BW}}$ is the direct penalization of the magnitude of big-M constants, i. e., the product of weight magnitudes and input bound ranges. In this case, this is exactly represented by~\eqref{eq:bw-width}, as IBP bound widths (composed recursively through the layers) are precisely the big-M constants. Nevertheless, in more sophisticated MILP formulations without big-M constants, corresponding regularization terms can still be derived based on the width of the involved bounds. 
\end{remark}

\subsection{Stability regularization}
\label{sec:sn}

While the inclusion of $\mathcal{R}_{\mathrm{BW}}$ can strengthen the continuous LP relaxation by tightening bounds involved, the \emph{combinatorial} difficulty of the MILP is governed by the number of binary variables, and therefore unstable neurons $|\mathcal{U}|$ in \eqref{eq:num_unstable}. 
Both the relaxation tightness and the number of discrete combinations in a search tree impact the efficiency of branch-and-bound algorithms. 
As given in Definition~\ref{def:stable}, neuron $(\ell, j)$ is unstable when its pre-activation bounds straddle zero: $\Ll_j < 0 < \Ul_j$. 
A naive approach could directly penalize the number of unstable neurons $|\mathcal{U}|$.

Nevertheless, knowing bounds $\Ll_j$ and $\Ul_j$ also informs us how `close' a neuron is to being stable, e.g., how close the bounds are to zero. 
Based on this idea, we introduce a regularization term that penalizes the mean ``distance to stability'' to encourage stable nodes during training:
\begin{equation}\label{eq:sn}
  \mathcal{R}_{\mathrm{SN}}(\theta)
  \;=\;
  \frac{1}{\sum n_\ell}  \sum_{\ell=1}^{L}\sum_{j=1}^{n_\ell}
    \min\!\bigl([-\Ll_j]^{+},\; [\Ul_j]^{+}\bigr),
\end{equation}
where $[v]^{+} = \max(v, 0)$.
For a stable neuron ($\Ll_j \ge 0$ or $\Ul_j \le 0$), at least one of
$[-\Ll_j]^{+}$ or $[\Ul_j]^{+}$ is zero, so the contribution to $\mathcal{R}_{\mathrm{SN}}$ is zero.
For an unstable neuron, $[-\Ll_j]^{+} = |\Ll_j|$ and
$[\Ul_j]^{+} = \Ul_j$, and the contribution to $\mathcal{R}_{\mathrm{SN}}$ is
$\min(|\Ll_j|, \Ul_j) > 0$.
In other words, the regularizer pushes either $\Ll_j$ upward toward zero (making the neuron
stably active) or $\Ul_j$ downward toward zero (making it stably inactive), whichever requires the smaller change. 
We note that, even if this does not force the neuron to be stable, pusing one of the bounds closer to zero may still produce a tighter continuous relaxation (Figure~\ref{fig:regularizers}).

\begin{proposition}[Subgradient of~$\mathcal{R}_{\mathrm{SN}}$]\label{prop:sn-grad}
The subgradient of~\eqref{eq:sn} with respect to~$\theta$ is
\begin{equation}\label{eq:sn-grad}
  \frac{\partial \mathcal{R}_{\mathrm{SN}}}{\partial \theta}
  \;=\;
  \sum_{(\ell,j) \in \mathcal{U}}
  \begin{cases}
    \displaystyle
    -\frac{\partial \Ll_j}{\partial \theta},
      & \text{if } |\Ll_j| < \Ul_j, \\[8pt]
    \displaystyle
    \phantom{-}\frac{\partial \Ul_j}{\partial \theta},
      & \text{if } \Ul_j < |\Ll_j|,
  \end{cases}
\end{equation}
where $\partial \Ll_j / \partial \theta$ and
$\partial \Ul_j / \partial \theta$ are obtained from automatic
differentiation through the IBP recursion. 
At the non-differentiable point $|\Ll_j| = \Ul_j$, any convex
combination of the two cases is a valid subgradient.
\end{proposition}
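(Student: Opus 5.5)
The plan is to apply the chain rule to the composition that defines~\eqref{eq:sn}, treating each neuron's term $\phi_j^{(\ell)}(\theta) = \min\bigl([-\Ll_j]^{+}, [\Ul_j]^{+}\bigr)$ separately and then summing by linearity. (The factor $1/\sum n_\ell$ is a positive constant; it scales every term and is suppressed in~\eqref{eq:sn-grad}, so I will carry it along implicitly.) By the observation in Section~\ref{sec:ibp}, $\Ll_j$ and $\Ul_j$ are compositions of affine maps and element-wise $\max(\cdot,0)$ in $\theta$. Hence they are locally Lipschitz and piecewise affine, and their (sub)gradients are exactly what automatic differentiation through~\eqref{eq:ibp}--\eqref{eq:ibp-post} returns. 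It therefore suffices to compute the subdifferential of the outer scalar function
\[
\psi(L,U) = \min\bigl(\max(-L,0), \max(U,0)\bigr)
\]
at $(\Ll_j,\Ul_j)$ and compose it with these inner derivatives.

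I would split into cases according to the sign pattern. \textbf{Stable neurons:} if $\Ll_j > 0$, then $[-\Ll_j]^{+}=0$ on a neighbourhood, and since $[\Ul_j]^{+}\ge 0$, the minimum is identically $0$ near this point. The same holds if $\Ul_j<0$. In either case the term is locally constant and contributes nothing, which is why the sum in~\eqref{eq:sn-grad} ranges only over $\mathcal{U}$. \textbf{Unstable neurons} ($\Ll_j<0<\Ul_j$): by continuity both inequalities persist on a neighbourhood, so there $\psi = \min(-\Ll_j, \Ul_j)$. If $-\Ll_j<\Ul_j$ strictly, the minimum is locally attained by the first argument, and the chain rule gives $-\partial\Ll_j/\partial\theta$. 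If $\Ul_j<-\Ll_j$, it gives $\partial\Ul_j/\partial\theta$. \textbf{Tie} $|\Ll_j|=\Ul_j$: here I invoke the standard fact that for a pointwise minimum of two smooth (or piecewise-affine) functions with both active, the Clarke generalized gradient is contained in the convex hull of the two active gradients. Every element of that hull is a valid (Clarke) subgradient, in the sense that the min of two functions is locally the negative of a max and its generalized gradient is the hull of the active pieces.

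The main obstacle is making ``subgradient'' precise. $\mathcal{R}_{\mathrm{SN}}$ is neither convex nor concave in $\theta$, and the inner maps $\Ll_j,\Ul_j$ are themselves only piecewise differentiable. So the statement should be read either as an almost-everywhere gradient or in the Clarke sense. I would first state that on the full-measure set where no IBP $\max(\cdot,0)$ is at a kink and no tie $|\Ll_j|=\Ul_j$ or boundary $\Ll_j=0$, $\Ul_j=0$ occurs, all maps are differentiable. On that set the formula is an exact gradient by the ordinary chain rule and linearity of the sum. At the remaining points I would appeal to the Clarke chain rule for compositions of locally Lipschitz piecewise-affine maps. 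This yields inclusion of the generalized gradient in the convex hull of the piece gradients, which justifies the convex-combination statement.

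Finally, I would note the two boundary edge cases $\Ll_j=0$ or $\Ul_j=0$. There the term is $0$, but a one-sided increase would make it positive. Zero is nevertheless a valid selection, because the minimum of a nonnegative function is attained there. This is consistent with excluding such neurons from $\mathcal{U}$ as defined in~\eqref{eq:num_unstable}.
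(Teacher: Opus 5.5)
Your proposal is correct and follows essentially the paper's own reasoning: the case analysis stated just before the proposition (stable neurons contribute zero, unstable ones contribute $\min(|\Ll_j|,\Ul_j)$), followed by the chain rule through whichever branch of the minimum is active. Your Clarke-sense handling of ties and of the boundary cases $\Ll_j=0$ or $\Ul_j=0$, and your note that the factor $1/\sum n_\ell$ is suppressed in~\eqref{eq:sn-grad}, make that argument more careful. Two small corrections follow.

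First, for $\ell\ge 2$ the IBP bounds are piecewise polynomial (multilinear across layers) in $\theta$, not piecewise affine, because the recursion multiplies $\wl$ by bounds that depend on earlier weights. Local Lipschitz continuity and almost-everywhere differentiability still hold, so this does not affect your argument.

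Second, the general Clarke chain rule only gives an inclusion of the generalized gradient in the convex hull. So the claim that \emph{every} convex combination is a valid subgradient rests on your tie-point argument with $\Ll_j,\Ul_j$ smooth near $\theta$, not on that inclusion.
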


In our implementation, we use the PyTorch \texttt{torch.minimum} function, which handles the subgradient at the tie point $|\Ll_j| = \Ul_j$ automatically. 

A related line of work aims to produce networks that are not only robust, but also \emph{easy to verify exactly} using MILP-based solvers.
\citet{xiao2019training} identify weight sparsity and ReLU stability as two network properties that reduce exact verification time.
They employ $\ell_1$ regularization and small-weight pruning to
promote sparsity, and introduce an alternative regularizer (termed RS loss in \cite{xiao2019training}) targeting ReLU stability. We denote this as an alternative stability regularizer:
\begin{equation}\label{eq:rs-loss}
  \mathcal{R}_{\mathrm{SN2}}
  \;=\;
  \frac{1}{\sum n_\ell}  \sum_{\ell=1}^{L}\sum_{j=1}^{n_\ell}
  -\tanh\!\bigl(1 + \Ul_j \cdot \Ll_j \bigr),
\end{equation}
where $\Ul_j$ and $\Ll_j$ are again upper
and lower bounds on the pre-activation of neuron~$(\ell,j)$.
When both bounds have the same sign (stable neuron), the
product~$\Ul_j \cdot \Ll_j$ is positive and the penalty is small;
when the bounds straddle zero (unstable neuron), the product is
negative and the penalty increases.
The authors found that adding this regularizer to the adversarial training objective reduces unstable neuron counts and yields considerable speedups in MILP-based verification time.

The stability regularizer $\mathcal{R}_{\mathrm{SN}}$~\eqref{eq:sn} is conceptually related to the RS Loss~\eqref{eq:rs-loss} of~\citet{xiao2019training}: both encourage neurons to be stably active or inactive.
Nevertheless, the formulations differ practically. Our proposed regularizer $\mathcal{R}_{\mathrm{SN}}$ in \eqref{eq:sn} uses $\min([-L]^+, [U]^+)$, to directly measures the distance to stability and has a piecewise-linear structure. On the other hand, the RS Loss in \eqref{eq:rs-loss}, denoted here as $\mathcal{R}_{\mathrm{SN2}}$, uses a smooth surrogate for sign agreement, which does not account for distance to stability.
Moreover, in our setting $\mathcal{R}_{\mathrm{SN}}$ and $\mathcal{R}_{\mathrm{SN2}}$ can be combined with other regularizers to target complementary aspects of MILP difficulty.

\paragraph{Interpretation.}
The regularizers $\mathcal{R}_{\mathrm{SN}}$ and $\mathcal{R}_{\mathrm{BW}}$ target
different mechanisms of MILP.
The former $\mathcal{R}_{\mathrm{BW}}$ shrinks all bound widths uniformly, improving the strength of the continuous LP relaxation.
On the other hand, $\mathcal{R}_{\mathrm{SN}}$ concentrates its effect on the boundary
at zero, aiming to eliminate (fix) binary variables from the formulation entirely.
A trained NN could have tight bounds (small $\Delta^{(\ell)}_j$) that still
straddle zero on many neurons, or wide bounds that happen to be one-sided (i.e., stable neurons).
The two regularizers address related and complementary aspects of MILP difficulty and can be combined.

\section{Relaxation-informed regularization}
\label{sec:lp}

The regularization methods introduced in Section~\ref{sec:regularizers} all may help improve the strength of MILP reformulations of a NN surrogate model, albeit indirectly. In other words, they are heuristics aimed at producing tighter bounds or reducing the number of binary variables. Figure~\ref{fig:regularizers} illustrates the relaxation-related properties targeted by each regularizer. 
In this section, we consider a direct measure of relaxation quality: the LP relaxation gap itself.

For a given input $x_i \in \mathcal{X}$, the true network output is
$f_\theta(x_i)$, which is uniquely determined by the forward pass~\eqref{eq:preact}--\eqref{eq:relu}. 
This unique solution is exactly encoded (for fixed input $x_i$) by the MILP constraints~\eqref{eq:bigm} when integrality is enforced. 
The LP relaxation, however, admits different output values because the relaxed binary variables $\al_j \in [0,1]$ allow intermediate neurons to deviate from their true ReLU outputs.

\subsection{Pointwise LP relaxation gap}
As mentioned above, the continuous relaxation of~\eqref{eq:bigm} obtained by relaxing integral constraints yields an LP that effectively provides a bound on the MILP optimum. 
We now derive a regularizer using the LP relaxation gap, i.e., the difference between the LP bound and the MILP solution (which gives the true NN output $f_\theta$). 
For a fixed NN input $x_i$, we denote the LP relaxation value:
\begin{equation}\label{eq:lp-relax}
\begin{aligned}
  V^{\min}(\theta, x_i) \;=\;
  \min_{\zl,\,\xl,\,\al} \quad
    & z^{(L+1)} \\
  \text{s.t.} \quad
    & \eqref{eq:preact},\;\eqref{eq:bigm}, \\
    & \al_j \in [0,1], \\
    & \hat{x}^{(0)} = x_i.
\end{aligned}
\end{equation}

\begin{figure}[t]
\centering
    \includegraphics[width=0.8\textwidth]{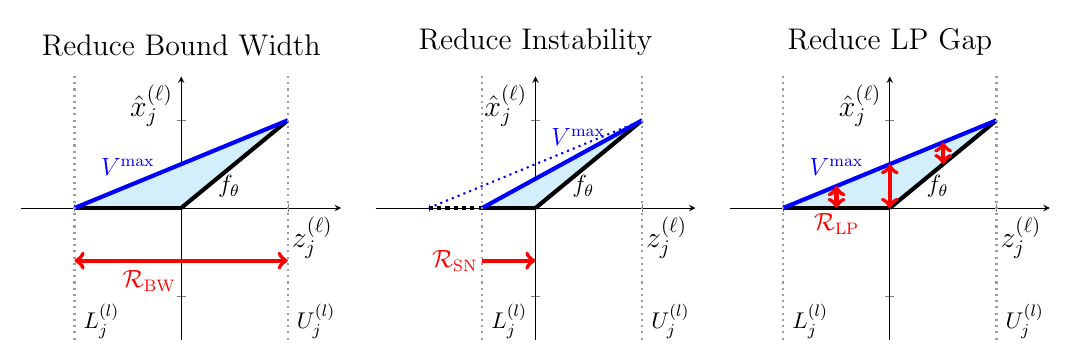}
    \caption{Conceptual depiction of the various goals of MILP-related regularization.}
    \label{fig:regularizers}
\end{figure}

The \emph{pointwise LP gap} in the minimization direction is:
\begin{equation}\label{eq:lp-gap-sample}
  \delta^{\min}(\theta, x_i)
  \;=\;
  f_\theta(x_i) - V^{\min}(\theta, x_i)
  \;\ge\; 0,
\end{equation}
since the LP relaxation can only under-estimate the minimum, i.e.,
$V^{\min} \le f_\theta(x_i)$. 
An analogous quantity $\delta^{\max}(\theta,x_i) = V^{\max}(\theta, x_i) - f_\theta(x_i) \ge 0$
measures the gap in the maximization direction.
The LP gap regularizer penalizes the average gap over a mini-batch~$B$:
\begin{equation}\label{eq:lp-gap-reg}
  \mathcal{R}_{\mathrm{LP}}(\theta)
  \;=\;
  \frac{1}{|B_s|}\sum_{i \in B_s} \delta(\theta,x_i),
\end{equation}
where $B_s \subseteq B$ is an optional subsample of the training mini-batch to limit the number of LP solves per training step. 
In practice, we find that even $|B_s|=1$ can achieve the desired effect. 
We use $\delta_i$ to denote $\delta^{\min}_i$, $\delta^{\max}_i$, or their
sum (total LP gap), depending on the optimization context.
For example, in surrogate-based problems where the NN output must be minimized,  penalizing $\delta^{\min}_i$ is the natural choice, as it targets the gap relevant to the downstream MILP objective.

We note that in some settings surrogate models can have multiple output neurons, e.g., classification models or quantile neural networks~\cite{alcantara2025quantile,ghilardi2025integrated}, complicating the definition of the LP relaxation value~\eqref{eq:lp-relax}. For these models we propose quantifying the LP relaxation gap for a surrogate objective by projecting the vector of outputs $z^{(L+1)}$ onto a random vector, analogous to stochastic Sobolev training~\cite{tsay2021sobolev}. Following this approach, the objective function for~\eqref{eq:lp-relax} is replaced by $\omega^\top z^{(L+1)}$, where $\omega$ is a normalized randomly sampled vector. 
Averaging over many mini-batches would naturally encourage tightening in all possible output directions. 

\paragraph{Interpretation}
$\mathcal{R}_{\mathrm{LP}}$ measures relaxation looseness at individual
  training points $x_i$, while the global LP gap
  (minimizing/maximizing over all $x \in \mathcal{X}$) measures the
  worst-case looseness over the domain $\mathcal{X}$.
  Including pointwise estimates at many training points is expected to generally
  tighten the relaxation over regions of the search space, e.g., sub-domains of a branch-and-bound search. Intuitively, the global relaxation may be tightened as well, though this is not guaranteed.

\subsection{Differentiating through the LP solution}
\label{sec:envelope}

Computing the gradient
$\partial \mathcal{R}_{\mathrm{LP}} / \partial \theta$ requires
differentiating the solution to the LP~\eqref{eq:lp-relax} with
respect to the network parameters~$\theta$.
The LP is a parametric linear program whose constraint data depend
on~$\theta$. 
Writing this LP~\eqref{eq:lp-relax} in standard form:
\begin{equation}\label{eq:lp-standard}
\begin{aligned}
  V^{\min}(\theta, x_i) \;=\; \min_{y} \quad
    & c^\top y \\[4pt]
  \text{s.t.} \quad
    & A_{\mathrm{eq}}(\theta)\, y = b_{\mathrm{eq}}(\theta, x_i), \\
    & G(\theta) y \le h(\theta), \\
    & y^{\mathrm{lb}} \le y \le y^{\mathrm{ub}},
\end{aligned}
\end{equation}
where $y$ collects all primal variables $(\zl, \xl, \al)$ across NN layers, $c$ is the objective vector (in this case selecting the output neuron), the equality constraints encode the pre-activation definitions~\eqref{eq:preact}, and the inequality constraints encode the big-M constraints~\eqref{eq:bigm} with $\al_j \in [0,1]$.

At the LP solution, let $\nu^* \in \R^{m_{\mathrm{eq}}}$ and $\mu^* \in \R^{m_{\mathrm{ineq}}}$ denote the optimal dual variables for the equality and inequality constraints, respectively, with $\mu^*\geq 0$.

\begin{proposition}[Sensitivity for parametric LP]\label{prop:envelope}
Suppose the LP~\eqref{eq:lp-standard} has a unique, non-degenerate
optimal basis.
Then the optimal value $V^{\min}$ is differentiable with respect
to~$\theta$, and
\begin{equation}\label{eq:envelope}
  \frac{\partial V^{\min}}{\partial \theta}
  \;=\;
  (\nu^*)^\top \frac{\partial b_{\mathrm{eq}}}{\partial \theta}
  \;-\;
  (\nu^*)^\top \frac{\partial A_{\mathrm{eq}}}{\partial \theta}\, y^*
  \;+\;
  (\mu^*)^\top \frac{\partial h}{\partial \theta}
  \;-\;
  (\mu^*)^\top \frac{\partial G}{\partial \theta}\, y^*.
\end{equation}
\end{proposition}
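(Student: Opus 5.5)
The plan is to prove \eqref{eq:envelope} by fixing the optimal basis in a neighborhood of the current $\theta$. Writing the optimal solution through the basis gives $V^{\min}$ explicitly, and differentiating that closed form yields the stated expression. Equivalently, this is the envelope theorem applied to the Lagrangian
\[
\mathcal{L}(y,\nu,\mu;\theta) = c^\top y - \nu^\top\bigl(A_{\mathrm{eq}}(\theta)y - b_{\mathrm{eq}}(\theta,x_i)\bigr) - \mu^\top\bigl(G(\theta)y - h(\theta)\bigr).
\]
Differentiating $\mathcal{L}$ in $\theta$ at a fixed saddle point $(y^*,\nu^*,\mu^*)$ gives exactly the four terms of \eqref{eq:envelope}.

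\textbf{Sign convention.} For \eqref{eq:envelope} to hold literally, the multipliers must be the shadow prices of the right-hand sides, i.e. $\partial V^{\min}/\partial b_{\mathrm{eq}} = \nu^*$ and $\partial V^{\min}/\partial h = \mu^*$. I will state this at the outset, since it fixes the orientation in which $\mu^*$ is reported as nonnegative.

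\textbf{Assumed smoothness.} The data $A_{\mathrm{eq}}, b_{\mathrm{eq}}, G, h$ depend on $\theta$ through the weights and through the IBP bounds $\Ll_j,\Ul_j$ of \eqref{eq:ibp}--\eqref{eq:ibp-post}. These bounds are piecewise affine in $\theta$, so I assume $\theta$ is a point where they are differentiable (almost every $\theta$). The box bounds $y^{\mathrm{lb}},y^{\mathrm{ub}}$ are treated as constant; if they depend on $\theta$, they can be moved into $G,h$.

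\textbf{Basis representation.} Add slacks $s = h - Gy \ge 0$ to bring the LP into equality form. The unique optimal basis $\mathcal{B}$ has basis matrix $B(\theta)$, assembled from columns of $A_{\mathrm{eq}}(\theta)$ and $[G(\theta)\; I]$. The nonbasic variables are fixed at their (constant) bounds, and the basic variables solve
\[
B(\theta) y_{\mathcal{B}} = r(\theta),
\]
where $r(\theta)$ is the right-hand side minus the contribution of the nonbasic variables. The optimal value is then $V^{\min} = c_{\mathcal{B}}^\top y_{\mathcal{B}}$, and the dual vector $\pi^\top = c_{\mathcal{B}}^\top B^{-1}$ stacks $(\nu^*,\mu^*)$.

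\textbf{Main obstacle: local stability of the basis.} I must show that $\mathcal{B}$ remains optimal for all $\theta'$ near $\theta$, which I will argue by continuity.
\begin{itemize}
\item The map $\theta\mapsto B(\theta)^{-1}$ is continuous, since $B(\theta)$ is invertible and its entries are continuous in $\theta$.
\item Primal nondegeneracy means every basic variable lies strictly inside its bounds and every basic slack is strictly positive. By continuity of $B^{-1}r$, these strict inequalities persist near $\theta$.
\item Uniqueness of the optimum, as interpreted in the hypothesis, means the reduced costs of all nonbasic variables are strictly of the optimal sign. By continuity of $c_N - N^\top B^{-\top}c_{\mathcal{B}}$, these also persist.
\end{itemize}
Hence primal and dual feasibility of $\mathcal{B}$ are maintained locally. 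The formula $V^{\min}(\theta') = c_{\mathcal{B}}^\top B(\theta')^{-1}r(\theta')$ therefore holds on a neighborhood of $\theta$, and it is differentiable there as a composition of differentiable maps.

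\textbf{Differentiation.} Differentiate $B y_{\mathcal{B}} = r$ to get
\[
\partial y_{\mathcal{B}} = B^{-1}\bigl(\partial r - (\partial B)\, y_{\mathcal{B}}\bigr).
\]
Multiplying by $c_{\mathcal{B}}^\top$ gives
\[
\partial V^{\min} = \pi^\top\bigl(\partial r - (\partial B)\, y_{\mathcal{B}}\bigr).
\]
Now expand $\partial r - (\partial B)y_{\mathcal{B}}$ in the original data. The nonbasic terms $-(\partial N)y_N$ combine with $-(\partial B)y_{\mathcal{B}}$ into the full products $-(\partial A_{\mathrm{eq}})y^*$ and $-(\partial G)y^*$, and the slack columns $I$ have zero derivative. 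Splitting $\pi$ into its equality block $\nu^*$ and inequality block $\mu^*$ then recovers exactly \eqref{eq:envelope}.
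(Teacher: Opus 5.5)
Your proof is correct, and it reaches \eqref{eq:envelope} by a more elementary and more complete route than the paper. The paper differentiates the KKT system \eqref{eq:kkt} via the implicit function theorem and simply \emph{assumes} that the active set is locally constant. You instead fix the basis, write $V^{\min}(\theta') = c_{\mathcal{B}}^\top B(\theta')^{-1} r(\theta')$, and \emph{prove} this representation holds near $\theta$, using primal nondegeneracy, strictly signed reduced costs, and continuity. The closing algebra is the same in both arguments, since $c_{\mathcal{B}}^\top B^{-1} = \pi^\top$ is just stationarity restricted to the basic columns. What you gain is rigor about local stability of the basis. The KKT route's advantage is that it carries over to smooth convex programs, which the paper lists as future work.

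Your reading of the hypothesis as strict dual feasibility is essential, not cosmetic. Uniqueness of the optimal \emph{solution} plus primal nondegeneracy does imply it: a zero reduced cost would let you move a nonbasic variable off its bound by a small step at no cost, giving a second optimum. Uniqueness of the optimal \emph{basis} alone does not. For example, $\min y_1$ s.t.\ $y_1 - \theta y_2 = 1$, $y \ge 0$ has the unique nondegenerate optimal basis $\{y_1\}$ at $\theta = 0$, yet $V^{\min}$ jumps from $1$ to $0$ when $\theta$ becomes negative.

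Your explicit shadow-price convention is also exactly what makes \eqref{eq:envelope} literally true. With the paper's Lagrangian and $\mu^* \ge 0$, stationarity gives $c^\top = -(\nu^*)^\top A_{\mathrm{eq}} - (\mu^*)^\top G$. So \eqref{eq:proof-step2} drops a minus sign, and the paper's own multipliers yield the negative of \eqref{eq:envelope}.

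Be careful with your remark about nonnegativity, though. Under your convention and the $Gy \le h$ orientation of \eqref{eq:lp-standard}, $\mu^* = \partial V^{\min}/\partial h \le 0$, because raising $h$ relaxes a constraint of a minimization. Nonnegative multipliers would require writing those rows as $Gy \ge h$.

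A minor point: the IBP bounds are piecewise polynomial in $\theta$ (they contain products of weights across layers), not piecewise affine. They are still differentiable almost everywhere, so your smoothness assumption is unaffected.
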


\begin{proof}
To obtain these derivatives, we follow the approach of \cite{amos2017optnet} and differentiate the KKT conditions. A similar analysis is also provided in~~\citet[Chapter 3.4]{fiacco1983}. In particular, the Lagrangian of~\eqref{eq:lp-standard} is given by:
\begin{equation}
L = c^\top y + \nu^\top\left( A_{\mathrm{eq}}(\theta)\, y - b_{\mathrm{eq}}(\theta, x_i)\right) + \mu^\top\left(G(\theta) y - h(\theta)\right).
\end{equation}
The KKT conditions for stationarity, primal feasibility, and complementary slackness are:
\begin{equation}\label{eq:kkt}
    \begin{aligned}
        c + A_{\mathrm{eq}}(\theta)^\top \nu^* + G(\theta)^\top \mu^* &= 0,\\
        A_{\mathrm{eq}}(\theta)\, y^* - b_{\mathrm{eq}}(\theta, x_i) &= 0,\\
        D(\mu^*) \left(G(\theta) y^* - h(\theta)\right) &= 0,
    \end{aligned}
\end{equation}
where the $D(\cdot)$ operator forms a diagonal matrix from a vector. 
To obtain a derivative, we assume (or approximate) the active-constraint set is locally constant, i.e., at a non-degenerate optimal basis, so the solution $[y^*(\theta), \nu^*(\theta), \mu^*(\theta)]$ is a smooth function of~$\theta$ by the implicit function theorem applied to~\eqref{eq:kkt}. We refer the reader to~\citet[Chapter 2.4]{fiacco1983} for an overview of relevant implicit function theorem results.  
Since the objective vector $c$ is fixed, we can first differentiate the objective $V^{\min} = c^\top y^*$, giving
\begin{equation}\label{eq:proof-step1}
  \frac{\partial V^{\min}}{\partial \theta}
  \;=\; c^\top \frac{\partial y^*}{\partial \theta}.
\end{equation}
We then substitute the stationarity condition from \eqref{eq:kkt}, giving
\begin{equation}\label{eq:proof-step2}
  \frac{\partial V^{\min}}{\partial \theta}
  \;=\; (\nu^*)^\top A_{\mathrm{eq}}\,\frac{\partial y^*}{\partial \theta}
  \;+\; (\mu^*)^\top G\,\frac{\partial y^*}{\partial \theta}.
\end{equation}
Now, differentiating the primal feasibility conditions,
$A_{\mathrm{eq}}(\theta)\,y^*(\theta) = b_{\mathrm{eq}}(\theta, x_i)$, gives:
\begin{equation}\label{eq:proof-eq}
  A_{\mathrm{eq}}\,\frac{\partial y^*}{\partial \theta} \;+\; \frac{\partial A_{\mathrm{eq}}}{\partial \theta}\,y^*
  \;=\; \frac{\partial b_{\mathrm{eq}}}{\partial \theta}.
\end{equation}
For the inequality constraints, complementary slackness \eqref{eq:kkt} gives $\mu^*_i > 0$ only when $G_i(\theta)\,y^* = h_i(\theta)$, i.e., the $i$-th constraint is active. 
Differentiating the active inequality constraints (noting that $\mu^*_i = 0$ for inactive constraints) therefore gives:
\begin{equation}\label{eq:proof-ineq}
  (\mu^*)^\top G\,\frac{\partial y^*}{\partial \theta} \;+\; (\mu^*)^\top\frac{\partial G}{\partial \theta}\,y^*
  \;=\; (\mu^*)^\top\frac{\partial h}{\partial \theta}.
\end{equation}
Substituting~\eqref{eq:proof-eq} and \eqref{eq:proof-ineq}
into~\eqref{eq:proof-step2} to eliminate $A_{eq}\cdot \partial y^*/\partial\theta$ and $G \cdot\partial y^*/\partial\theta$ yields~\eqref{eq:envelope}.
\end{proof}

The result applies the familiar LP shadow-price interpretation (the rate of change of the optimum with respect to the right-hand side equals the dual variable) to perturbations in both the constraint matrix and the inequality right-hand side. 
In other words, the sensitivity of the optimal value to perturbations in constraint data can be computed from the optimal dual variables, without requiring differentiation through the \texttt{min} operator itself. 
The dual variables $\nu^*$ and $\mu^*$ are a standard output of LP solvers (e.g., as shadow prices from HiGHS or Gurobi). 
We refer the reader to~\cite[Chapter 5]{bertsimas1997introduction} for a more comprehensive treatment of parametric LPs and global LP sensitivity analysis. 

While Proposition~\ref{prop:envelope} gives a simple avenue to obtain sensitivities for simple, LP-based relaxations, more complicated formulations may also be used to produce bounds, e.g., convex NLP relaxations. 
Note that recent works~\cite{schweidtmann2019deterministic,wilhelm2023convex} study relaxations for nonlinear activation functions, another direction for future generalization beyond LP relaxations. 
The proposed regularizer may be generalized to these settings, e.g., following approaches to differentiate through nonlinear programs~\cite{agrawal2019differentiable,amos2017optnet,pineda2022theseus}. 
Moreover, traditional envelope theorems describe conditions for the value of a parameterized (nonlinear) optimization problem to be differentiable in the parameter and provide formulas for their derivatives~\cite{fiacco1983,milgrom2002}.
We note there is also a growing literature on software frameworks for differentiable optimization~\cite{besanccon2024flexible,rosemberg2025general}. 

\begin{remark}[Envelope theorem versus KKT differentiation]
\label{rem:kkt} 
An alternative route to $\partial V^{\min}/\partial\theta$ is to
differentiate the KKT stationarity conditions implicitly.
At the optimal basis, differentiating the stationarity conditions~\eqref{eq:kkt} with respect to~$\theta$ yields a linear system involving the Jacobian $\partial y^*/\partial\theta$ of the
optimal primal solution.
Computing this Jacobian requires solving an $n_y \times |\theta|$ system at every training step, where $n_y$ is the number of primal LP variables (scaling with network width and depth), and $|\theta|$ is the number of network parameters.
The proposed formulation avoids this scaling entirely.
Because $\partial \mathcal{L}/\partial y = 0$ at optimality (primal stationarity), the terms involving $\partial y^*/\partial\theta$
cancel in the total derivative, and the gradient collapses to the
dual-weighted expression~\eqref{eq:envelope}.
In practice, we require only the dual variables~$\nu^*$ and~$\mu^*$, which are already a standard solver output, (with no additional linear system to solve).
\end{remark}

\subsection{Application to the big-M LP}
\label{sec:channels}

We observe that the LP constraints depend on the value of~$\theta$ through two channels, as written in~\eqref{eq:lp-standard}:
\begin{enumerate}
  \item \textbf{Equality constraints (direct):}
    The pre-activation definitions $\zl = \wl \xl[\ell-1] + \bl$ contribute $\bl$ to the
    right-hand side~$b_{\mathrm{eq}}$ and $\wl$ to the constraint
    matrix~$A_{\mathrm{eq}}$.
  \item \textbf{Inequality constraints (indirect):}
    The big-M values $\Ll_j, \Ul_j$ appearing in~\eqref{eq:bigm-ub1}--\eqref{eq:bigm-ub2} contribute to both $G$ and $h$, and their values depend on~$\theta$ indirectly. For example, the bounds may be computed through IBP recursion~\eqref{eq:ibp}.
\end{enumerate}

Noting the similarity of channel (2) to the bound widths discussed in Section~\ref{sec:regularizers}, in our implementation, we treat the big-M values $\Ll_j, \Ul_j$ as constants when differentiating through the LP, retaining only channel~(1). 
This simplification is further motivated in Section~\ref{sec:combined}, where we explicitly show that the omitted big-M sensitivity can be recovered by the bound-width regularizer when the two are combined. 

Following this simplification, $\Ll_j, \Ul_j$ are treated as constants during differentiation, and the inequality constraint data $G$ and $h$ are independent of~$\theta$. The sensitivity~\eqref{eq:envelope} therefore reduces to:
\begin{equation}\label{eq:envelope-eq-only}
  \frac{\partial V^{\min}}{\partial \theta}\bigg|_{L,U}
  \;=\;
  (\nu^*)^\top
  \frac{\partial b_{\mathrm{eq}}}{\partial \theta}
  \;-\;
  (\nu^*)^\top
  \frac{\partial A_{\mathrm{eq}}}{\partial \theta}\, y^*.
\end{equation}

The equality constraints can be grouped by layer for ease of notation.
At layer~$\ell$, the constraint 
$\zl_j - \wl_j \xl[\ell-1] = \bl_j$
has dual variable $\nu^{(\ell)}_j$.
Differentiating and plugging into \eqref{eq:envelope-eq-only} gives:
\begin{align}
  \frac{\partial V^{\min}}{\partial \bl_j}\bigg|_{L,U}
    &= \nu^{(\ell)}_j,
  \label{eq:grad-b} \\[6pt]
  \frac{\partial V^{\min}}{\partial W^{(\ell)}_{j,k}}\bigg|_{L,U}
    &= \nu^{(\ell)}_j \cdot \hat{x}^{(\ell-1)*}_k,
  \label{eq:grad-W}
\end{align}
where $\hat{x}^{(\ell-1)*}_k$ is the LP primal value of the
post-activation variable at layer~$\ell{-}1$
(for $\ell = 1$, these correspond to elements of the fixed input component $x_i$).

The gradient of the per-sample LP gap~\eqref{eq:lp-gap-sample} is
therefore approximated as:
\begin{equation}\label{eq:gap-grad}
  \frac{\partial \delta^{\min}_i}{\partial \theta}
  \;=\;
  \frac{\partial f_\theta(x_i)}{\partial \theta}
  \;-\;
  \frac{\partial V^{\min}}{\partial \theta}\bigg|_{L,U},
\end{equation}
where the first term is the standard backpropagation gradient.

\subsection{Proxy implementation}\label{sec:straight-through}

Rather than implementing a custom backward pass for optimization problems as in~\cite{amos2017optnet}, we construct a differentiable proxy tensor that has a gradient matching~\eqref{eq:grad-b}--\eqref{eq:grad-W}:
\begin{equation}\label{eq:proxy}
  P(\theta)
  \;=\;
  \sum_{\ell=1}^{L+1}
    \bigl(\nu^{(\ell)}\bigr)^\top
    \bigl(\wl \,\hat{x}^{(\ell-1)*} + \bl\bigr),
\end{equation}
where $\nu^{(\ell)}$ and $\hat{x}^{(\ell-1)*}$ are treated as fixed constants (detached from the computation graph) obtained from the LP solution, while the network parameters $\wl$ and $\bl$ remain in the computation graph. 
Observe that, by construction, the derivatives $\partial P / \partial \theta$
reproduce~\eqref{eq:grad-b}--\eqref{eq:grad-W} exactly.
Nevertheless, the forward-pass value of $P(\theta)$ does not match the true LP value $V^{\min}$, which we would like to include in the regularizer \eqref{eq:lp-gap-reg}. 
Therefore, we apply the idea of a `straight-through
estimator,' i.e., a proxy derivative that is used in the backward pass only~\cite{bengio2013estimating,yin2019understanding}:
\begin{equation}\label{eq:st}
  \widetilde{V}^{\min}(\theta)
  \;=\;
  P(\theta) - \operatorname{sg}\bigl[P(\theta)\bigr] + V^{\min},
\end{equation}
where $\operatorname{sg}[\cdot]$ denotes the \emph{stop-gradient
operator}. 
Specifically, $\operatorname{sg}[u]$ returns the same numerical value
as~$u$, but is treated as a constant during differentiation, i.e., 
$\partial\,\operatorname{sg}[u]/\partial\theta \equiv 0$.
In automatic differentiation frameworks this is implemented by
detaching the tensor from the computation graph (e.g.\
\texttt{u.detach()} in PyTorch).

The two passes of~\eqref{eq:st} behave differently by design.
In the \emph{forward pass}, $P(\theta)$ and
$\operatorname{sg}[P(\theta)]$ evaluate to the same scalar~$p$, so
$\widetilde{V}^{\min} = p - p + V^{\min} = V^{\min}$. In other words, the forward pass returns the desired LP optimal value from the solver.
In the \emph{backward pass}, the stop-gradient removes the second term
and the solver output $V^{\min}$ is a constant (solving the relaxation using an LP solver is not included in the computation graph), giving $\partial \widetilde{V}^{\min}/ \partial\theta = \partial P/\partial\theta - 0 + 0 = \partial P/\partial\theta$. 
In other words, the backward pass returns the desired gradient~\eqref{eq:grad-b}--\eqref{eq:grad-W}.
This proxy implementation avoids having to implement a custom backward pass for the proposed regularizer, while preserving both the correct function value and the correct gradient.

Figure~\ref{fig:relaxations_gaps} illustrates the pointwise LP relaxation gap $\delta^{\min}(\theta, x_i)$ in \eqref{eq:lp-gap-sample} for NN models trained with the various regularizers on a simple benchmark function. We observe that the LP gap regularizer can produce surrogate models with much tighter pointwise relaxations over the function domain.

\begin{figure}
\centering
    \includegraphics[width=0.9\textwidth]{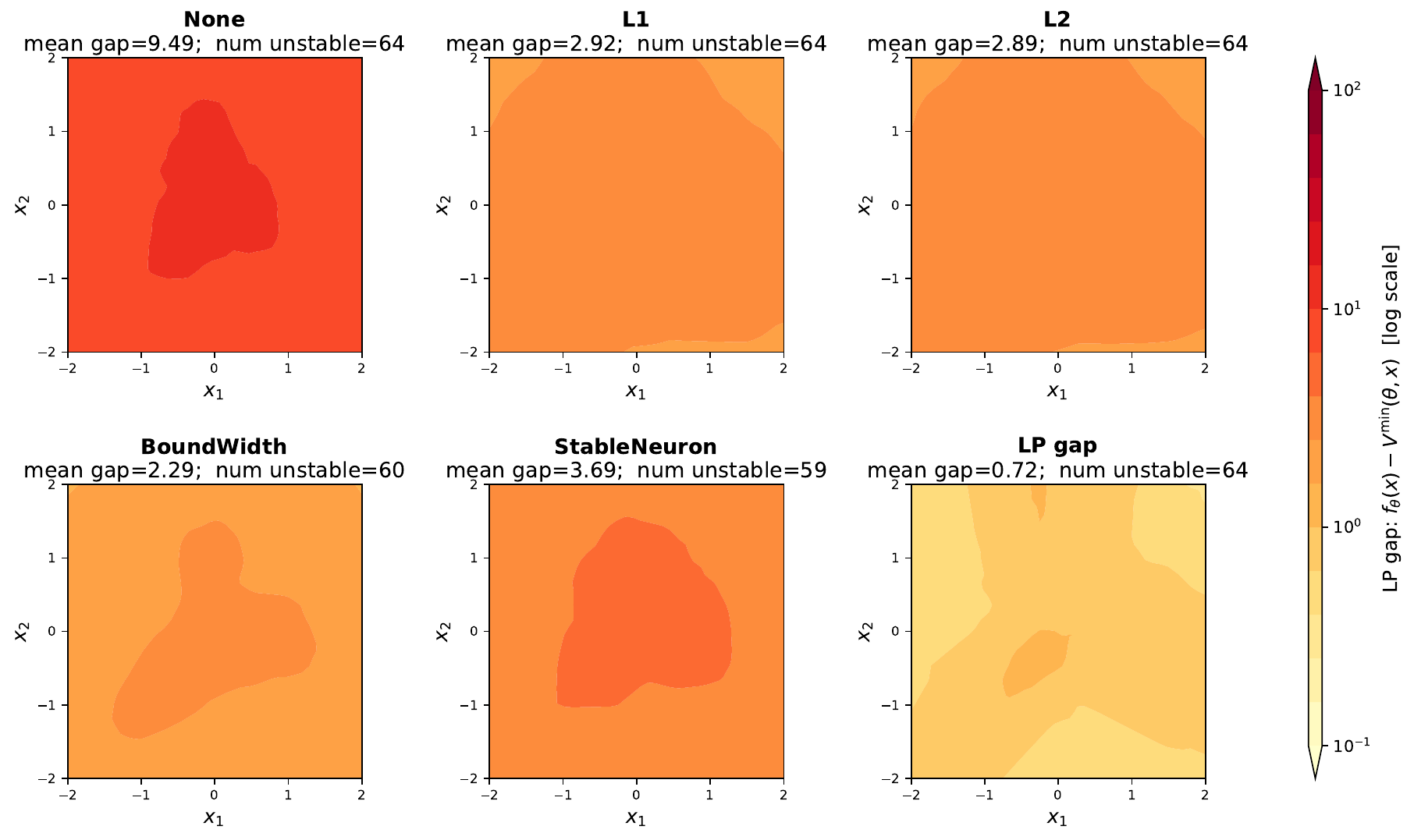}
    \caption{LP gap for NN models with \{32,32\} hidden layers trained on the scaled Peaks function with two inputs ($x_1$,$x_2$), with output $f(x)$. Different regularizers are applied during training, with weights chosen to maintain validation MSE of similar scale.}
    \label{fig:relaxations_gaps}
\end{figure}

\section{Combining regularization strategies}
\label{sec:combined}

Sections~\ref{sec:regularizers} and~\ref{sec:lp} introduce several regularization strategies that target different aspects of MILP difficulty when used downstream as surrogate models (Figure~\ref{fig:regularizers}). 
A summary of the various proposed regularizers is given in Table~\ref{tab:summary}. 
Computational costs for the various regularizers are given in Table~\ref{tab:cost}. 

\begin{table}[h]
\centering
\caption{Summary of MILP- and relaxation-informed regularization strategies.}
\label{tab:summary}
\begin{tabular}{@{}p{2.2cm}p{4.5cm}p{5cm}@{}}
\toprule
Regularizer & What it targets & Gradient w.r.t.\ $\theta$ \\
\midrule
$\mathcal{R}_{\mathrm{BW}}$
  & Bound widths (big-M values) 
  & $dL/d\theta,\; dU/d\theta$ via autodiff \\[4pt]
$\mathcal{R}_{\mathrm{SN}}$
  & Number of binary variables \newline (combinatorial difficulty)
  & $dL/d\theta$ or $dU/d\theta$ via autodiff \newline (for unstable neurons)\\[4pt]
$\mathcal{R}_{\mathrm{LP}}$
  & LP relaxation gap \newline ($L, U$ treated as constants)
  & LP dual variables \\[4pt]
$\mathcal{R}_{\mathrm{BW}} + \mathcal{R}_{\mathrm{LP}}$
  & Both big-M values and LP gap
  & Approximates full $dV^{\max}/d\theta$ \newline (see Proposition~\ref{prop:combined}) \\
\bottomrule
\end{tabular}
\end{table}

\begin{table}[h]
\centering
\caption{Per-step computational cost of each regularizer.}
\label{tab:cost}
\begin{tabular}{@{}lll@{}}
\toprule
regularizer & Extra cost per training step & Gradient source \\
\midrule
$\mathcal{R}_{\mathrm{L1}},\; \mathcal{R}_{\mathrm{L2}}$
  & Negligible
  & Autograd \\
$\mathcal{R}_{\mathrm{BW}}$
  & 1 IBP forward pass
  & Autograd through IBP \\
$\mathcal{R}_{\mathrm{SN}}$, $\mathcal{R}_{\mathrm{SN2}}$
  & 1 IBP forward pass
  & Autograd through IBP \\
$\mathcal{R}_{\mathrm{LP}}$
  & $|B_s|$ LP solves & LP duals + autograd \\
$\mathcal{R}_{\mathrm{BW}} + \mathcal{R}_{\mathrm{LP}}$
  & 1 IBP pass + $|B_s|$ LP solves
  & Both channels of~\eqref{eq:total-deriv} \\
\bottomrule
\end{tabular}
\end{table}

Here $|B_s|$ is the number of LP samples per batch (a tunable parameter to manage training overhead).
The IBP forward pass has the same cost as a standard network forward pass (one matrix--vector product per layer).
The LP solves are the dominant cost for $\mathcal{R}_{\mathrm{LP}}$; they can be parallelized across samples and potentially accelerated by warm-starting from the previous iterate.

\subsection{Approximation of the total derivative}

As observed in Section~\ref{sec:channels}, the LP optimal value $V^{\max}$ depends on $\theta$ through two channels:
\begin{equation}\label{eq:total-deriv}
  \frac{dV^{\max}}{d\theta}
  \;=\;
  \underbrace{
    \frac{\partial V^{\max}}{\partial \theta}\bigg|_{L,U\,\text{const}}
  }_{\text{direct: constraint RHS}}
  \;+\;
  \underbrace{
    \sum_{(\ell,j)}
    \left(
      \frac{\partial V^{\max}}{\partial \Ll_j}
      \frac{d\Ll_j}{d\theta}
      +
      \frac{\partial V^{\max}}{\partial \Ul_j}
      \frac{d\Ul_j}{d\theta}
    \right)
  }_{\text{indirect: big-M values}}.
\end{equation}

The first term is what $\mathcal{R}_{\mathrm{LP}}$ effectively computes via~\eqref{eq:envelope-eq-only}, as the bound widths (big-M values) are assumed constant.
The second (omitted) term captures how changing $\theta$ alters the big-M constants $\Ll_j, \Ul_j$, which in turn affect the LP feasible region and hence the tightness of the LP relaxation.
This second term factors as:
\begin{itemize}
  \item $\partial V^{\max} / \partial \Ll_j$ and
    $\partial V^{\max} / \partial \Ul_j$:
    the sensitivity of the LP value to the big-M constants,
    given by the dual variables $\mu^*$ of the inequality
    constraints~\eqref{eq:bigm-ub1}--\eqref{eq:bigm-ub2};
  \item $d\Ll_j / d\theta$ and $d\Ul_j / d\theta$:
    the gradients of the bound w.r.t. $\theta$, e.g., obtained using IBP.
\end{itemize}

The bound-width regularizer $\mathcal{R}_{\mathrm{BW}}$ penalizes
$\sum_{(\ell,j)} (\Ul_j - \Ll_j)$, whose gradient is:
\begin{equation}\label{eq:bw-grad}
  \frac{\partial \mathcal{R}_{\mathrm{BW}}}{\partial \theta}
  \;=\;
  \sum_{(\ell,j)}
  \left(
    \frac{d\Ul_j}{d\theta} - \frac{d\Ll_j}{d\theta}
  \right).
\end{equation}
Comparing with the second (indirect) term in~\eqref{eq:total-deriv}, we
see that $\mathcal{R}_{\mathrm{BW}}$ effectively provides a
\emph{surrogate} for the indirect big-M sensitivity path, with the LP dual
multipliers
$\partial V^{\max}/\partial \Ll_j$ and
$\partial V^{\max}/\partial \Ul_j$
replaced by the uniform weights $-1$ and $+1$, respectively.

\begin{proposition}[Combining $\mathcal{R}_{\mathrm{BW}}$ and $\mathcal{R}_{\mathrm{LP}}$ regularizers approximates the full gradient]
\label{prop:combined}
The combined regularizer
$\mathcal{R}_{\mathrm{LP}} + \alpha\,\mathcal{R}_{\mathrm{BW}}$
produces the gradient:
\begin{equation}\label{eq:combined-grad}
  \frac{\partial \mathcal{R}_{\mathrm{LP}}}{\partial \theta}
  + \alpha \frac{\partial \mathcal{R}_{\mathrm{BW}}}{\partial \theta}
  \;=\;
  \underbrace{
    \frac{\partial V^{\max}}{\partial \theta}\bigg|_{L,U\,\text{const}}
  }_{\text{from } \mathcal{R}_{\mathrm{LP}}}
  \;+\;
  \alpha
  \underbrace{
    \sum_{(\ell,j)}
    \left(
      \frac{d\Ul_j}{d\theta} - \frac{d\Ll_j}{d\theta}
    \right)
  }_{\text{from } \mathcal{R}_{\mathrm{BW}}},
\end{equation}
which approximates the total derivative~\eqref{eq:total-deriv} with the
LP dual weights $\partial V^{\max}/\partial L_j^{(\ell)}$ and
$\partial V^{\max}/\partial U_j^{(\ell)}$ replaced by $\alpha$ and
$-\alpha$.
\end{proposition}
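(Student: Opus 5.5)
The argument is by linearity of differentiation, applied term by term to the combined objective, followed by a direct comparison with the two-channel decomposition~\eqref{eq:total-deriv}. I will work in the maximization direction, $\delta_i = \delta^{\max}_i = V^{\max}(\theta,x_i) - f_\theta(x_i)$, to match the notation of~\eqref{eq:total-deriv}. The $\delta^{\min}$ case only flips signs.

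\textbf{Step 1: gradient of $\mathcal{R}_{\mathrm{LP}}$.} I differentiate~\eqref{eq:lp-gap-reg} sample by sample. Under the straight-through construction~\eqref{eq:st}, the LP value contributes only $\partial P/\partial\theta$. By construction, $\partial P/\partial\theta$ reproduces~\eqref{eq:grad-b}--\eqref{eq:grad-W}, i.e., the equality-only sensitivity~\eqref{eq:envelope-eq-only}. This follows from Proposition~\ref{prop:envelope} with $G,h$ held fixed, since $\Ll_j,\Ul_j$ are treated as constants.

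The backpropagation term $\partial f_\theta(x_i)/\partial\theta$ should then be kept separate from the LP channel. It is either absorbed by reading the underbraced quantity as the gap's direct channel, or it is the identical term that appears in the total derivative of $\delta$ as well. Either way it plays no role in the comparison. Averaging over $B_s$ gives the first underbraced term of~\eqref{eq:combined-grad}.

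\textbf{Step 2: gradient of $\alpha\mathcal{R}_{\mathrm{BW}}$.} This is immediate from~\eqref{eq:bw-grad}. I absorb the normalization $1/\sum n_\ell$ into $\alpha$, and take $d\Ul_j/d\theta$ and $d\Ll_j/d\theta$ as the (sub)gradients through the IBP recursion~\eqref{eq:ibp}--\eqref{eq:ibp-post}. Summing Steps 1 and 2 yields the identity~\eqref{eq:combined-grad} exactly.

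\textbf{Step 3: the approximation claim.} I subtract~\eqref{eq:combined-grad} from~\eqref{eq:total-deriv}. The direct terms cancel, which leaves
\[
\sum_{(\ell,j)}\Bigl[\Bigl(\tfrac{\partial V^{\max}}{\partial \Ll_j}+\alpha\Bigr)\tfrac{d\Ll_j}{d\theta}+\Bigl(\tfrac{\partial V^{\max}}{\partial \Ul_j}-\alpha\Bigr)\tfrac{d\Ul_j}{d\theta}\Bigr].
\]
So the combined gradient is exactly the total derivative with the multipliers $\partial V^{\max}/\partial\Ll_j$ and $\partial V^{\max}/\partial\Ul_j$ replaced by constants.

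I then need to check the signs using Proposition~\ref{prop:envelope} applied to the inequality rows~\eqref{eq:bigm-ub1}--\eqref{eq:bigm-ub2}. In the maximization LP, enlarging $\Ul_j$ or decreasing $\Ll_j$ relaxes these rows and can only increase $V^{\max}$. Hence $\partial V^{\max}/\partial\Ul_j\ge 0$ and $\partial V^{\max}/\partial\Ll_j\le 0$, which is consistent with weights $+\alpha$ on $U$ and $-\alpha$ on $L$. These coefficients are $\mu^*_i$ times the $\Ul_j$ or $\Ll_j$ coefficient in the corresponding row, read off from the $\partial h/\partial\theta$ and $\partial G/\partial\theta$ terms of~\eqref{eq:envelope}.

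\textbf{Expected obstacle.} The main difficulty is the direction and sign bookkeeping. The statement as typeset assigns $\alpha$ to $L$ and $-\alpha$ to $U$, whereas~\eqref{eq:bw-grad} gives $+$ on $U$ and $-$ on $L$. I would reconcile this by stating the correspondence explicitly: the $U$-weight is $+\alpha$ and the $L$-weight is $-\alpha$, with signs reversed in the minimization direction.

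I would also note that the approximation is exact only where the dual weights happen to equal $\pm\alpha$ uniformly, for example on active unstable neurons with equal multipliers. Otherwise the statement is a qualitative, sign-consistent surrogate rather than a quantitative bound. This is justified because $\mu^*\ge0$ and complementary slackness make the true weights zero on inactive big-M rows.

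Finally, differentiability holds only almost everywhere. I would rely on the non-degeneracy hypothesis of Proposition~\ref{prop:envelope} for the LP channel and on the a.e. subdifferentiability of IBP from Section~\ref{sec:ibp} for the bound channel.
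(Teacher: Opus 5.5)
Your proposal is correct and follows the paper's own route: linearity of the gradient, identifying the $\mathcal{R}_{\mathrm{LP}}$ gradient with the equality-only channel \eqref{eq:envelope-eq-only}, and comparing \eqref{eq:bw-grad} term by term with the indirect part of \eqref{eq:total-deriv}. You are also right that the weights in the statement are swapped relative to \eqref{eq:combined-grad} and to the paper's preceding text, which assigns $-1$ to $\partial V^{\max}/\partial \Ll_j$ and $+1$ to $\partial V^{\max}/\partial \Ul_j$; your handling of the $\partial f_\theta/\partial\theta$ term and your monotonicity-based sign check are sound details that the paper leaves implicit.
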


\begin{remark}[Why not differentiate through the big-M values directly?]
  Computing the exact second term in~\eqref{eq:total-deriv} would require the LP dual variables $\mu^*$ for the inequality constraints as well as the full IBP Jacobian $dL/d\theta, dU/d\theta$. 
  While feasible in principle, this doubles the information needed from each LP solve and couples the LP backward pass to the IBP backward pass.
  The combined $\mathcal{R}_{\mathrm{LP}} + \alpha\,\mathcal{R}_{\mathrm{BW}}$ avoids this coupling while still capturing both sensitivity paths, with $\alpha$ serving as a tunable proxy for the (unknown, sample-dependent) LP dual weights.
  The scalar $\alpha$ can be interpreted as a uniform ``importance weight'' for big-M tightness relative to constraint-RHS sensitivity.
\end{remark}

\section{Computational Results}
\label{sec:results}
To evaluate the regularization techniques proposed in Sections~\ref{sec:regularizers}--\ref{sec:lp}, we first consider the experimental settings of~\citet{plate2026analysis} and train NNs as surrogates for standard non-convex benchmark functions. We furthermore study quantile NNs as surrogates in stochastic programming applications, following~\citet{alcantara2025quantile}. We compare training and MILP performance on downstream optimization problems with different (combinations of) regularizers added during training.  

\subsection{Implementation}
All experiments were run on a server equipped with AMD EPYC 7742 64-Core Processors. Each training and optimization run was allocated 8 CPU cores and 16 GB of memory.  
NN surrogate models and regularizers were implemented using PyTorch~\cite{paszke2019pytorch}, and MILP optimization problems were solved using Gurobi v13.0.1~\cite{gurobi}. 
The LPs for the relaxation-based regularizer are implemented using \texttt{scipy.optimize} and solved using HiGHS~\cite{huangfu2018parallelizing}.
The author acknowledges the use of Anthropic's Claude (v4.6 models) to assist with setting-up the server experimental environments. The content was reviewed by the author, who takes full responsibility for the final manuscript.

Although Gurobi can solve LPs, we use a HiGHS implementation for two reasons: first, it avoids the per-call overhead of constructing Gurobi model objects inside each training batch, which dominates runtime for the relaxed LPs encountered; and second, it keeps the entire training pipeline within open-source Python dependencies following the convention of machine learning software. 
The LP instances encountered during training consist of one LP per regularized sample, with the number of variables and constraints growing linearly in the total number of neurons. 
We found that HiGHS solves each these LP in milliseconds, but the cost accumulates over many mini-batches, which is reflected in the computational costs reported in Table~\ref{tab:train_time_ratios}. 
While our experiments are limited to CPU servers, an interesting direction for future work is to exploit GPU-based LP solvers~\cite{applegate2021practical,applegate2023faster} during training, which could substantially reduce this overhead and integrate more naturally with GPU-based model training pipelines.

\subsection{Direct Optimization over Surrogates}
\begin{figure}
\centering
    \includegraphics[width=0.9\textwidth]{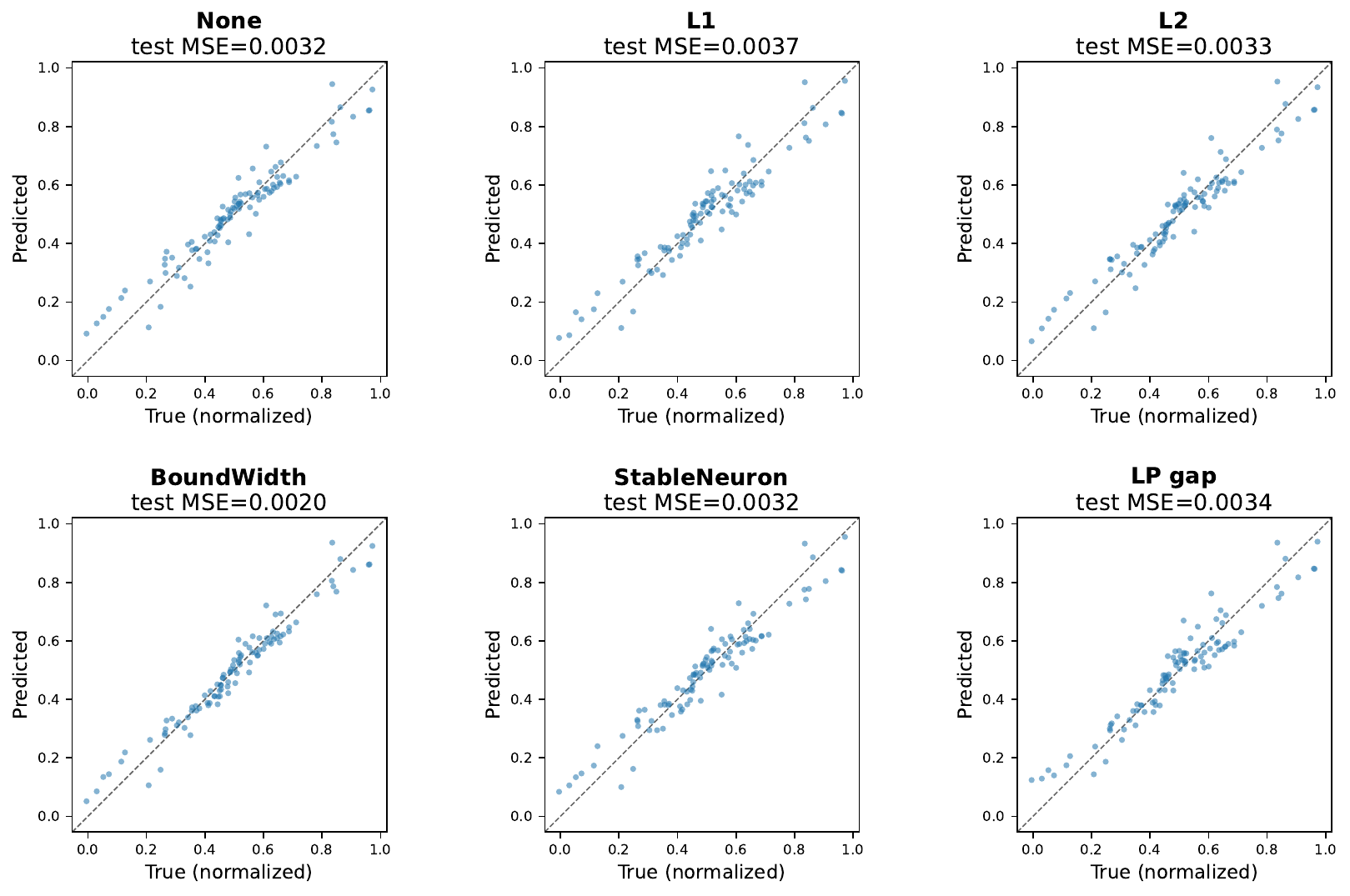}
    \caption{Parity plots for NN models with \{32,32\} hidden layers trained on the scaled Peaks function with two inputs ($x_1$,$x_2$), with output $f(x)$. Different regularizers are applied during training, with weights chosen to maintain validation MSE of similar scale.}
    \label{fig:relaxations_parity}
\end{figure}

\subsubsection{Benchmark Functions}
We first study the direct minimization over a surrogate model output, i.e., solving the straightforward problem to minimize $f_\theta(x)$. 
We consider the benchmark functions and training settings used by~\citet{plate2026analysis} to facilitate comparison:
\begin{enumerate}
    \item The Himmelblau function $f_\mathrm{himmelblau}:[-5,5]^2\rightarrow\R$, denoted as \texttt{himmelblau}, which has a global minimum of $0$ at four points:
    \[
    f_\mathrm{himmelblau}(x) = (x_1^2+x_2-11)^2 + (x_1+x_2^2-7)^2
    \]
    \item The Peaks function $f_\mathrm{peaks}:[-2,2]^2\rightarrow \R$, denoted as \texttt{peaks}, which is multimodal with a unique global minimum of $-6.551$~\citep{plate2026analysis}:
    \begin{multline*}
    f_\mathrm{peaks}(x) = -3(1-x_1)^2 \exp\left(-x_1^2-(x_2-1)^2\right) - \\10(\frac{x_1}{5}-x_1^3 - x_2^5)\exp(-x_1^2 - x_2^2) - \frac{1}{3}\exp\left(-(x_1+1)^2-x_2^2\right)
    \end{multline*}
    \item The $d$-dimensional Ackley function $f_\mathrm{ackley}, d : [-3.5,3.5]^d\rightarrow \R$, denoted as \texttt{ackley-d}. The function is multimodal with a unique global minimum of $0$:
    \begin{multline*}
    f_\mathrm{ackley}(x) = -20\, \exp\left(\frac{1}{5}\sqrt{\frac{1}{d} \sum_{i=1}^d x_i^2 }\right) - \exp\left( \frac{1}{d} \sum_{i=1}^d \mathrm{cos}\left(2\pi x_i \right) \right) + \exp(1) + 20
    \end{multline*}
\end{enumerate}

As an illustrative example, we first train feedforward neural networks with two hidden layers of 32 neurons each with the various proposed regularizers, tuning the regularization penalties to ensure a similar validation MSE. The continuous relaxations are shown in Figure~\ref{fig:relaxations_3d}, the pointwise relaxation gaps in Figure~\ref{fig:relaxations_gaps}, and prediction parity plots in Figure~\ref{fig:relaxations_parity}. This simple example allows us to visually verify that the proposed regularizers may be tuned to improve relaxation tightness without significantly worsening prediction accuracy and/or generalization ability. 

For the optimization studies below, we now partially follow the training setting of~\cite{plate2026analysis} and consider feedforward neural networks of with $\{2,3,5\}$ hidden layers of $25$ neurons each. The models are trained on 100,000 samples for \texttt{peaks} and \texttt{himmelblau} and 150,000 samples for \texttt{ackley-2}, with all samples generated using Latin Hypercube sampling. 
We test some larger models including hidden layers of $50$ neurons on the 5-dimensional Ackley function, \texttt{ackley-5}, where the number of samples is doubled to 300,000. 
Data are normalized, and 30\% of the data are used as a test set to measure  generalization ability. Networks are trained for 200 epochs using the Adam optimizer.

\subsubsection{Training Results}
\paragraph{Computational overhead.}
Table~\ref{tab:train_time_ratios} shows training-time ratios relative to the unregularized baseline.
Shrinkage regularizers ($\mathcal{R}_{\mathrm{L1}}$, $\mathcal{R}_{\mathrm{L2}}$) add modest overhead (generally 1--2$\times$), as they require only element-wise weight penalties.
The bound-width and stability regularizers $\mathcal{R}_{\mathrm{BW}}$, $\mathcal{R}_{\mathrm{SN}}$ and $\mathcal{R}_{\mathrm{SN2}}$ incur similar overheads to each other and slightly more than the shrinkage regularizers. We also observe their computational costs scale with network depth due to the layer-by-layer IBP propagation.
The LP-based regularizer $\mathcal{R}_{\mathrm{LP}}$ is the most expensive, at approximately 5--10$\times$, reflecting the cost of solving one full LP relaxation of the network per regularization sample; the combined regularizer $\mathcal{R}_{\mathrm{BW}}+\mathcal{R}_{\mathrm{LP}}$ similarly reflects computational costs dominated by the LP component.
Note these overheads are incurred only once during model training time and can potentially be amortized over usage in many downstream optimization instances. 

\begin{table}[h]
\caption{Training-time overhead of each regularizer relative to the unregularized baseline, averaged across \texttt{ackley-2}, \texttt{himmelblau}, \texttt{peaks}. Each cell shows $\bar{r} = \frac{1}{|\mathcal{B}|}\sum_{b \in \mathcal{B}} \frac{\bar{t}_{b,\mathrm{reg}}}{\bar{t}_{b,\mathrm{none}}}$, where $\bar{t}$ is the mean training time over seeds. }
\label{tab:train_time_ratios}
\centering
\begin{tabular}{ll  c  c  c}
\toprule
regularizer & $\lambda$ & \multicolumn{1}{c}{\texttt{2-25-25-1}} & \multicolumn{1}{c}{\texttt{2-25-25-25-1}} & \multicolumn{1}{c}{\texttt{2-25-25-25-25-25-1}} \\
\midrule
Baseline ($t_0$) & --- & $70.0 \pm 12.3$\,s & $83.9 \pm 13.6$\,s & $107.6 \pm 22.2$\,s \\
\midrule
None & --- & $1.00$ & $1.00$ & $1.00$ \\
\midrule
\multirow{3}{*}{$\mathcal{R}_{\mathrm{L1}}$} & $10^{-4}$ & $1.58 \pm 0.31$ & $1.62 \pm 0.31$ & $1.74 \pm 0.30$ \\
 & $10^{-3}$ & $1.19 \pm 0.02$ & $1.23 \pm 0.02$ & $1.25 \pm 0.04$ \\
 & $10^{-2}$ & $1.21 \pm 0.04$ & $1.23 \pm 0.04$ & $1.24 \pm 0.04$ \\
\cmidrule(lr){1-5}
\multirow{3}{*}{$\mathcal{R}_{\mathrm{L2}}$} & $10^{-4}$ & $2.06 \pm 0.73$ & $1.55 \pm 0.30$ & $1.86 \pm 0.65$ \\
 & $10^{-3}$ & $1.28 \pm 0.06$ & $1.25 \pm 0.06$ & $1.30 \pm 0.07$ \\
 & $10^{-2}$ & $1.29 \pm 0.12$ & $1.23 \pm 0.08$ & $1.35 \pm 0.07$ \\
\cmidrule(lr){1-5}
\multirow{3}{*}{$\mathcal{R}_{\mathrm{BW}}$} & $10^{-4}$ & $1.65 \pm 0.26$ & $1.77 \pm 0.25$ & $2.45 \pm 0.40$ \\
 & $10^{-3}$ & $1.42 \pm 0.03$ & $1.49 \pm 0.06$ & $1.70 \pm 0.09$ \\
 & $10^{-2}$ & $1.41 \pm 0.04$ & $1.52 \pm 0.04$ & $1.76 \pm 0.05$ \\
\cmidrule(lr){1-5}
\multirow{3}{*}{$\mathcal{R}_{\mathrm{SN}}$} & $10^{-4}$ & $1.75 \pm 0.20$ & $2.08 \pm 0.22$ & $3.01 \pm 0.77$ \\
 & $10^{-3}$ & $1.59 \pm 0.04$ & $1.67 \pm 0.07$ & $1.91 \pm 0.08$ \\
 & $10^{-2}$ & $1.58 \pm 0.01$ & $1.64 \pm 0.09$ & $1.92 \pm 0.10$ \\
\cmidrule(lr){1-5}
\multirow{3}{*}{$\mathcal{R}_{\mathrm{SN2}}$} & $10^{-4}$ & $2.38 \pm 0.57$ & $2.02 \pm 0.41$ & $2.19 \pm 0.40$ \\
 & $10^{-3}$ & $1.53 \pm 0.03$ & $1.57 \pm 0.05$ & $1.79 \pm 0.08$ \\
 & $10^{-2}$ & $1.51 \pm 0.05$ & $1.59 \pm 0.03$ & $1.83 \pm 0.11$ \\
\cmidrule(lr){1-5}
\multirow{3}{*}{$\mathcal{R}_{\mathrm{LP}}$} & $10^{-4}$ & $6.06 \pm 1.21$ & $6.64 \pm 0.64$ & $10.92 \pm 0.34$ \\
 & $10^{-3}$ & $4.55 \pm 0.09$ & $6.38 \pm 0.21$ & $9.74 \pm 0.43$ \\
 & $10^{-2}$ & $4.59 \pm 0.07$ & $6.07 \pm 0.23$ & $9.09 \pm 0.55$ \\
\cmidrule(lr){1-5}
\multirow{3}{*}{$\mathcal{R}_{\mathrm{BW}}+\mathcal{R}_{\mathrm{LP}}$} & $10^{-4}$ & $5.92 \pm 0.57$ & $7.11 \pm 1.26$ & $10.28 \pm 0.45$ \\
 & $10^{-3}$ & $4.37 \pm 0.03$ & $5.14 \pm 0.24$ & $7.28 \pm 0.38$ \\
 & $10^{-2}$ & $4.01 \pm 0.08$ & $4.41 \pm 0.14$ & $5.43 \pm 0.50$ \\
\botrule
\end{tabular}
\end{table}

\paragraph{Accuracy vs tractability tradeoff.}
Table~\ref{tab:test_mse_ratios} shows normalized test MSE ratios relative to the unregularized baseline. 
Across all regularization methods, increasing the regularization penalty induces a reduced test accuracy as expected. 
Shrinkage regularizers ($\mathcal{R}_{\mathrm{L1}}$, $\mathcal{R}_{\mathrm{L2}}$) impose a steep accuracy penalty even at the moderate regularization strengths considered, with ratios exceeding 100$\times$ at $\lambda = 10^{-3}$ for the deepest architectures. Note that, following convention, these shrinkage regularizers are not normalized by the number of model parameters, while the per-sample regularizers are averaged over number of samples. 

On the other hand, the proposed bound-width ($\mathcal{R}_{\mathrm{BW}}$) and stability ($\mathcal{R}_{\mathrm{SN}}$) regularizers achieve ratios near, or even below unity at $\lambda = 10^{-4}$, indicating that mild regularization of the IBP bound widths may even provide a beneficial implicit regularization that simultaneously improves generalization and MILP tractability (though we do not claim this in general). 
The combined $\mathcal{R}_{\mathrm{BW}}+\mathcal{R}_{\mathrm{LP}}$ regularizer shows the same property at $\lambda=10^{-4}$. Nevertheless, for these regularizers, accuracy again degrades as $\lambda$ increases, most notably for the LP and combined regularizers. 
These results suggest that, in general regularizers provide a handle for tuning the tradeoff between surrogate model quality and the tractability of downstream optimization applications.

\begin{table}
\caption{Test MSE of each regularizer relative to the unregularized baseline, averaged across \texttt{ackley-2}, \texttt{himmelblau}, \texttt{peaks}. Cells show $\bar{r} = \frac{1}{|\mathcal{B}|}\sum_{b \in \mathcal{B}} \frac{\bar{m}_{b,\mathrm{reg}}}{\bar{m}_{b,\mathrm{none}}}$ $\pm$ std, where $\bar{m}$ is the mean test MSE over seeds. Raw MSE values differ across benchmarks, so only the normalised ratio is shown. Values $<1$ indicate lower test error than the baseline.}
\label{tab:test_mse_ratios}
\centering
\begin{tabular}{ll  c  c  c}
\toprule
regularizer & $\lambda$ & \multicolumn{1}{c}{\texttt{2-25-25-1}} & \multicolumn{1}{c}{\texttt{2-25-25-25-1}} & \multicolumn{1}{c}{\texttt{2-25-25-25-25-25-1}} \\
\midrule
None & --- & $1.00$ & $1.00$ & $1.00$ \\
\midrule
\multirow{3}{*}{$\mathcal{R}_{\mathrm{L1}}$} & $10^{-4}$ & $4.51 \pm 3.36$ & $10.55 \pm 7.11$ & $32.86 \pm 13.38$ \\
 & $10^{-3}$ & $45.48 \pm 43.78$ & $106.86 \pm 94.31$ & $352.35 \pm 235.47$ \\
 & $10^{-2}$ & $148.46 \pm 147.78$ & $415.74 \pm 416.71$ & $1493.12 \pm 1290.10$ \\
\cmidrule(lr){1-5}
\multirow{3}{*}{$\mathcal{R}_{\mathrm{L2}}$} & $10^{-4}$ & $1.92 \pm 1.16$ & $3.71 \pm 1.52$ & $12.09 \pm 4.73$ \\
 & $10^{-3}$ & $18.72 \pm 19.17$ & $35.29 \pm 30.76$ & $96.81 \pm 58.84$ \\
 & $10^{-2}$ & $84.11 \pm 71.10$ & $212.63 \pm 170.74$ & $760.82 \pm 525.09$ \\
\cmidrule(lr){1-5}
\multirow{3}{*}{$\mathcal{R}_{\mathrm{BW}}$} & $10^{-4}$ & $0.54 \pm 0.22$ & $0.40 \pm 0.13$ & $0.78 \pm 0.06$ \\
 & $10^{-3}$ & $0.89 \pm 0.38$ & $0.99 \pm 0.31$ & $1.75 \pm 0.23$ \\
 & $10^{-2}$ & $3.15 \pm 2.06$ & $4.77 \pm 2.06$ & $11.04 \pm 4.53$ \\
\cmidrule(lr){1-5}
\multirow{3}{*}{$\mathcal{R}_{\mathrm{SN}}$} & $10^{-4}$ & $0.74 \pm 0.17$ & $0.61 \pm 0.21$ & $0.80 \pm 0.20$ \\
 & $10^{-3}$ & $0.63 \pm 0.26$ & $0.68 \pm 0.27$ & $1.34 \pm 0.10$ \\
 & $10^{-2}$ & $2.28 \pm 1.16$ & $3.11 \pm 1.04$ & $6.47 \pm 2.30$ \\
\cmidrule(lr){1-5}
\multirow{3}{*}{$\mathcal{R}_{\mathrm{SN2}}$} & $10^{-4}$ & $0.97 \pm 0.02$ & $0.97 \pm 0.03$ & $1.01 \pm 0.02$ \\
 & $10^{-3}$ & $0.93 \pm 0.04$ & $0.95 \pm 0.03$ & $1.00 \pm 0.05$ \\
 & $10^{-2}$ & $1.10 \pm 0.12$ & $0.99 \pm 0.07$ & $0.95 \pm 0.02$ \\
\cmidrule(lr){1-5}
\multirow{3}{*}{$\mathcal{R}_{\mathrm{LP}}$} & $10^{-4}$ & $0.77 \pm 0.35$ & $0.65 \pm 0.27$ & $0.90 \pm 0.14$ \\
 & $10^{-3}$ & $0.98 \pm 0.58$ & $1.02 \pm 0.47$ & $2.17 \pm 0.97$ \\
 & $10^{-2}$ & $3.69 \pm 3.18$ & $7.20 \pm 5.45$ & $33.13 \pm 28.23$ \\
\cmidrule(lr){1-5}
\multirow{3}{*}{$\mathcal{R}_{\mathrm{BW}}+\mathcal{R}_{\mathrm{LP}}$} & $10^{-4}$ & $0.51 \pm 0.25$ & $0.47 \pm 0.21$ & $0.89 \pm 0.16$ \\
 & $10^{-3}$ & $1.45 \pm 0.89$ & $1.87 \pm 0.83$ & $3.75 \pm 1.54$ \\
 & $10^{-2}$ & $9.51 \pm 9.58$ & $12.47 \pm 10.34$ & $42.91 \pm 28.30$ \\
\botrule
\end{tabular}
\end{table}

\subsubsection{Optimization Results}
Tables~\ref{tab:himmelblau}--\ref{tab:ackley2d} report results for surrogate models trained with the various regularization strategies on two-dimensional benchmark functions. 
Four MILP tractability metrics are reported: number of unstable neurons $|\mathcal{U}|$, LP relaxation gap, MILP node count, and wall-clock MILP solve time. 
The unregularized baseline is computationally intractable on the deepest architectures for all benchmarks, consistently exceeding the 1800\,s MILP time limit. 

Overall, models trained with the proposed regularizers exhibit reduced MILP solve times, by up to four orders of magnitude relative to the baseline. Recall that, stronger regularization can further reduce MILP solve time at the expense of accuracy (Table~\ref{tab:test_mse_ratios}). 
To show this performance tradeoff, rows are shaded in grey when the mean objective found in the downstream problem is at least 5\% higher than the objective found using the unregularized surrogate model. 
On the simple \texttt{himmelblau} function (Table~\ref{tab:himmelblau}), surrogate model training appears especially sensitive to the shrinkage regularizers, where regularization degrades downstream performance in all cases except the weakest L2 regularizer. 
Models trained with the bound-width regularizer weakly included considerably accelerate the downstream MILP solution time without affecting solution quality. For the slightly more complicated \texttt{peaks} function (Table~\ref{tab:peaks}), the shrinkage regularizers again generally degrade decision performance for smaller surrogate models. This trend is mitigated for the deepest model (five hidden layers), which is more over-parameterized. 
In this setting, we found that regularization with the combined bound-width and LP regularizer to greatly accelerate downstream MILP solution, including many settings where the MILP can be solved in a single node. 

Overall, on these simple functions, $\mathcal{R}_{\mathrm{BW}}$ at $\lambda = 10^{-3}$ reduces unstable neurons by roughly 50\% on \texttt{himmelblau} and \texttt{peaks} (e.g., from 75 to 32--43 for the 3-layer architecture) and drives MILP times below 1\,s across most architectures on the simpler benchmarks. 
The stability regularizer $\mathcal{R}_{\mathrm{SN}}$ achieves similar reductions in $|\mathcal{U}|$ and MILP times, with slightly less aggressive compression of the LP gap as expected. 
The second stability regularizer $\mathcal{R}_{\mathrm{SN2}}$ consistently produces worse models compared to $\mathcal{R}_{\mathrm{SN}}$, suggesting the weaker gradient signal can make it less effective in practice. 
The combined $\mathcal{R}_{\mathrm{BW}}+\mathcal{R}_{\mathrm{LP}}$ regularizer is particularly effective at the lowest regularization strength ($\lambda=10^{-4}$), where it simultaneously achieves the smallest $|\mathcal{U}|$ and near-zero LP gap, resulting in the best MILP solve times in most settings where solution quality is not degraded. 
For the challenging \texttt{ackley-2} function (Table~\ref{tab:ackley2d}), the combined regularizer again generally results in shortest downstream MILP solve times. Note that the shrinkage regularizers can also reduce MILP solve times in this setting without affecting solution quality, albeit to a lesser extent.

Interestingly, inclusion of the LP regularizer $\mathcal{R}_{\mathrm{LP}}$ alone nearly eliminates the LP relaxation gap (often to $< 0.01$), but does not reduce the number of unstable neurons, since it only tightens the continuous relaxation without encouraging neuron stability. 
As a result, Gurobi must still branch on a large number of binary ReLU variables, and solve times remain significant on the harder instances.
This complementarity motivates the combined regularizer: $\mathcal{R}_{\mathrm{BW}}$ drives neurons toward stability (reducing $|\mathcal{U}|$), while $\mathcal{R}_{\mathrm{LP}}$ tightens the LP gap, and together they compound to produce substantially smaller B\&B trees (and MILP solve times). 
In summary, results on these two-dimensional benchmark functions show that, for simpler functions (where models are more overparameterized), downstream performance is more sensitive to regularization weights, especially shrinkage regularizers, and weak relaxation-informed regularization can greatly accelerate downstream performance. For the challenging Ackley function, (where models are less overparametrized), most regularization techniques accelerate downstream solution, with the powerful combined regularizer producing models with the best performance across most architectures. 

\paragraph{Effect of function complexity and architecture depth.}
The relative benefit of the proposed regularizers grows with both function complexity and network depth.
On \texttt{himmelblau} (the simplest benchmark), $\mathcal{R}_{\mathrm{BW}}$ at $\lambda=10^{-3}$ is sufficient to reduce the 5-layer MILP from infeasible within the time limit to 0.23\,s on average.
On \texttt{ackley-2}, higher LP gaps in the baseline (e.g., 17.84 vs.\ 13.39 for \texttt{peaks} with two hidden layers) mean that reducing $|\mathcal{U}|$ alone is not sufficient at low $\lambda$, and the combined regularizer is needed for consistently fast solves.

Table~\ref{tab:ackley5d} shows results for some larger NNs trained on the five-dimensional Ackley function. The L2 regularizer was omitted from these experiments, as we found it to be dominated by the L1 regularizer in previous experiments, similar to literature observations for this setting~\cite{plate2026analysis}. 
On \texttt{ackley-5} function, even with more neurons (up to 250 in the 50-wide architecture), the combined $\mathcal{R}_{\mathrm{BW}}+\mathcal{R}_{\mathrm{LP}}$ regularizer at $\lambda=10^{-4}$ reduces the 5-layer solve time from $>1800$\,s to under 1\,s while maintaining competitive surrogate accuracy. 
The standard L1 shrinkage regularizers offers less noticeable tractability improvements on these harder instances (larger NN models), though they are effective on the smaller models, e.g., three hidden layers (middle column of Table~\ref{tab:ackley5d}). 
This observation suggests that directly targeting the structure of the MILP embedding is essential for large gains in more challenging settings, where surrogate models are larger and less overparametrized.

\begin{sidewaystable}[h]
\caption{Results on the \texttt{himmelblau} benchmark across architectures and regularizers. Each regularizer family shows three rows for $\lambda \in \{10^{-4}, 10^{-3}, 10^{-2}\}$. $|\mathcal{U}|$: mean number of unstable neurons; LP gap: mean LP relaxation gap; MILP nodes/time: mean branch-and-bound nodes and wall-clock time (s).
\textbf{Bold} values indicate the best (lowest) result across all regularizers at each $\lambda$ level per architecture; ties are all bolded. 
\colorbox{gray!20}{Shaded} entries mark when the mean objective value found is worse (higher) than the unregularized baseline; such rows are excluded from best-value consideration.}
\label{tab:himmelblau}
\centering
\begin{tabular}{ll  cccc  cccc  cccc}
\toprule
\multirow{3}{*}{regularizer} & \multirow{3}{*}{$\lambda$} & \multicolumn{4}{c}{\texttt{2-25-25-1}} & \multicolumn{4}{c}{\texttt{2-25-25-25-1}} & \multicolumn{4}{c}{\texttt{2-25-25-25-25-25-1}} \\
\cmidrule(lr){3-6} \cmidrule(lr){7-10} \cmidrule(lr){11-14}
 & & $|\mathcal{U}|$ & LP gap & \multicolumn{2}{c}{MILP} & $|\mathcal{U}|$ & LP gap & \multicolumn{2}{c}{MILP} & $|\mathcal{U}|$ & LP gap & \multicolumn{2}{c}{MILP} \\
\cmidrule(lr){5-6} \cmidrule(lr){9-10} \cmidrule(lr){13-14}
 &  &  &  & nodes & time &  &  & nodes & time &  &  & nodes & time \\
\midrule
None & --- & $50.0$ & $23.93$ & $18{,}530$ & $4.08$ & $75.0$ & $53.91$ & $841{,}316$ & $243.87$ & $125.0$ & $350.81$ & $>2{,}321{,}554$ & $>1800$ \\
\midrule
\multirow{3}{*}{$\mathcal{R}_{\mathrm{L1}}$} & $10^{-4}$ & \cellcolor{gray!20}$49.5$ & \cellcolor{gray!20}$1.12$ & \cellcolor{gray!20}$49$ & \cellcolor{gray!20}$0.14$ & \cellcolor{gray!20}$71.2$ & \cellcolor{gray!20}$0.37$ & \cellcolor{gray!20}$67$ & \cellcolor{gray!20}$0.25$ & \cellcolor{gray!20}$105.0$ & \cellcolor{gray!20}$0.28$ & \cellcolor{gray!20}$778$ & \cellcolor{gray!20}$2.02$ \\
 & $10^{-3}$ & \cellcolor{gray!20}$46.5$ & \cellcolor{gray!20}$0.51$ & \cellcolor{gray!20}$19$ & \cellcolor{gray!20}$0.07$ & \cellcolor{gray!20}$61.5$ & \cellcolor{gray!20}$0.04$ & \cellcolor{gray!20}$1$ & \cellcolor{gray!20}$0.06$ & \cellcolor{gray!20}$85.2$ & \cellcolor{gray!20}$0.51$ & \cellcolor{gray!20}$1{,}412$ & \cellcolor{gray!20}$2.9$ \\
 & $10^{-2}$ & \cellcolor{gray!20}$49.9$ & \cellcolor{gray!20}$6.85$ & \cellcolor{gray!20}$3{,}400$ & \cellcolor{gray!20}$1.44$ & \cellcolor{gray!20}$75.0$ & \cellcolor{gray!20}$15.15$ & \cellcolor{gray!20}$742{,}272$ & \cellcolor{gray!20}$295.94$ & $125.0$ & $65.26$ & $>2{,}088{,}555$ & $>1800$ \\
\cmidrule(lr){1-14}
\multirow{3}{*}{$\mathcal{R}_{\mathrm{L2}}$} & $10^{-4}$ & $39.0$ & $0.29$ & $32$ & $0.18$ & $53.8$ & $0.07$ & $148$ & $0.39$ & $\mathbf{70.5}$ & $0.01$ & $205$ & $\mathbf{0.50}$ \\
 & $10^{-3}$ & \cellcolor{gray!20}$43.5$ & \cellcolor{gray!20}$0.21$ & \cellcolor{gray!20}$1$ & \cellcolor{gray!20}$0.04$ & \cellcolor{gray!20}$65.0$ & \cellcolor{gray!20}$0.13$ & \cellcolor{gray!20}$416$ & \cellcolor{gray!20}$0.48$ & \cellcolor{gray!20}$94.6$ & \cellcolor{gray!20}$1.14$ & \cellcolor{gray!20}$112{,}459$ & \cellcolor{gray!20}$73.40$ \\
 & $10^{-2}$ & \cellcolor{gray!20}$49.4$ & \cellcolor{gray!20}$6.12$ & \cellcolor{gray!20}$5{,}610$ & \cellcolor{gray!20}$1.64$ & \cellcolor{gray!20}$73.5$ & \cellcolor{gray!20}$10.84$ & \cellcolor{gray!20}$207{,}382$ & \cellcolor{gray!20}$70.27$ & \cellcolor{gray!20}$123.8$ & \cellcolor{gray!20}$72.41$ & \cellcolor{gray!20}$2{,}443{,}711$ & \cellcolor{gray!20}$1649.91$ \\
\cmidrule(lr){1-14}
\multirow{3}{*}{$\mathcal{R}_{\mathrm{BW}}$} & $10^{-4}$ & $39.3$ & $1.64$ & $231$ & $0.38$ & $52.1$ & $0.31$ & $275$ & $0.46$ & $79.2$ & $0.24$ & $1{,}280$ & $1.73$ \\
 & $10^{-3}$ & $\mathbf{23.6}$ & $0.26$ & $\mathbf{1}$ & $\mathbf{0.03}$ & $\mathbf{32.7}$ & $\mathbf{0.13}$ & $\mathbf{14}$ & $\mathbf{0.07}$ & $\mathbf{51.2}$ & $\mathbf{0.02}$ & $\mathbf{41}$ & $\mathbf{0.23}$ \\
 & $10^{-2}$ & \cellcolor{gray!20}$13.2$ & \cellcolor{gray!20}$0.13$ & \cellcolor{gray!20}$1$ & \cellcolor{gray!20}$0.01$ & \cellcolor{gray!20}$18.4$ & \cellcolor{gray!20}$0.04$ & \cellcolor{gray!20}$1$ & \cellcolor{gray!20}$0.02$ & $\mathbf{28.8}$ & $\mathbf{0.01}$ & $\mathbf{1}$ & $\mathbf{0.03}$ \\
\cmidrule(lr){1-14}
\multirow{3}{*}{$\mathcal{R}_{\mathrm{SN}}$} & $10^{-4}$ & $45.3$ & $5.61$ & $1{,}148$ & $0.65$ & $61.8$ & $1.55$ & $2{,}327$ & $1.21$ & $88.4$ & $0.54$ & $9{,}531$ & $8.06$ \\
 & $10^{-3}$ & $25.5$ & $0.64$ & $\mathbf{1}$ & $0.04$ & \cellcolor{gray!20}$34.9$ & \cellcolor{gray!20}$0.40$ & \cellcolor{gray!20}$23$ & \cellcolor{gray!20}$0.15$ & $53.7$ & $0.11$ & $312$ & $0.74$ \\
 & $10^{-2}$ & $\mathbf{13.4}$ & $\mathbf{0.35}$ & $\mathbf{1}$ & $\mathbf{0.01}$ & \cellcolor{gray!20}$17.0$ & \cellcolor{gray!20}$0.30$ & \cellcolor{gray!20}$1$ & \cellcolor{gray!20}$0.02$ & \cellcolor{gray!20}$28.0$ & \cellcolor{gray!20}$0.06$ & \cellcolor{gray!20}$1$ & \cellcolor{gray!20}$0.06$ \\
\cmidrule(lr){1-14}
\multirow{3}{*}{$\mathcal{R}_{\mathrm{SN2}}$} & $10^{-4}$ & $50.0$ & $24.15$ & $15{,}001$ & $3.32$ & \cellcolor{gray!20}$74.8$ & \cellcolor{gray!20}$52.72$ & \cellcolor{gray!20}$1{,}028{,}957$ & \cellcolor{gray!20}$232.30$ & $124.0$ & $334.55$ & $>3{,}090{,}349$ & $>1800$ \\
 & $10^{-3}$ & \cellcolor{gray!20}$49.6$ & \cellcolor{gray!20}$24.09$ & \cellcolor{gray!20}$18{,}414$ & \cellcolor{gray!20}$4.36$ & $73.7$ & $52.28$ & $978{,}297$ & $276.49$ & $123.0$ & $347.01$ & $>2{,}574{,}826$ & $>1800$ \\
 & $10^{-2}$ & $48.4$ & $23.70$ & $18{,}444$ & $5.11$ & \cellcolor{gray!20}$72.5$ & \cellcolor{gray!20}$51.46$ & \cellcolor{gray!20}$822{,}372$ & \cellcolor{gray!20}$284.34$ & $122.5$ & $329.29$ & $>2{,}483{,}279$ & $>1800$ \\
\cmidrule(lr){1-14}
\multirow{3}{*}{$\mathcal{R}_{\mathrm{LP}}$} & $10^{-4}$ & $50.0$ & $0.51$ & $159$ & $0.46$ & $75.0$ & $0.11$ & $1{,}906$ & $1.07$ & $125.0$ & $0.01$ & $693{,}059$ & $582.14$ \\
 & $10^{-3}$ & $49.9$ & $\mathbf{0.02}$ & $60$ & $0.22$ & \cellcolor{gray!20}$75.0$ & \cellcolor{gray!20}$0.01$ & \cellcolor{gray!20}$1{,}316$ & \cellcolor{gray!20}$0.98$ & \cellcolor{gray!20}$125.0$ & \cellcolor{gray!20}$0.00$ & \cellcolor{gray!20}$35{,}418$ &\cellcolor{gray!20} $33.46$ \\
 & $10^{-2}$ & \cellcolor{gray!20}$49.5$ & \cellcolor{gray!20}$0.00$ & \cellcolor{gray!20}$1$ & \cellcolor{gray!20}$0.06$ & \cellcolor{gray!20}$74.9$ & \cellcolor{gray!20}$0.00$ & \cellcolor{gray!20}$163$ & \cellcolor{gray!20}$0.27$ & \cellcolor{gray!20}$124.8$ & \cellcolor{gray!20}$0.00$ & \cellcolor{gray!20}$7{,}184$ & \cellcolor{gray!20}$7.38$ \\
\cmidrule(lr){1-14}
\multirow{3}{*}{$\mathcal{R}_{\mathrm{BW}}+\mathcal{R}_{\mathrm{LP}}$} & $10^{-4}$ & $\mathbf{32.2}$ & $\mathbf{0.08}$ & $\mathbf{1}$ & $\mathbf{0.03}$ & $\mathbf{46.6}$ & $\mathbf{0.00}$ & $\mathbf{1}$ & $\mathbf{0.07}$ & $77.2$ & $\mathbf{0.00}$ & $\mathbf{204}$ & $0.60$ \\
 & $10^{-3}$ & \cellcolor{gray!20}$18.9$ & \cellcolor{gray!20}$0.00$ & \cellcolor{gray!20}$1$ & \cellcolor{gray!20}$0.01$ & \cellcolor{gray!20}$28.6$ & \cellcolor{gray!20}$0.00$ & \cellcolor{gray!20}$1$ & \cellcolor{gray!20}$0.02$ & \cellcolor{gray!20}$46.4$ & \cellcolor{gray!20}$0.00$ & \cellcolor{gray!20}$1$ & \cellcolor{gray!20}$0.05$ \\
 & $10^{-2}$ & \cellcolor{gray!20}$11.8$ & \cellcolor{gray!20}$0.00$ & \cellcolor{gray!20}$1$ & \cellcolor{gray!20}$0.01$ & \cellcolor{gray!20}$14.1$ & \cellcolor{gray!20}$0.00$ & \cellcolor{gray!20}$1$ & \cellcolor{gray!20}$0.01$ & \cellcolor{gray!20}$20.8$ & \cellcolor{gray!20}$0.00$ & \cellcolor{gray!20}$1$ & \cellcolor{gray!20}$0.01$ \\
\botrule
\end{tabular}
\end{sidewaystable}

\begin{sidewaystable}[h]
\caption{Results on the \texttt{peaks} benchmark across architectures and regularizers. Each regularizer family shows three rows for $\lambda \in \{10^{-4}, 10^{-3}, 10^{-2}\}$. $|\mathcal{U}|$: mean number of unstable neurons; LP gap: mean LP relaxation gap; MILP nodes/time: mean branch-and-bound nodes and wall-clock time (s).
\textbf{Bold} values indicate the best (lowest) result across all regularizers at each $\lambda$ level per architecture; ties are all bolded. 
\colorbox{gray!20}{Shaded} entries mark when the mean objective value found is worse (higher) than the unregularized baseline; such rows are excluded from best-value consideration.}
\label{tab:peaks}
\centering
\begin{tabular}{ll  cccc  cccc  cccc}
\toprule
\multirow{3}{*}{regularizer} & \multirow{3}{*}{$\lambda$} & \multicolumn{4}{c}{\texttt{2-25-25-1}} & \multicolumn{4}{c}{\texttt{2-25-25-25-1}} & \multicolumn{4}{c}{\texttt{2-25-25-25-25-25-1}} \\
\cmidrule(lr){3-6} \cmidrule(lr){7-10} \cmidrule(lr){11-14}
 & & $|\mathcal{U}|$ & LP gap & \multicolumn{2}{c}{MILP} & $|\mathcal{U}|$ & LP gap & \multicolumn{2}{c}{MILP} & $|\mathcal{U}|$ & LP gap & \multicolumn{2}{c}{MILP} \\
\cmidrule(lr){5-6} \cmidrule(lr){9-10} \cmidrule(lr){13-14}
 &  &  &  & nodes & time &  &  & nodes & time &  &  & nodes & time \\
\midrule
None & --- & $50.0$ & $13.39$ & $10{,}547$ & $2.37$ & $74.9$ & $32.07$ & $462{,}620$ & $117.38$ & $124.9$ & $232.06$ & $>2{,}337{,}016$ & $>1800$ \\
\midrule
\multirow{3}{*}{$\mathcal{R}_{\mathrm{L1}}$} & $10^{-4}$ & \cellcolor{gray!20}$47.8$ & \cellcolor{gray!20}$1.22$ & \cellcolor{gray!20}$34$ & \cellcolor{gray!20}$0.08$ & \cellcolor{gray!20}$69.2$ & \cellcolor{gray!20}$0.44$ & \cellcolor{gray!20}$107$ & \cellcolor{gray!20}$0.23$ & $100.8$ & $1.58$ & $133{,}770$ & $90.61$ \\
 & $10^{-3}$ & \cellcolor{gray!20}$47.8$ & \cellcolor{gray!20}$2.31$ & \cellcolor{gray!20}$456$ & \cellcolor{gray!20}$0.36$ & \cellcolor{gray!20}$64.5$ & \cellcolor{gray!20}$3.79$ & \cellcolor{gray!20}$6{,}404$ & \cellcolor{gray!20}$3.22$ & $75.8$ & $4.07$ & $118{,}823$ & $125.17$ \\
 & $10^{-2}$ & \cellcolor{gray!20}$49.5$ & \cellcolor{gray!20}$2.61$ & \cellcolor{gray!20}$566$ & \cellcolor{gray!20}$0.43$ & \cellcolor{gray!20}$74.0$ & \cellcolor{gray!20}$2.95$ & \cellcolor{gray!20}$5{,}058$ & \cellcolor{gray!20}$2.48$ & $124.7$ & $10.64$ & $858{,}981$ & $613.41$ \\
\cmidrule(lr){1-14}
\multirow{3}{*}{$\mathcal{R}_{\mathrm{L2}}$} & $10^{-4}$ & $42.1$ & $1.00$ & $86$ & $0.23$ & $60.5$ & $0.29$ & $622$ & $0.51$ & $\mathbf{79.0}$ & $0.02$ & $649$ & $0.89$ \\
 & $10^{-3}$ & $\cellcolor{gray!20}44.0$ & \cellcolor{gray!20}$0.29$ & \cellcolor{gray!20}$42$ & \cellcolor{gray!20}$0.07$ & \cellcolor{gray!20}$59.8$ & \cellcolor{gray!20}$0.81$ & \cellcolor{gray!20}$2{,}680$ & \cellcolor{gray!20}$1.73$ & $72.2$ & $0.24$ & $1{,}606$ & $5.25$ \\
 & $10^{-2}$ & \cellcolor{gray!20}$50.0$ & \cellcolor{gray!20}$4.01$ & \cellcolor{gray!20}$1{,}726$ & \cellcolor{gray!20}$0.80$ & \cellcolor{gray!20}$75.0$ & \cellcolor{gray!20}$7.74$ & \cellcolor{gray!20}$36{,}476$ & \cellcolor{gray!20}$17.60$ & $125.0$ & $32.88$ & $2{,}308{,}125$ & $1341.62$ \\
\cmidrule(lr){1-14}
\multirow{3}{*}{$\mathcal{R}_{\mathrm{BW}}$} & $10^{-4}$ & $44.0$ & $2.07$ & $396$ & $0.49$ & $58.5$ & $0.83$ & $671$ & $0.72$ & $91.6$ & $0.39$ & $990$ & $2.11$ \\
 & $10^{-3}$ & $29.8$ & $0.82$ & $\mathbf{1}$ & $0.04$ & $43.1$ & $0.41$ & $\mathbf{1}$ & $0.08$ & $64.7$ & $0.10$ & $30$ & $0.17$ \\
 & $10^{-2}$ & $\mathbf{17.1}$ & $0.50$ & $\mathbf{1}$ & $\mathbf{0.02}$ & $23.1$ & $0.27$ & $\mathbf{1}$ & $0.02$ & $37.8$ & $0.02$ & $\mathbf{1}$ & $\mathbf{0.03}$ \\
\cmidrule(lr){1-14}
\multirow{3}{*}{$\mathcal{R}_{\mathrm{SN}}$} & $10^{-4}$ & $45.4$ & $4.03$ & $799$ & $0.60$ & $62.4$ & $1.86$ & $1{,}999$ & $1.46$ & $92.7$ & $0.88$ & $2{,}406$ & $4.11$ \\
 & $10^{-3}$ & $34.1$ & $1.00$ & $\mathbf{1}$ & $0.06$ & $44.0$ & $0.61$ & $\mathbf{1}$ & $0.10$ & $67.4$ & $0.27$ & $450$ & $0.57$ \\
 & $10^{-2}$ & $19.6$ & $0.83$ & $\mathbf{1}$ & $0.02$ & $24.3$ & $0.42$ & $\mathbf{1}$ & $0.03$ & $37.8$ & $0.16$ & $\mathbf{1}$ & $0.06$ \\
\cmidrule(lr){1-14}
\multirow{3}{*}{$\mathcal{R}_{\mathrm{SN2}}$} & $10^{-4}$ & $50.0$ & $12.26$ & $8{,}454$ & $2.15$ & $74.6$ & $27.77$ & $297{,}886$ & $87.00$ & $123.3$ & $200.84$ & $>3{,}083{,}439$ & $>1800$ \\
 & $10^{-3}$ & $46.7$ & $10.59$ & $5{,}319$ & $1.43$ & $68.7$ & $25.24$ & $171{,}206$ & $52.98$ & $115.0$ & $179.48$ & $3{,}168{,}132$ & $1773.63$ \\
 & $10^{-2}$ & $40.2$ & $9.38$ & $3{,}967$ & $1.18$ & $62.5$ & $21.51$ & $94{,}206$ & $32.22$ & $111.2$ & $158.83$ & $2{,}735{,}933$ & $1653.35$ \\
\cmidrule(lr){1-14}
\multirow{3}{*}{$\mathcal{R}_{\mathrm{LP}}$} & $10^{-4}$ & $50.0$ & $0.24$ & $\mathbf{1}$ & $0.05$ & $74.9$ & $0.01$ & $568$ & $0.39$ & $124.9$ & $\mathbf{0.00}$ & $51{,}325$ & $70.36$ \\
 & $10^{-3}$ & $50.0$ & $0.00$ & $\mathbf{1}$ & $0.05$ & $75.0$ & $0.00$ & $11$ & $0.13$ & $124.8$ & $\mathbf{0.00}$ & $107{,}905$ & $123.91$ \\
 & $10^{-2}$ & $50.0$ & $\mathbf{0.00}$ & $\mathbf{1}$ & $0.04$ & $74.9$ & $0.00$ & $\mathbf{1}$ & $0.09$ & $125.0$ & $0.00$ & $40{,}729$ & $47.21$ \\
\cmidrule(lr){1-14}
\multirow{3}{*}{$\mathcal{R}_{\mathrm{BW}}+\mathcal{R}_{\mathrm{LP}}$} & $10^{-4}$ & $\mathbf{38.9}$ & $\mathbf{0.12}$ & $\mathbf{1}$ & $\mathbf{0.05}$ & $\mathbf{55.6}$ & $\mathbf{0.00}$ & $\mathbf{1}$ & $\mathbf{0.10}$ & $90.3$ & $0.00$ & $\mathbf{189}$ & $\mathbf{0.60}$ \\
 & $10^{-3}$ & $\mathbf{26.3}$ & $\mathbf{0.00}$ & $\mathbf{1}$ & $\mathbf{0.01}$ & $\mathbf{41.1}$ & $\mathbf{0.00}$ & $\mathbf{1}$ & $\mathbf{0.03}$ & $\mathbf{59.4}$ & $0.00$ & $\mathbf{1}$ & $\mathbf{0.12}$ \\
 & $10^{-2}$ & \cellcolor{gray!20}$14.4$ & \cellcolor{gray!20}$0.00$ & \cellcolor{gray!20}$1$ & \cellcolor{gray!20}$0.01$ & $\mathbf{21.6}$ & $\mathbf{0.00}$ & $\mathbf{1}$ & $\mathbf{0.01}$ & $\mathbf{29.3}$ & $\mathbf{0.00}$ & $\mathbf{1}$ & $0.03$ \\
\botrule
\end{tabular}
\end{sidewaystable}

\begin{sidewaystable}[h]
\caption{Results on the \texttt{ackley-2} benchmark across architectures and regularizers. Each regularizer family shows three rows for $\lambda \in \{10^{-4}, 10^{-3}, 10^{-2}\}$. $|\mathcal{U}|$: mean number of unstable neurons; LP gap: mean LP relaxation gap; MILP nodes/time: mean branch-and-bound nodes and wall-clock time (s).
\textbf{Bold} values indicate the best (lowest) result across all regularizers at each $\lambda$ level per architecture; ties are all bolded. 
\colorbox{gray!20}{Shaded} entries mark when the mean objective value found is worse (higher) than the unregularized baseline; such rows are excluded from best-value consideration.}
\label{tab:ackley2d}
\centering
\begin{tabular}{ll  cccc  cccc  cccc}
\toprule
\multirow{3}{*}{regularizer} & \multirow{3}{*}{$\lambda$} & \multicolumn{4}{c}{\texttt{2-25-25-1}} & \multicolumn{4}{c}{\texttt{2-25-25-25-1}} & \multicolumn{4}{c}{\texttt{2-25-25-25-25-25-1}} \\
\cmidrule(lr){3-6} \cmidrule(lr){7-10} \cmidrule(lr){11-14}
 & & $|\mathcal{U}|$ & LP gap & \multicolumn{2}{c}{MILP} & $|\mathcal{U}|$ & LP gap & \multicolumn{2}{c}{MILP} & $|\mathcal{U}|$ & LP gap & \multicolumn{2}{c}{MILP} \\
\cmidrule(lr){5-6} \cmidrule(lr){9-10} \cmidrule(lr){13-14}
 &  &  &  & nodes & time &  &  & nodes & time &  &  & nodes & time \\
\midrule
None & --- & $50.0$ & $17.84$ & $14{,}683$ & $3.82$ & $75.0$ & $94.92$ & $1{,}642{,}697$ & $462.94$ & $125.0$ & $777.85$ & $>2{,}558{,}663$ & $>1800$ \\
\midrule
\multirow{3}{*}{$\mathcal{R}_{\mathrm{L1}}$} & $10^{-4}$ & $50.0$ & $6.15$ & $2{,}963$ & $1.11$ & $74.8$ & $17.13$ & $474{,}665$ & $115.66$ & $123.3$ & $88.96$ & $1{,}752{,}605$ & $1304.37$ \\
 & $10^{-3}$ & $47.4$ & $2.34$ & $1{,}012$ & $0.52$ & $69.9$ & $6.14$ & $45{,}800$ & $15.65$ & $101.8$ & $32.29$ & $1{,}250{,}287$ & $900.52$ \\
 & $10^{-2}$ & $49.6$ & $2.13$ & $803$ & $0.51$ & $69.1$ & $1.62$ & $7{,}001$ & $3.45$ & $113.2$ & $10.66$ & $694{,}491$ & $648.10$ \\
\cmidrule(lr){1-14}
\multirow{3}{*}{$\mathcal{R}_{\mathrm{L2}}$} & $10^{-4}$ & $48.0$ & $6.56$ & $4{,}978$ & $1.34$ & $71.2$ & $19.20$ & $498{,}437$ & $98.04$ & $118.5$ & $107.00$ & $3{,}377{,}901$ & $1721.21$ \\
 & $10^{-3}$ & $46.5$ & $3.03$ & $1{,}223$ & $0.61$ & $71.8$ & $8.88$ & $78{,}488$ & $21.36$ & $117.0$ & $48.85$ & $1{,}934{,}706$ & $967.87$ \\
 & $10^{-2}$ & $48.7$ & $2.54$ & $1{,}072$ & $0.82$ & $73.3$ & $5.50$ & $37{,}752$ & $15.01$ & $124.7$ & $40.25$ & $2{,}450{,}682$ & $1443.08$ \\
\cmidrule(lr){1-14}
\multirow{3}{*}{$\mathcal{R}_{\mathrm{BW}}$} & $10^{-4}$ & $41.2$ & $4.94$ & $612$ & $0.69$ & $62.5$ & $9.36$ & $15{,}001$ & $5.01$ & $\mathbf{99.3}$ & $8.54$ & $486{,}829$ & $470.16$ \\
 & $10^{-3}$ & $22.3$ & $0.90$ & $\mathbf{1}$ & $0.03$ & $\mathbf{38.8}$ & $\mathbf{2.03}$ & $\mathbf{102}$ & $\mathbf{0.25}$ & $66.5$ & $1.78$ & $3{,}712$ & $2.46$ \\
 & $10^{-2}$ & $9.9$ & $0.12$ & $6$ & $0.02$ & \cellcolor{gray!20}$12.2$ & \cellcolor{gray!20}$0.05$ & \cellcolor{gray!20}$1$ & \cellcolor{gray!20}$0.01$ & $21.7$ & $0.08$ & $1$ & $0.02$ \\
\cmidrule(lr){1-14}
\multirow{3}{*}{$\mathcal{R}_{\mathrm{SN}}$} & $10^{-4}$ & $45.5$ & $6.73$ & $1{,}220$ & $0.79$ & $64.5$ & $11.36$ & $19{,}685$ & $9.29$ & $101.2$ & $12.68$ & $357{,}570$ & $492.44$ \\
 & $10^{-3}$ & \cellcolor{gray!20}$34.5$ & \cellcolor{gray!20}$2.33$ & \cellcolor{gray!20}$32$ & \cellcolor{gray!20}$0.15$ & \cellcolor{gray!20}$51.8$ & \cellcolor{gray!20}$3.65$ & \cellcolor{gray!20}$1{,}261$ & \cellcolor{gray!20}$0.87$ & $83.3$ & $3.54$ & $33{,}285$ & $35.72$ \\
 & $10^{-2}$ & \cellcolor{gray!20}$10.2$ & \cellcolor{gray!20}$0.16$ & \cellcolor{gray!20}$1$ & \cellcolor{gray!20}$0.01$ & \cellcolor{gray!20}$17.8$ & \cellcolor{gray!20}$0.34$ & \cellcolor{gray!20}$1$ & \cellcolor{gray!20}$0.02$ & $35.2$ & $0.55$ & $14$ & $0.11$ \\
\cmidrule(lr){1-14}
\multirow{3}{*}{$\mathcal{R}_{\mathrm{SN2}}$} & $10^{-4}$ & $49.7$ & $16.86$ & $16{,}744$ & $4.14$ & $74.9$ & $87.16$ & $1{,}170{,}921$ & $271.89$ & $124.5$ & $740.97$ & $>2{,}145{,}526$ & $>1800$ \\
 & $10^{-3}$ & $48.8$ & $16.66$ & $11{,}933$ & $2.83$ & $73.2$ & $84.83$ & $1{,}157{,}158$ & $273.73$ & $122.1$ & $759.84$ & $>2{,}670{,}470$ & $>1800$ \\
 & $10^{-2}$ & $45.9$ & $16.72$ & $12{,}411$ & $3.04$ & $70.5$ & $84.66$ & $1{,}078{,}148$ & $218.33$ & $120.3$ & $707.98$ & $>2{,}667{,}471$ & $>1800$ \\
\cmidrule(lr){1-14}
\multirow{3}{*}{$\mathcal{R}_{\mathrm{LP}}$} & $10^{-4}$ & $50.0$ & $\mathbf{0.81}$ & $49$ & $0.21$ & $75.0$ & $\mathbf{0.11}$ & $3{,}438$ & $1.71$ & $125.0$ & $0.02$ & $601{,}896$ & $592.19$ \\
 & $10^{-3}$ & \cellcolor{gray!20}$50.0$ & \cellcolor{gray!20}$0.00$ & \cellcolor{gray!20}$1$ & \cellcolor{gray!20}$0.07$ & \cellcolor{gray!20}$75.0$ & \cellcolor{gray!20}$0.01$ & \cellcolor{gray!20}$308$ & \cellcolor{gray!20}$0.35$ & $125.0$ & $0.00$ & $59{,}792$ & $74.28$ \\
 & $10^{-2}$ & $49.0$ & $\mathbf{0.00}$ & $6$ & $0.06$ & $74.5$ & $0.00$ & $4$ & $0.12$ & $124.8$ & $0.00$ & $1{,}834$ & $1.73$ \\
\cmidrule(lr){1-14}
\multirow{3}{*}{$\mathcal{R}_{\mathrm{BW}}+\mathcal{R}_{\mathrm{LP}}$} & $10^{-4}$ & $\mathbf{37.2}$ & $0.88$ & $\mathbf{20}$ & $\mathbf{0.10}$ & $\mathbf{58.0}$ & $0.21$ & $\mathbf{269}$ & $\mathbf{0.51}$ & $99.8$ & $\mathbf{0.00}$ & $\mathbf{2{,}772}$ & $\mathbf{2.53}$ \\
 & $10^{-3}$ & $\mathbf{19.6}$ & $\mathbf{0.07}$ & $5$ & $\mathbf{0.02}$ & \cellcolor{gray!20}$33.2$ & \cellcolor{gray!20}$0.00$ & \cellcolor{gray!20}$1$ & \cellcolor{gray!20}$0.02$ & $\mathbf{64.2}$ & $\mathbf{0.00}$ & $\mathbf{25}$ & $\mathbf{0.20}$ \\
 & $10^{-2}$ & $\mathbf{7.7}$ & $0.01$ & $\mathbf{1}$ & $\mathbf{0.00}$ & $\mathbf{8.3}$ & $\mathbf{0.00}$ & $\mathbf{1}$ & $\mathbf{0.00}$ & $\mathbf{10.8}$ & $\mathbf{0.00}$ & $\mathbf{1}$ & $\mathbf{0.01}$ \\
\botrule
\end{tabular}
\end{sidewaystable}

\begin{sidewaystable}[h]
\caption{Results on the \texttt{ackley-5} benchmark across architectures and regularizers. Each regularizer family shows three rows for $\lambda \in \{10^{-4}, 10^{-3}, 10^{-2}\}$. $|\mathcal{U}|$: mean number of unstable neurons; LP gap: mean LP relaxation gap; MILP nodes/time: mean branch-and-bound nodes and wall-clock time (s).
\textbf{Bold} values indicate the best (lowest) result across all regularizers at each $\lambda$ level per architecture; ties are all bolded. 
\colorbox{gray!20}{Shaded} entries mark when the mean objective value found is worse (higher) than the unregularized baseline; such rows are excluded from best-value consideration.}
\label{tab:ackley5d}
\centering
\begin{tabular}{ll  cccc  cccc  cccc}
\toprule
\multirow{3}{*}{regularizer} & \multirow{3}{*}{$\lambda$} & \multicolumn{4}{c}{\texttt{5-25-25-25-25-25-1}} & \multicolumn{4}{c}{\texttt{5-50-50-50-1}} & \multicolumn{4}{c}{\texttt{5-50-50-50-50-50-1}} \\
\cmidrule(lr){3-6} \cmidrule(lr){7-10} \cmidrule(lr){11-14}
 & & $|\mathcal{U}|$ & LP gap & \multicolumn{2}{c}{MILP} & $|\mathcal{U}|$ & LP gap & \multicolumn{2}{c}{MILP} & $|\mathcal{U}|$ & LP gap & \multicolumn{2}{c}{MILP} \\
\cmidrule(lr){5-6} \cmidrule(lr){9-10} \cmidrule(lr){13-14}
 &  &  &  & nodes & time &  &  & nodes & time &  &  & nodes & time \\
\midrule
None & --- & $125.0$ & $376.81$ & $2{,}851{,}019$ & $1541.25$ & $149.6$ & $32.70$ & $2{,}044{,}480$ & $1363.88$ & $249.9$ & $1955.61$ & $>468{,}135$ & $>1800$ \\
\midrule
\multirow{3}{*}{$\mathcal{R}_{\mathrm{L1}}$} & $10^{-4}$ & $84.8$ & $2.06$ & $166{,}609$ & $180.14$ & $122.8$ & $0.97$ & $152{,}175$ & $204.79$ & $204.2$ & $32.71$ & $241{,}319$ & $1042.25$ \\
 & $10^{-3}$ & $81.8$ & $0.08$ & $339$ & $0.47$ & $130.4$ & $\mathbf{1.55}$ & $\mathbf{474{,}476}$ & $\mathbf{611.46}$ & $181.0$ & $7.20$ & $207{,}418$ & $698.49$ \\
 & $10^{-2}$ & $123.0$ & $9.77$ & $872{,}461$ & $554.50$ & $\mathbf{148.3}$ & $\mathbf{5.13}$ & $\mathbf{566{,}103}$ & $\mathbf{788.14}$ & $229.2$ & $12.83$ & $277{,}162$ & $1153.38$ \\
\cmidrule(lr){1-14}
\multirow{3}{*}{$\mathcal{R}_{\mathrm{BW}}$} & $10^{-4}$ & $67.8$ & $0.56$ & $523$ & $0.95$ & $\mathbf{79.8}$ & $\mathbf{1.84}$ & $\mathbf{1{,}506}$ & $\mathbf{1.76}$ & $94.2$ & $1.10$ & $2{,}089$ & $3.81$ \\
 & $10^{-3}$ & \cellcolor{gray!20}$34.6$ & \cellcolor{gray!20}$0.08$ & \cellcolor{gray!20}$1$ & \cellcolor{gray!20}$0.02$ & \cellcolor{gray!20}$43.2$ & \cellcolor{gray!20}$0.32$ & \cellcolor{gray!20}$1$ & \cellcolor{gray!20}$0.05$ & $49.6$ & $0.13$ & $1$ & $\mathbf{0.05}$ \\
 & $10^{-2}$ & \cellcolor{gray!20}$16.3$ & \cellcolor{gray!20}$0.00$ & \cellcolor{gray!20}$0$ & \cellcolor{gray!20}$0.00$ & \cellcolor{gray!20}$16.8$ & \cellcolor{gray!20}$0.00$ & \cellcolor{gray!20}$1$ & \cellcolor{gray!20}$0.00$ & \cellcolor{gray!20}$22.6$ & \cellcolor{gray!20}$0.00$ & \cellcolor{gray!20}$0$ & \cellcolor{gray!20}$0.01$ \\
\cmidrule(lr){1-14}
\multirow{3}{*}{$\mathcal{R}_{\mathrm{SN}}$} & $10^{-4}$ & $73.9$ & $1.03$ & $1{,}379$ & $1.62$ & $86.8$ & $1.99$ & $1{,}950$ & $2.30$ & $107.7$ & $1.93$ & $34{,}844$ & $25.64$ \\
 & $10^{-3}$ & $\mathbf{43.5}$ & $0.14$ & $\mathbf{2}$ & $\mathbf{0.06}$ & \cellcolor{gray!20}$54.4$ & \cellcolor{gray!20}$0.60$ & \cellcolor{gray!20}$46$ & \cellcolor{gray!20}$0.20$ & $64.8$ & $0.18$ & $103$ & $0.35$ \\
 & $10^{-2}$ & \cellcolor{gray!20}$18.6$ & \cellcolor{gray!20}$0.02$ & \cellcolor{gray!20}$1$ & \cellcolor{gray!20}$0.01$ & \cellcolor{gray!20}$21.1$ & \cellcolor{gray!20}$0.01$ & \cellcolor{gray!20}$1$ & \cellcolor{gray!20}$0.01$ & \cellcolor{gray!20}$29.1$ & \cellcolor{gray!20}$0.01$ & \cellcolor{gray!20}$1$ & \cellcolor{gray!20}$0.02$ \\
\cmidrule(lr){1-14}
\multirow{3}{*}{$\mathcal{R}_{\mathrm{SN2}}$} & $10^{-4}$ & $124.3$ & $358.06$ & $2{,}667{,}570$ & $1439.55$ & $134.4$ & $24.52$ & $1{,}201{,}306$ & $728.03$ & $246.8$ & $1732.69$ & $>591{,}537$ & $>1800$ \\
 & $10^{-3}$ & $123.2$ & $361.40$ & $2{,}899{,}113$ & $1401.15$ & $\mathbf{117.6}$ & $19.23$ & $744{,}149$ & $377.74$ & $243.7$ & $1809.90$ & $>503{,}628$ & $>1800$ \\
 & $10^{-2}$ & $\mathbf{122.1}$ & $349.86$ & $2{,}418{,}068$ & $1253.44$ & \cellcolor{gray!20}$119.8$ & \cellcolor{gray!20}$21.80$ & \cellcolor{gray!20}$356{,}002$ & \cellcolor{gray!20}$221.30$ & $240.5$ & $1475.18$ & $>637{,}046$ & $>1800$ \\
\cmidrule(lr){1-14}
\multirow{3}{*}{$\mathcal{R}_{\mathrm{LP}}$} & $10^{-4}$ & $125.0$ & $0.00$ & $13{,}136$ & $8.33$ & \cellcolor{gray!20}$149.9$ & \cellcolor{gray!20}$0.00$ & \cellcolor{gray!20}$13{,}686$ & \cellcolor{gray!20}$9.17$ & $250.0$ & $\mathbf{0.00}$ & $10{,}110$ & $20.54$ \\
 & $10^{-3}$ & $124.9$ & $\mathbf{0.00}$ & $96{,}528$ & $41.03$ & \cellcolor{gray!20}$149.9$ & \cellcolor{gray!20}$0.00$ & \cellcolor{gray!20}$13{,}889$ & \cellcolor{gray!20}$6.00$ & $249.8$ & $0.00$ & $200{,}495$ & $231.59$ \\
 & $10^{-2}$ & $124.9$ & $\mathbf{0.00}$ & $\mathbf{614{,}044}$ & $\mathbf{224.40}$ & \cellcolor{gray!20}$149.8$ & \cellcolor{gray!20}$0.00$ & \cellcolor{gray!20}$48{,}391$ & \cellcolor{gray!20}$30.20$ & $249.8$ & $\mathbf{0.00}$ & $\mathbf{5{,}465}$ & $\mathbf{12.43}$ \\
\cmidrule(lr){1-14}
\multirow{3}{*}{$\mathcal{R}_{\mathrm{BW}}+\mathcal{R}_{\mathrm{LP}}$} & $10^{-4}$ & $\mathbf{66.2}$ & $\mathbf{0.01}$ & $\mathbf{5}$ & $\mathbf{0.14}$ & \cellcolor{gray!20}$71.9$ & \cellcolor{gray!20}$0.04$ & \cellcolor{gray!20}$3$ & \cellcolor{gray!20}$0.31$ & $\mathbf{87.6}$ & $0.01$ & $\mathbf{155}$ & $\mathbf{0.52}$ \\
 & $10^{-3}$ & \cellcolor{gray!20}$30.9$ & \cellcolor{gray!20}$0.00$ & \cellcolor{gray!20}$1$ & \cellcolor{gray!20}$0.01$ & \cellcolor{gray!20}$34.9$ & \cellcolor{gray!20}$0.00$ & \cellcolor{gray!20}$1$ & \cellcolor{gray!20}$0.04$ & $\mathbf{45.0}$ & $\mathbf{0.00}$ & $\mathbf{1}$ & $0.06$ \\
 & $10^{-2}$ & \cellcolor{gray!20}$13.0$ & \cellcolor{gray!20}$0.00$ & \cellcolor{gray!20}$1$ & \cellcolor{gray!20}$0.00$ & \cellcolor{gray!20}$14.4$ & \cellcolor{gray!20}$0.00$ & \cellcolor{gray!20}$1$ & \cellcolor{gray!20}$0.01$ & $\mathbf{31.0}$ & $0.60$ & $18{,}290$ & $90.03$ \\
\botrule
\end{tabular}
\end{sidewaystable}

\clearpage

\subsection{Optimization over Quantile Neural Networks}

The previous section studied the proposed regularizers on single-output surrogate models, where the downstream MILP simply minimizes the predicted output.
However, many practical applications involve more complicated settings, e.g., surrogate models can have multiple outputs and the downstream optimization problem can involve a more complex objective. 
To encompass some of these elements, we consider quantile neural networks (QNNs) applied to two-stage stochastic programming (2SP), an important setting in which QNN surrogates have been used to approximate the distribution of the second-stage `recourse' function~\cite{alcantara2025quantile,ghilardi2025integrated}. 
QNNs are multi-output models that predict specific quantiles of a target distribution, enabling them to estimate uncertainty and produce prediction intervals instead of only point predictions.

We consider the capacitated facility location problem (CFLP), a classical 2SP in which binary first-stage decisions $y \in \{0,1\}^{n_f}$ determine which of $n_f$ facilities to open, and the second-stage recourse allocates facility capacity to $n_c$ customer demands that are revealed as random scenarios $\xi$. 
\citet{patel2022neur2sp} study the problem using standard NN surrogates for the second-stage objective, and \citet{alcantara2025quantile} use QNN surrogates to enable risk-aware, distributional modeling. 
Interestingly, \citet{liu2025icnn} train input-convex NN surrogates to accelerate the downstream 2SP problem, which can then be formulated as an LP. 
Following the setup of~\citet{alcantara2025quantile}, we train a QNN surrogate $f_\theta : \{0,1\}^{n_f} \to \R^K$ that, given a first-stage decision $y$, predicts $K = 50$ quantiles of the second-stage cost distribution at equally spaced intervals. 
The QNN is trained by minimizing the pinball (quantile regression) loss:
\begin{equation}\label{eq:pinball}
  \mathcal{L}_{\mathrm{pinball}}(\theta)
  = \frac{1}{N K} \sum_{i=1}^{N} \sum_{k=1}^{K}
    \max\bigl\{\tau_k (v_i - f_{\theta,k}(y_i)),\;
               (\tau_k - 1)(v_i - f_{\theta,k}(y_i))\bigr\},
\end{equation}
where $v_i$ is the realized second-stage cost for the $i$-th training sample, generated by solving the recourse problem for a random demand scenario.

Once trained, the QNN surrogate replaces the expensive recourse computation in the first-stage optimization and enables distributional predictions such as Conditional Value-at-Risk (CVaR).
The resulting 2SP is formulated as a MILP that minimizes a mean and/or CVaR-based objective over the predicted quantile outputs:
\begin{equation}\label{eq:2sp-obj}
  \min_{y \in \{0,1\}^{n_f}} \;
  c_f^\top y
  \;+\;
  (1-\lambda_\mathrm{2SP})\,\frac{1}{K}\sum_{k=1}^{K} f_{\theta,k}(y)
  \;+\;
  \frac{\lambda_\mathrm{2SP}}{|\mathcal{T}|}\sum_{k \in \mathcal{T}} f_{\theta,k}(y),
\end{equation}
where $c_f$ is the vector of first-stage costs,
$\lambda_\mathrm{2SP}$ is the risk-aversion parameter,
$\mathcal{T} = \{k : \tau_k \ge \alpha\}$ is the set of tail quantile indices,
and $\alpha$ is the CVaR confidence level.
This formulation includes both binary decision variables and the MILP encoding of the trained QNN, making tractability of the overall problem highly dependent on the structural properties of the surrogate model. 
We refer the reader to~\citet{alcantara2025quantile} for further details on the QNN-based 2SP framework.

\subsubsection{Training Results}

We study the \texttt{CFLP\_50\_50} instance ($n_f = n_c = 50$), following the instance generation procedure of~\citet{patel2022neur2sp}, which is based in turn on~\citet{cornuejols1991comparison}. 
We also consider an extended \texttt{CFLP\_75\_75} instance ($n_f = n_c = 75$), generated using the same procedure. 
The training dataset for each consists of 20{,}000 samples, with samples generated by drawing a random first-stage decision $y$ and solving the second-stage recourse for a random demand scenario. 
For the \texttt{CFLP\_50\_50} and \texttt{CFLP\_75\_75} problems, 250 and 570 samples respectively reached the 600\,s time limit, but feasible solutions were found for all of them. 
Data are split 80/20 into training and validation sets, normalized, and models are trained for 200 epochs with Adam.

We consider three architectures with $\{2, 3, 5\}$ hidden layers of 25 neurons each, denoted respectively as \texttt{X-25-25-50}, \texttt{X-25-25-25-50}, \texttt{X-25-25-25-25-25-50}. The input dimension equals $n_f$, and output dimensions is 50, corresponding to 50 quantiles. 
Each configuration is trained over 20 random seeds and tested in downstream MILPs as in~\eqref{eq:2sp-obj}. 
We omit the $\mathcal{R}_\mathrm{SN2}$ regularizer for brevity, as it failed to produce tractable models in nearly all of the settings above, but reintroduce the L2 regularizer, as its performance in multi-output settings has not been studied. 
The multi-output LP relaxation gap regularizer $\mathcal{R}_{\mathrm{LP}}$ uses random projections with non-negative directions, consistent with the non-negative weights in the downstream objective. 
The computational overheads of the proposed regularizers are independent of the specific training data; we observe costs largely similar to what was reported in Table~\ref{tab:train_time_ratios} for the benchmark functions. 

Tables~\ref{tab:cflp_50_50_pinball}--\ref{tab:cflp_75_75_pinball} report the test pinball loss (scaled $\times 10^2$) for each regularizer and architecture combination.
The unregularized baseline achieves a pinball loss of approximately $3.0$ and $2.5$ for the two problems respectively (scaled) across all architectures.
The proposed regularizers $\mathcal{R}_{\mathrm{BW}}$ and $\mathcal{R}_{\mathrm{SN}}$ preserve or even slightly improve prediction quality at all regularization strengths considered; on both problems $\mathcal{R}_{\mathrm{BW}}$ at $\lambda = 10^{-2}$ achieves the lowest pinball loss across architectures. 
In contrast, the shrinkage regularizers again suffer severe prediction degradation at higher regularization strengths, supporting our observation that these regularizers must be tuned and deployed more cautiously in practice. 
For example, $\mathcal{R}_{\mathrm{L1}}$ at $\lambda = 10^{-2}$ inflates the pinball loss by an order of magnitude (to $30$--$35$ on both problems), indicating that the network has collapsed to a near-constant function. 
$\mathcal{R}_{\mathrm{L2}}$ exhibits similar degradation at $\lambda = 10^{-2}$.
Any downstream MILP tractability ``improvements'' at these settings are therefore not attributable to the regularizer, but rather to the trivialization of the surrogate model. 

In this setting a trained QNN surrogate is usable across downstream instances, e.g., \citet{alcantara2025quantile} perform sensitivity studies across various levels of $\lambda_\mathrm{2SP}$ and $\alpha$, while \citet{ghilardi2025integrated} consider varying first-stage costs $c_f$. 
Given the lack of an obvious single objective, we consider the pinball loss~\eqref{eq:pinball} as a general metric of model decision quality across downstream optimization tasks: rows where the test pinball loss exceeds the baseline by more than 10\% are shaded in grey in Tables~\ref{tab:cflp_50_50_pinball}--\ref{tab:cflp_75_75_pinball}. Moreover, they are excluded from best-value comparisons in Tables~\ref{tab:cflp_50_50}--\ref{tab:cflp_75_75}. 

\begin{table}
\caption{Pinball loss (mean $\pm$ std over seeds, scaled $\times 10^2$) on \texttt{cflp\_50\_50} for each regularizer and architecture. Values are computed on the held-out test set. \textbf{Bold} indicates the lowest pinball loss within each $\lambda$ level per architecture, excluding shaded rows. \colorbox{gray!20}{Shaded} entries exceed the unregularized baseline by more than 10\%, indicating that the regularizer has degraded predictive quality.}
\label{tab:cflp_50_50_pinball}
\centering
\begin{tabular}{ll  c  c  c}
\toprule
regularizer & $\lambda$ & \multicolumn{1}{c}{\texttt{50-25-25-50}} & \multicolumn{1}{c}{\texttt{50-25-25-25-50}} & \multicolumn{1}{c}{\texttt{50-25-25-25-25-25-50}} \\
 & & ($\times 10^{2}$) & ($\times 10^{2}$) & ($\times 10^{2}$) \\
\midrule
None & --- & $3.03 \pm 0.02$ & $3.02 \pm 0.03$ & $3.06 \pm 0.00$ \\
\midrule
\multirow{3}{*}{$\mathcal{R}_{\mathrm{L1}}$} & $10^{-4}$ & $\mathbf{2.94 \pm 0.01}$ & $\mathbf{2.89 \pm 0.01}$ & $\mathbf{2.87 \pm 0.01}$ \\
 & $10^{-3}$ & \cellcolor{gray!20}$3.43 \pm 0.14$ & $3.26 \pm 0.15$ & $3.20 \pm 0.15$ \\
 & $10^{-2}$ & \cellcolor{gray!20}$30.77 \pm 6.80$ & \cellcolor{gray!20}$32.54 \pm 7.27$ & \cellcolor{gray!20}$34.60 \pm 7.07$ \\
\cmidrule(lr){1-5}
\multirow{3}{*}{$\mathcal{R}_{\mathrm{L2}}$} & $10^{-4}$ & $2.99 \pm 0.02$ & $3.00 \pm 0.02$ & $2.97 \pm 0.02$ \\
 & $10^{-3}$ & $3.15 \pm 0.02$ & $3.06 \pm 0.01$ & $3.02 \pm 0.01$ \\
 & $10^{-2}$ & \cellcolor{gray!20}$22.82 \pm 3.54$ & \cellcolor{gray!20}$14.38 \pm 9.06$ & \cellcolor{gray!20}$4.77 \pm 0.02$ \\
\cmidrule(lr){1-5}
\multirow{3}{*}{$\mathcal{R}_{\mathrm{BW}}$} & $10^{-4}$ & $3.01 \pm 0.02$ & $3.00 \pm 0.03$ & $2.98 \pm 0.02$ \\
 & $10^{-3}$ & $\mathbf{2.97 \pm 0.02}$ & $\mathbf{2.94 \pm 0.02}$ & $2.93 \pm 0.02$ \\
 & $10^{-2}$ & $\mathbf{2.92 \pm 0.01}$ & $\mathbf{2.88 \pm 0.01}$ & $2.86 \pm 0.01$ \\
\cmidrule(lr){1-5}
\multirow{3}{*}{$\mathcal{R}_{\mathrm{SN}}$} & $10^{-4}$ & $3.02 \pm 0.02$ & $3.01 \pm 0.03$ & $2.99 \pm 0.03$ \\
 & $10^{-3}$ & $3.01 \pm 0.02$ & $2.99 \pm 0.02$ & $2.95 \pm 0.02$ \\
 & $10^{-2}$ & $2.93 \pm 0.02$ & $2.90 \pm 0.02$ & $\mathbf{2.86 \pm 0.01}$ \\
\cmidrule(lr){1-5}
\multirow{3}{*}{$\mathcal{R}_{\mathrm{LP}}$} & $10^{-4}$ & $3.04 \pm 0.02$ & $3.09 \pm 0.03$ & $3.07 \pm 0.04$ \\
 & $10^{-3}$ & $3.08 \pm 0.03$ & $3.11 \pm 0.04$ & $3.33 \pm 0.11$ \\
 & $10^{-2}$ & $3.08 \pm 0.03$ & $3.12 \pm 0.05$ & \cellcolor{gray!20}$8.76 \pm 5.59$ \\
\cmidrule(lr){1-5}
\multirow{3}{*}{$\mathcal{R}_{\mathrm{BW}}+\mathcal{R}_{\mathrm{LP}}$} & $10^{-4}$ & $3.03 \pm 0.02$ & $3.04 \pm 0.02$ & $3.00 \pm 0.02$ \\
 & $10^{-3}$ & $2.99 \pm 0.02$ & $2.98 \pm 0.02$ & $\mathbf{2.91 \pm 0.03}$ \\
 & $10^{-2}$ & $2.92 \pm 0.01$ & \cellcolor{gray!20}$23.05 \pm 8.71$ & \cellcolor{gray!20}$23.18 \pm 10.39$ \\
\botrule
\end{tabular}
\end{table}

\begin{table}
\caption{Pinball loss (mean $\pm$ std over seeds, scaled $\times 10^2$) on \texttt{cflp\_75\_75} for each regularizer and architecture. Values are computed on the held-out test set. \textbf{Bold} indicates the lowest pinball loss within each $\lambda$ level per architecture, excluding shaded rows. \colorbox{gray!20}{Shaded} entries exceed the unregularized baseline by more than 10\%, indicating that the regularizer has degraded predictive quality.}
\label{tab:cflp_75_75_pinball}
\centering
\begin{tabular}{ll  c  c  c}
\toprule
regularizer & $\lambda$ & \multicolumn{1}{c}{\texttt{75-25-25-50}} & \multicolumn{1}{c}{\texttt{75-25-25-25-50}} & \multicolumn{1}{c}{\texttt{75-25-25-25-25-25-50}} \\
 & & ($\times 10^{2}$) & ($\times 10^{2}$) & ($\times 10^{2}$) \\
\midrule
None & --- & $2.53 \pm 0.04$ & $2.49 \pm 0.03$ & $2.47 \pm 0.02$ \\
\midrule
\multirow{3}{*}{$\mathcal{R}_{\mathrm{L1}}$} & $10^{-4}$ & $\mathbf{2.40 \pm 0.01}$ & $\mathbf{2.35 \pm 0.00}$ & $\mathbf{2.33 \pm 0.01}$ \\
 & $10^{-3}$ & \cellcolor{gray!20}$2.88 \pm 0.12$ & \cellcolor{gray!20}$2.75 \pm 0.18$ & $2.58 \pm 0.05$ \\
 & $10^{-2}$ & \cellcolor{gray!20}$30.51 \pm 6.23$ & \cellcolor{gray!20}$30.81 \pm 6.63$ & \cellcolor{gray!20}$34.69 \pm 6.43$ \\
\cmidrule(lr){1-5}
\multirow{3}{*}{$\mathcal{R}_{\mathrm{L2}}$} & $10^{-4}$ & $2.49 \pm 0.02$ & $2.48 \pm 0.02$ & $2.44 \pm 0.02$ \\
 & $10^{-3}$ & $2.60 \pm 0.02$ & $2.50 \pm 0.01$ & $2.46 \pm 0.01$ \\
 & $10^{-2}$ & \cellcolor{gray!20}$24.11 \pm 0.11$ & \cellcolor{gray!20}$22.10 \pm 5.75$ & \cellcolor{gray!20}$15.82 \pm 9.60$ \\
\cmidrule(lr){1-5}
\multirow{3}{*}{$\mathcal{R}_{\mathrm{BW}}$} & $10^{-4}$ & $2.51 \pm 0.03$ & $2.46 \pm 0.02$ & $2.43 \pm 0.02$ \\
 & $10^{-3}$ & $\mathbf{2.44 \pm 0.02}$ & $\mathbf{2.40 \pm 0.01}$ & $2.37 \pm 0.02$ \\
 & $10^{-2}$ & $\mathbf{2.37 \pm 0.01}$ & $\mathbf{2.34 \pm 0.01}$ & $2.33 \pm 0.01$ \\
\cmidrule(lr){1-5}
\multirow{3}{*}{$\mathcal{R}_{\mathrm{SN}}$} & $10^{-4}$ & $2.52 \pm 0.03$ & $2.48 \pm 0.03$ & $2.45 \pm 0.03$ \\
 & $10^{-3}$ & $2.50 \pm 0.03$ & $2.44 \pm 0.03$ & $2.40 \pm 0.02$ \\
 & $10^{-2}$ & $2.39 \pm 0.02$ & $2.34 \pm 0.02$ & $\mathbf{2.31 \pm 0.01}$ \\
\cmidrule(lr){1-5}
\multirow{3}{*}{$\mathcal{R}_{\mathrm{LP}}$} & $10^{-4}$ & $2.55 \pm 0.03$ & $2.56 \pm 0.02$ & $2.50 \pm 0.03$ \\
 & $10^{-3}$ & $2.61 \pm 0.04$ & $2.60 \pm 0.04$ & \cellcolor{gray!20}$2.79 \pm 0.04$ \\
 & $10^{-2}$ & $2.60 \pm 0.06$ & $2.65 \pm 0.05$ & \cellcolor{gray!20}$7.50 \pm 4.08$ \\
\cmidrule(lr){1-5}
\multirow{3}{*}{$\mathcal{R}_{\mathrm{BW}}+\mathcal{R}_{\mathrm{LP}}$} & $10^{-4}$ & $2.53 \pm 0.02$ & $2.50 \pm 0.03$ & $2.47 \pm 0.03$ \\
 & $10^{-3}$ & $2.47 \pm 0.02$ & $2.44 \pm 0.02$ & $\mathbf{2.36 \pm 0.05}$ \\
 & $10^{-2}$ & \cellcolor{gray!20}$3.55 \pm 5.23$ & \cellcolor{gray!20}$26.95 \pm 0.76$ & \cellcolor{gray!20}$28.69 \pm 0.75$ \\
\botrule
\end{tabular}
\end{table}

\subsubsection{Optimization Results}

Tables~\ref{tab:cflp_50_50}--\ref{tab:cflp_75_75} report the same four MILP tractability metrics for the 2SP formulations~\eqref{eq:2sp-obj}: unstable neuron count $|\mathcal{U}|$, LP relaxation gap, branch-and-bound nodes, and wall-clock MILP solve time. 
We arbitrarily select $\lambda_\mathrm{2SP} = 0.1$ for the risk-aversion parameter and $\alpha = 0.9$ is the CVaR confidence level.
The unregularized baseline again becomes increasingly intractable with network depth, with mean solve times in the \texttt{cflp\_50\_50} case study ranging from 2.3\,s (2 hidden layers) to 702\,s (5 hidden layers) and node counts growing from $5{,}235$ to $270{,}161$. 
These are magnified in the \texttt{cflp\_75\_75} case study to 4.6\,s (2 hidden layers) and 1395\,s (5 hidden layers), with node counts growing from $9{,}061$ to $1{,}799{,}004$, reflecting the more challenging response surface learned by the surrogate.

In these two settings, we again observe that the L1 regularizer can be effective at low weights ($\lambda=10^{-4}$), but must be deployed cautiously as it can quickly degrade model prediction performance. 
In contrast, the bound-width regularizer $\mathcal{R}_{\mathrm{BW}}$ improves downstream performance with less sensitivity to tuning. For example, at $\lambda = 10^{-2}$ $\mathcal{R}_{\mathrm{BW}}$ reduces MILP solve times by 1--3 orders of magnitude across all architectures while maintaining the lowest pinball loss in both problem settings (Tables~\ref{tab:cflp_50_50_pinball}--\ref{tab:cflp_75_75_pinball}).
On the deepest architectures, the MILP solution time drops several orders of magnitude, from 702\,s and 1395\,s, on the two problems respectively, to less than 1\,s, with unstable neurons reduced from 125 to approximately 20. 
The stability regularizer $\mathcal{R}_{\mathrm{SN}}$ at $\lambda = 10^{-2}$ achieves similar improvements without sensitivity to regularization weight tuning, reducing the 5-layer solve time to $\approx$0.35\,s with 25--30 unstable neurons, though we again observe its LP gap reduction is slightly less aggressive.

We again observe the LP-based regularizer $\mathcal{R}_{\mathrm{LP}}$ effectively reduces the LP gap but, as in the benchmark experiments, does not reduce the number of unstable neurons by itself. 
On the 2- and 3-layer architectures, this still yields meaningful acceleration of the downstream MILP (e.g., 0.56\,s and 1.47\,s vs.\ baselines of 2.3\,s and 18\,s in the \texttt{cflp\_50\_50} setting).
However, on the deepest architectures $\mathcal{R}_{\mathrm{LP}}$ alone is insufficient, matching our observations from the Ackley function above. Here, despite reducing the LP gap from $2.5 \times 10^6$ to $1.4 \times 10^4$ at $\lambda = 10^{-2}$ in the \texttt{cflp\_50\_50} setting), the network retains all 125 unstable neurons and still requires $>$100\,s to solve. 
Notably, $\mathcal{R}_{\mathrm{LP}}$ at $\lambda = 10^{-2}$ also degrades prediction quality on the 5-layer architecture (pinball loss $8.76$ vs.\ baseline $3.06$), perhaps reflecting the difficulty of the multi-output LP regularization at strong regularization strengths. 
The same trends can be seen in the larger problem. 
Alternatively, this could simply be an effect of the regularizer strength, as the shrinkage regularizers also degrade prediction quality at this weight. 

The combined regularizer $\mathcal{R}_{\mathrm{BW}} + \mathcal{R}_{\mathrm{LP}}$ at $\lambda = 10^{-3}$ achieves a particularly effective balance of the above effects in both problem settings. For the 5-layer surrogate model architecture, it reduces $|\mathcal{U}|$ from 125 to $>$50 at a weight of $\lambda=10^{-3}$, reduces the LP gap from $\mathcal{O}(10^6)$ to approximately $400$ on both problems, and produces a MILP solvable in under 1\,s without degrading (even improving) prediction quality.
Across almost all architectures the combined regularizer consistently matches or improves upon the individual components, confirming the complementarity strengths of targeting both neuron stability (via $\mathcal{R}_{\mathrm{BW}}$) and relaxation tightness (via $\mathcal{R}_{\mathrm{LP}}$).

\begin{figure}[ht]
\centering
    \includegraphics[width=0.9\textwidth]{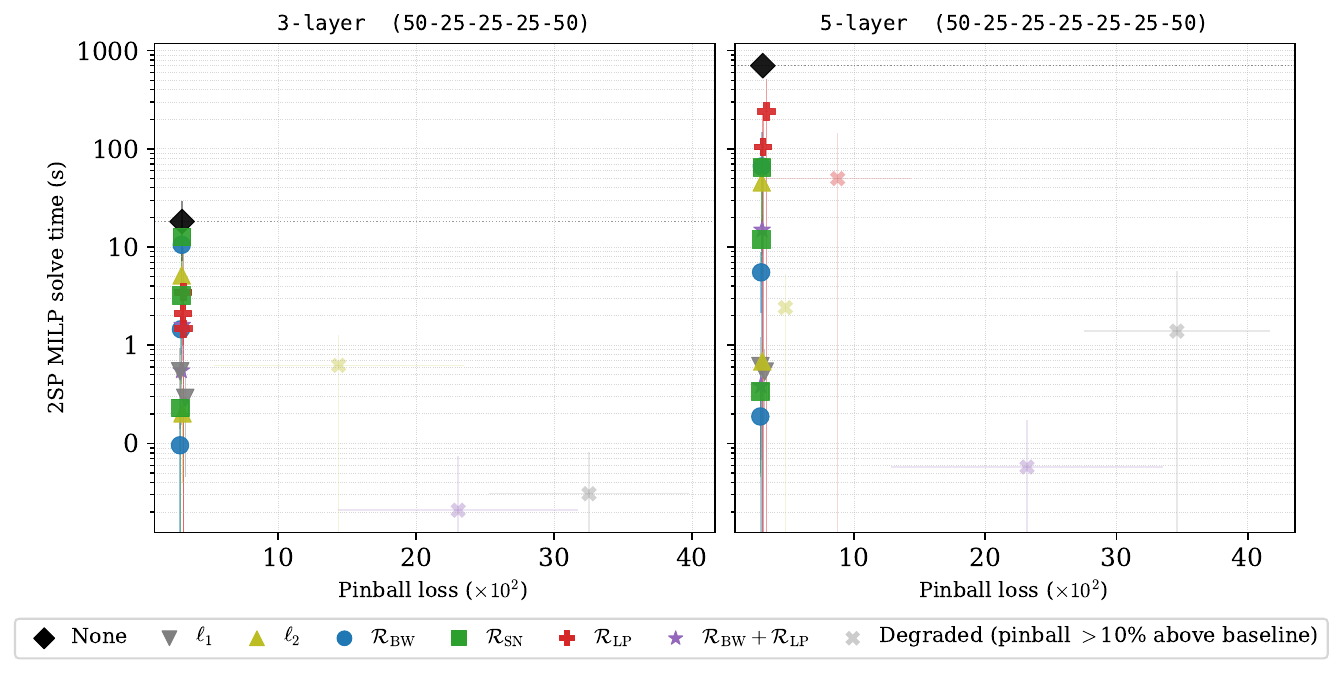}
    \caption{Tradeoff between solution time of downstream MILP performance and pinball loss for QNNs in stochastic programming setting. Each plotted marker shows mean performance for a different regularizer and weight.}
    \label{fig:pareto}
\end{figure}

\paragraph{Comparison at equal prediction quality}

The shading convention in Tables~\ref{tab:cflp_50_50_pinball}--\ref{tab:cflp_75_75_pinball} and Tables~\ref{tab:cflp_50_50}--\ref{tab:cflp_75_75} exposes a critical confound in na\"ive comparisons. 
As trends are largely consistent between the two problem settings, we focus our discussion on the \texttt{cflp\_50\_50} setting here. 
We observe that $\mathcal{R}_{\mathrm{L1}}$ at $\lambda = 10^{-2}$ achieves the fastest MILP solve times in raw numbers (0.03\,s), but this is only because the network has collapsed. In fact, Table~\ref{tab:cflp_50_50_pinball} shows the pinball loss is $10\times$ worse than the baseline.
At comparable prediction quality, e.g., comparing $\mathcal{R}_{\mathrm{L1}}$ at $\lambda = 10^{-4}$ (pinball $2.89$, MILP time 0.54\,s on the 3-layer model) against $\mathcal{R}_{\mathrm{BW}}$ at $\lambda = 10^{-2}$ (pinball $2.88$, MILP time 0.10\,s), the proposed relaxation-informed regularizer achieves a $5\times$ faster solve with $3\times$ fewer unstable neurons for a QNN surrogate model the same predictive quality.
This highlights the importance of accounting for prediction quality when evaluating MILP tractability improvements. 

Figure~\ref{fig:pareto} compares the pinball loss achieved by various regularizer configurations against the downstream MILP solve times in this setting, allowing us to visualize this tradeoff. 
Many points are clustered on the lefthand side of both plots (pinball loss close to the unnormalized baseline of $\approx$3). In this region of regularizers that maintain prediction quality, training with relaxation-aware regularizers $\mathcal{R}_{\mathrm{BW}}$, $\mathcal{R}_{\mathrm{SN}}$, and the combined regularizer sit at the bottom (lowest MILP solve times), forming the Pareto front. Improvements are most dramatic for the deeper QNN surrogate models. 
The shrinkage regularizers (and sometimes the combined regularizer) can produce model-collapse configurations where the regularizer dominates, resulting in a higher pinball loss values. These configurations are indicated using faded markers.

\begin{sidewaystable}[h]
\caption{Results on the \texttt{cflp\_50\_50} QNN surrogate 2SP across architectures and regularizers. Each regularizer family shows three rows for $\lambda \in \{10^{-4}, 10^{-3}, 10^{-2}\}$. $|\mathcal{U}|$: mean number of unstable neurons; LP gap: mean 2SP LP relaxation gap; MILP nodes/time: mean branch-and-bound nodes and wall-clock time (s) for the 2SP mean+CVaR MILP. \textbf{Bold} values indicate the best (lowest) result across all regularizers at each $\lambda$ level per architecture; ties are all bolded. \colorbox{gray!20}{Shaded} entries indicate regularizers whose test pinball loss exceeds the unregularized baseline by more than 10\%, reflecting degraded predictive quality; such configurations are excluded from best-value consideration.}
\label{tab:cflp_50_50}
\centering
\begin{tabular}{ll  cccc  cccc  cccc}
\toprule
\multirow{3}{*}{regularizer} & \multirow{3}{*}{$\lambda$} & \multicolumn{4}{c}{\texttt{50-25-25-50}} & \multicolumn{4}{c}{\texttt{50-25-25-25-50}} & \multicolumn{4}{c}{\texttt{50-25-25-25-25-25-50}} \\
\cmidrule(lr){3-6} \cmidrule(lr){7-10} \cmidrule(lr){11-14}
 & & $|\mathcal{U}|$ & LP gap & \multicolumn{2}{c}{MILP} & $|\mathcal{U}|$ & LP gap & \multicolumn{2}{c}{MILP} & $|\mathcal{U}|$ & LP gap & \multicolumn{2}{c}{MILP} \\
\cmidrule(lr){5-6} \cmidrule(lr){9-10} \cmidrule(lr){13-14}
 &  &  &  & nodes & time &  &  & nodes & time &  &  & nodes & time \\
\midrule
None & --- & $50.0$ & $44005.06$ & $5{,}235$ & $2.31$ & $75.0$ & $196857.98$ & $11{,}982$ & $18.16$ & $125.0$ & $2469369.64$ & $270{,}161$ & $701.74$ \\
\midrule
\multirow{3}{*}{$\mathcal{R}_{\mathrm{L1}}$} & $10^{-4}$ & $\mathbf{32.2}$ & $\mathbf{319.71}$ & $\mathbf{1}$ & $\mathbf{0.39}$ & $\mathbf{55.0}$ & $\mathbf{593.19}$ & $\mathbf{4}$ & $\mathbf{0.54}$ & $103.7$ & $\mathbf{7634.47}$ & $\mathbf{44}$ & $\mathbf{0.61}$ \\
 & $10^{-3}$ & \cellcolor{gray!20}$32.0$ & \cellcolor{gray!20}$110.63$ & \cellcolor{gray!20}$12$ & \cellcolor{gray!20}$0.29$ & $\mathbf{46.8}$ & $\mathbf{404.72}$ & $145$ & $0.29$ & $75.7$ & $829.68$ & $52$ & $0.54$ \\
 & $10^{-2}$ & \cellcolor{gray!20}$32.4$ & \cellcolor{gray!20}$37.42$ & \cellcolor{gray!20}$1$ & \cellcolor{gray!20}$0.03$ & \cellcolor{gray!20}$37.0$ & \cellcolor{gray!20}$102.94$ & \cellcolor{gray!20}$1$ & \cellcolor{gray!20}$0.03$ & \cellcolor{gray!20}$50.0$ & \cellcolor{gray!20}$222.49$ & \cellcolor{gray!20}$384$ & \cellcolor{gray!20}$1.39$ \\
\cmidrule(lr){1-14}
\multirow{3}{*}{$\mathcal{R}_{\mathrm{L2}}$} & $10^{-4}$ & $46.1$ & $11756.43$ & $456$ & $0.63$ & $70.9$ & $41684.35$ & $7{,}079$ & $5.14$ & $117.5$ & $237213.23$ & $36{,}561$ & $45.49$ \\
 & $10^{-3}$ & $40.2$ & $2326.55$ & $\mathbf{1}$ & $\mathbf{0.25}$ & $54.3$ & $6566.90$ & $\mathbf{1}$ & $\mathbf{0.20}$ & $79.8$ & $11557.11$ & $56$ & $0.68$ \\
 & $10^{-2}$ & \cellcolor{gray!20}$44.5$ & \cellcolor{gray!20}$744.48$ & \cellcolor{gray!20}$1$ & \cellcolor{gray!20}$0.06$ & \cellcolor{gray!20}$60.9$ & \cellcolor{gray!20}$1639.93$ & \cellcolor{gray!20}$313$ & \cellcolor{gray!20}$0.62$ & \cellcolor{gray!20}$97.1$ & \cellcolor{gray!20}$3470.08$ & \cellcolor{gray!20}$4{,}049$ & \cellcolor{gray!20}$2.41$ \\
\cmidrule(lr){1-14}
\multirow{3}{*}{$\mathcal{R}_{\mathrm{BW}}$} & $10^{-4}$ & $48.1$ & $26486.03$ & $3{,}621$ & $1.71$ & $69.8$ & $75976.64$ & $9{,}747$ & $10.53$ & $108.2$ & $217288.14$ & $30{,}437$ & $66.05$ \\
 & $10^{-3}$ & $38.9$ & $4776.56$ & $193$ & $0.49$ & $57.4$ & $17001.69$ & $1{,}645$ & $1.45$ & $77.2$ & $41766.83$ & $5{,}948$ & $5.52$ \\
 & $10^{-2}$ & $16.4$ & $207.46$ & $\mathbf{1}$ & $\mathbf{0.08}$ & $\mathbf{17.8}$ & $\mathbf{425.11}$ & $\mathbf{1}$ & $\mathbf{0.10}$ & $\mathbf{18.9}$ & $\mathbf{756.10}$ & $4$ & $\mathbf{0.19}$ \\
\cmidrule(lr){1-14}
\multirow{3}{*}{$\mathcal{R}_{\mathrm{SN}}$} & $10^{-4}$ & $46.9$ & $27429.07$ & $3{,}504$ & $1.43$ & $69.5$ & $97827.09$ & $9{,}981$ & $12.62$ & $107.6$ & $411523.37$ & $41{,}510$ & $64.14$ \\
 & $10^{-3}$ & $40.7$ & $9861.86$ & $1{,}458$ & $1.14$ & $59.0$ & $30793.00$ & $4{,}244$ & $3.22$ & $85.0$ & $84640.76$ & $7{,}524$ & $11.94$ \\
 & $10^{-2}$ & $21.3$ & $935.90$ & $\mathbf{1}$ & $0.17$ & $25.4$ & $1349.28$ & $13$ & $0.23$ & $29.9$ & $1201.37$ & $\mathbf{1}$ & $0.34$ \\
\cmidrule(lr){1-14}
\multirow{3}{*}{$\mathcal{R}_{\mathrm{LP}}$} & $10^{-4}$ & $49.8$ & $4410.45$ & $124$ & $0.65$ & $74.5$ & $18490.36$ & $2{,}325$ & $2.10$ & $125.0$ & $22936.09$ & $84{,}133$ & $104.38$ \\
 & $10^{-3}$ & $48.8$ & $673.95$ & $746$ & $0.60$ & $74.7$ & $1171.38$ & $5{,}972$ & $3.47$ & $125.0$ & $49816.20$ & $365{,}632$ & $239.22$ \\
 & $10^{-2}$ & $49.1$ & $347.71$ & $1{,}057$ & $0.56$ & $74.8$ & $522.99$ & $3{,}189$ & $1.47$ & \cellcolor{gray!20}$125.0$ & \cellcolor{gray!20}$14303.72$ & \cellcolor{gray!20}$64{,}183$ & \cellcolor{gray!20}$49.79$ \\
\cmidrule(lr){1-14}
\multirow{3}{*}{$\mathcal{R}_{\mathrm{BW}}+\mathcal{R}_{\mathrm{LP}}$} & $10^{-4}$ & $47.7$ & $2390.24$ & $26$ & $0.58$ & $68.6$ & $10001.84$ & $1{,}525$ & $1.60$ & $\mathbf{101.6}$ & $27172.28$ & $8{,}130$ & $14.94$ \\
 & $10^{-3}$ & $\mathbf{35.4}$ & $\mathbf{355.61}$ & $311$ & $0.52$ & $48.8$ & $429.38$ & $146$ & $0.55$ & $\mathbf{48.4}$ & $\mathbf{397.85}$ & $\mathbf{1}$ & $\mathbf{0.39}$ \\
 & $10^{-2}$ & $\mathbf{11.8}$ & $\mathbf{78.19}$ & $\mathbf{1}$ & $0.15$ & \cellcolor{gray!20}$8.2$ & \cellcolor{gray!20}$83.49$ & \cellcolor{gray!20}$1$ & \cellcolor{gray!20}$0.02$ & \cellcolor{gray!20}$22.9$ & \cellcolor{gray!20}$182.73$ & \cellcolor{gray!20}$1$ & \cellcolor{gray!20}$0.06$ \\
\botrule
\end{tabular}
\end{sidewaystable}

\begin{sidewaystable}[h]
\caption{Results on the \texttt{cflp\_75\_75} QNN surrogate 2SP across architectures and regularizers. Each regularizer family shows three rows for $\lambda \in \{10^{-4}, 10^{-3}, 10^{-2}\}$. $|\mathcal{U}|$: mean number of unstable neurons; LP gap: mean 2SP LP relaxation gap; MILP nodes/time: mean branch-and-bound nodes and wall-clock time (s) for the 2SP mean+CVaR MILP. \textbf{Bold} values indicate the best (lowest) result across all regularizers at each $\lambda$ level per architecture; ties are all bolded. \colorbox{gray!20}{Shaded} entries indicate regularizers whose test pinball loss exceeds the unregularized baseline by more than 10\%, reflecting degraded predictive quality; such configurations are excluded from best-value consideration.}
\label{tab:cflp_75_75}
\centering
\begin{tabular}{ll  cccc  cccc  cccc}
\toprule
\multirow{3}{*}{regularizer} & \multirow{3}{*}{$\lambda$} & \multicolumn{4}{c}{\texttt{75-25-25-50}} & \multicolumn{4}{c}{\texttt{75-25-25-25-50}} & \multicolumn{4}{c}{\texttt{75-25-25-25-25-25-50}} \\
\cmidrule(lr){3-6} \cmidrule(lr){7-10} \cmidrule(lr){11-14}
 & & $|\mathcal{U}|$ & LP gap & \multicolumn{2}{c}{MILP} & $|\mathcal{U}|$ & LP gap & \multicolumn{2}{c}{MILP} & $|\mathcal{U}|$ & LP gap & \multicolumn{2}{c}{MILP} \\
\cmidrule(lr){5-6} \cmidrule(lr){9-10} \cmidrule(lr){13-14}
 &  &  &  & nodes & time &  &  & nodes & time &  &  & nodes & time \\
\midrule
None & --- & $50.0$ & $76961.06$ & $9{,}061$ & $4.64$ & $75.0$ & $332696.67$ & $13{,}755$ & $20.16$ & $125.0$ & $3562051.76$ & $1{,}799{,}004$ & $1395.20$ \\
\midrule
\multirow{3}{*}{$\mathcal{R}_{\mathrm{L1}}$} & $10^{-4}$ & $\mathbf{32.2}$ & $\mathbf{1131.16}$ & $\mathbf{84}$ & $\mathbf{0.27}$ & $\mathbf{53.9}$ & $\mathbf{2322.02}$ & $\mathbf{121}$ & $\mathbf{0.48}$ & $105.0$ & $\mathbf{12883.05}$ & $\mathbf{500}$ & $\mathbf{1.05}$ \\
 & $10^{-3}$ & \cellcolor{gray!20}$34.2$ & \cellcolor{gray!20}$193.47$ & \cellcolor{gray!20}$1$ & \cellcolor{gray!20}$0.34$ & \cellcolor{gray!20}$47.9$ & \cellcolor{gray!20}$1211.57$ & \cellcolor{gray!20}$316$ & \cellcolor{gray!20}$0.50$ & $76.8$ & $2946.37$ & $396$ & $0.81$ \\
 & $10^{-2}$ & \cellcolor{gray!20}$31.2$ & \cellcolor{gray!20}$0.00$ & \cellcolor{gray!20}$1$ & \cellcolor{gray!20}$0.03$ & \cellcolor{gray!20}$38.2$ & \cellcolor{gray!20}$41.94$ & \cellcolor{gray!20}$1$ & \cellcolor{gray!20}$0.03$ & \cellcolor{gray!20}$27.2$ & \cellcolor{gray!20}$0.00$ & \cellcolor{gray!20}$1$ & \cellcolor{gray!20}$0.02$ \\
\cmidrule(lr){1-14}
\multirow{3}{*}{$\mathcal{R}_{\mathrm{L2}}$} & $10^{-4}$ & $48.4$ & $26889.45$ & $1{,}904$ & $1.27$ & $72.9$ & $85918.82$ & $9{,}295$ & $8.19$ & $120.0$ & $495568.32$ & $53{,}254$ & $62.58$ \\
 & $10^{-3}$ & $39.7$ & $4500.89$ & $\mathbf{1}$ & $\mathbf{0.28}$ & $55.4$ & $10438.69$ & $\mathbf{1}$ & $\mathbf{0.24}$ & $83.5$ & $25823.26$ & $967$ & $1.34$ \\
 & $10^{-2}$ & \cellcolor{gray!20}$44.9$ & \cellcolor{gray!20}$1245.53$ & \cellcolor{gray!20}$1$ & \cellcolor{gray!20}$0.09$ & \cellcolor{gray!20}$63.9$ & \cellcolor{gray!20}$1895.88$ & \cellcolor{gray!20}$147$ & \cellcolor{gray!20}$0.24$ & \cellcolor{gray!20}$92.8$ & \cellcolor{gray!20}$2950.60$ & \cellcolor{gray!20}$862$ & \cellcolor{gray!20}$0.92$ \\
\cmidrule(lr){1-14}
\multirow{3}{*}{$\mathcal{R}_{\mathrm{BW}}$} & $10^{-4}$ & $48.5$ & $46063.34$ & $8{,}071$ & $4.28$ & $71.0$ & $140223.45$ & $9{,}534$ & $11.02$ & $107.6$ & $358786.38$ & $71{,}025$ & $91.14$ \\
 & $10^{-3}$ & $40.8$ & $10063.15$ & $869$ & $0.84$ & $57.0$ & $29386.24$ & $3{,}820$ & $2.06$ & $69.5$ & $59873.45$ & $5{,}001$ & $4.95$ \\
 & $10^{-2}$ & $\mathbf{15.6}$ & $1342.98$ & $\mathbf{7}$ & $\mathbf{0.17}$ & $\mathbf{16.2}$ & $1549.56$ & $\mathbf{1}$ & $\mathbf{0.20}$ & $\mathbf{20.9}$ & $\mathbf{1689.45}$ & $\mathbf{1}$ & $\mathbf{0.21}$ \\
\cmidrule(lr){1-14}
\multirow{3}{*}{$\mathcal{R}_{\mathrm{SN}}$} & $10^{-4}$ & $48.1$ & $54656.72$ & $8{,}502$ & $3.67$ & $68.5$ & $177112.92$ & $9{,}151$ & $11.40$ & $107.0$ & $649873.14$ & $82{,}863$ & $135.28$ \\
 & $10^{-3}$ & $41.6$ & $24314.73$ & $3{,}984$ & $2.18$ & $57.7$ & $55394.37$ & $7{,}255$ & $4.54$ & $81.2$ & $118008.83$ & $6{,}244$ & $16.25$ \\
 & $10^{-2}$ & $20.1$ & $1697.08$ & $395$ & $0.36$ & $23.2$ & $2063.29$ & $45$ & $0.38$ & $26.9$ & $2162.87$ & $1$ & $0.35$ \\
\cmidrule(lr){1-14}
\multirow{3}{*}{$\mathcal{R}_{\mathrm{LP}}$} & $10^{-4}$ & $49.8$ & $7710.15$ & $1{,}626$ & $1.20$ & $74.9$ & $34157.12$ & $6{,}657$ & $5.73$ & $125.0$ & $30153.51$ & $146{,}887$ & $134.11$ \\
 & $10^{-3}$ & $48.6$ & $1132.86$ & $4{,}988$ & $2.15$ & $74.7$ & $2150.41$ & $50{,}828$ & $16.63$ & \cellcolor{gray!20}$125.0$ & \cellcolor{gray!20}$27486.72$ & \cellcolor{gray!20}$285{,}447$ & \cellcolor{gray!20}$260.60$ \\
 & $10^{-2}$ & $49.5$ & $\mathbf{363.43}$ & $11{,}305$ & $1.91$ & $74.9$ & $\mathbf{1112.30}$ & $29{,}829$ & $8.05$ & \cellcolor{gray!20}$125.0$ & \cellcolor{gray!20}$23664.74$ & \cellcolor{gray!20}$162{,}029$ & \cellcolor{gray!20}$109.02$ \\
\cmidrule(lr){1-14}
\multirow{3}{*}{$\mathcal{R}_{\mathrm{BW}}+\mathcal{R}_{\mathrm{LP}}$} & $10^{-4}$ & $47.6$ & $4314.56$ & $1{,}324$ & $1.13$ & $68.5$ & $20778.39$ & $3{,}851$ & $2.63$ & $\mathbf{100.8}$ & $37881.30$ & $7{,}448$ & $13.69$ \\
 & $10^{-3}$ & $\mathbf{35.9}$ & $\mathbf{556.51}$ & $1{,}262$ & $0.90$ & $\mathbf{50.1}$ & $\mathbf{720.99}$ & $491$ & $0.86$ & $\mathbf{43.0}$ & $\mathbf{432.37}$ & $\mathbf{8}$ & $\mathbf{0.33}$ \\
 & $10^{-2}$ & \cellcolor{gray!20}$10.1$ & \cellcolor{gray!20}$67.38$ & \cellcolor{gray!20}$43$ & \cellcolor{gray!20}$0.23$ & \cellcolor{gray!20}$8.2$ & \cellcolor{gray!20}$0.00$ & \cellcolor{gray!20}$0$ & \cellcolor{gray!20}$0.00$ & \cellcolor{gray!20}$16.0$ & \cellcolor{gray!20}$0.00$ & \cellcolor{gray!20}$0$ & \cellcolor{gray!20}$0.01$ \\
\botrule
\end{tabular}
\end{sidewaystable}

\paragraph{Limitations.}
This work focuses on the big-M MILP formulation of ReLU networks; tailored regularizers for other formulations (e.g., ideal formulations~\cite{anderson2020strong} or partition-based formulations~\cite{tsay2021partition}) and other activation functions remain to be established.
The bound-based regularizers $\mathcal{R}_{\mathrm{BW}}$ and $\mathcal{R}_{\mathrm{SN}}$ rely on IBP, which can produce increasingly loose bounds in deeper networks due to the recursive over-approximation discussed in Section~\ref{sec:ibp}. 
The LP-based regularizer $\mathcal{R}_{\mathrm{LP}}$ incurs a non-negligible training overhead (5--20$\times$, Table~\ref{tab:train_time_ratios}), which scales with network size and limits its practicality for very large models, though this is less relevant for the surrogate model setting and cost is incurred only once during training.
Furthermore, $\mathcal{R}_{\mathrm{LP}}$ targets pointwise relaxation gaps, and global tightening is not guaranteed.

\section{Conclusions}
\label{sec:conclusions}
This paper introduced a family of relaxation-informed regularization strategies that target the downstream tractability of neural network surrogate models during training. 
Two bound propagation-based regularizers, $\mathcal{R}_{\mathrm{BW}}$ (bound-width) and $\mathcal{R}_{\mathrm{SN}}$ (stable-neuron), penalize the big-M constants and the number of unstable neurons, respectively, through automatic differentiation of the bounds computation.
An LP relaxation gap regularizer $\mathcal{R}_{\mathrm{LP}}$ directly targets the continuous relaxation tightness, with gradients derived from dual variables via the envelope theorem for parametric linear programs.
Proposition~\ref{prop:combined} shows that combining $\mathcal{R}_{\mathrm{BW}}$ and $\mathcal{R}_{\mathrm{LP}}$ approximates the full total derivative of the LP gap with respect to the network parameters, capturing both the direct constraint sensitivity and the indirect big-M sensitivity.

Computational experiments on benchmark surrogate functions demonstrated that the proposed regularizers can reduce MILP solve times by up to four orders of magnitude while maintaining competitive modeling performance.
On the two-stage stochastic programming case study with quantile neural networks, training with the bound-width regularizer $\mathcal{R}_{\mathrm{BW}}$ could reduce the MILP solve time on the deepest architecture from over 700\,s to under 1\,s without degrading prediction quality, and the combined regularizer achieved similar acceleration, with complementary reductions in both unstable neuron count and LP gap.
These results highlight an important practical consideration: classical shrinkage regularizers ($\mathcal{R}_{\mathrm{L1}}$, $\mathcal{R}_{\mathrm{L2}}$) can improve tractability at strong regularization weights, but this often requires tuning to avoid a collapse in predictive quality (rather than genuine improvement of the MILP formulation).

Several directions for future work are worth noting.
The LP-based regularizer incurs a training overhead of approximately 5--20$\times$ due to repeated LP solves; exploiting GPU-based LP solvers~\cite{applegate2021practical,applegate2023faster} could substantially reduce this cost and integrate with neural network training pipelines.
Moreover, incorporating tighter bound propagation schemes (e.g., optimization-based bound tightening) as differentiable regularizers could yield further improvements, particularly for deeper architectures.
Finally, extending the framework to other activation functions and to more complex downstream formulations beyond the big-M MILP remains an open direction.

\section*{Acknowledgements}
Funding from the BASF/Royal Academy of Engineering Senior Research Fellowship is gratefully acknowledged.

\section*{Data Availability Statement}
No new data were created for synthetic benchmarks. Data for stochastic programming applications were generated following \texttt{https://github.com/khalil-research/Neur2SP}.

\clearpage
\bibliography{sn-bibliography}


\begin{thebibliography}{74}
\ifx \bisbn   \undefined \def \bisbn  #1{ISBN #1}\fi
\ifx \binits  \undefined \def \binits#1{#1}\fi
\ifx \bauthor  \undefined \def \bauthor#1{#1}\fi
\ifx \batitle  \undefined \def \batitle#1{#1}\fi
\ifx \bjtitle  \undefined \def \bjtitle#1{#1}\fi
\ifx \bvolume  \undefined \def \bvolume#1{\textbf{#1}}\fi
\ifx \byear  \undefined \def \byear#1{#1}\fi
\ifx \bissue  \undefined \def \bissue#1{#1}\fi
\ifx \bfpage  \undefined \def \bfpage#1{#1}\fi
\ifx \blpage  \undefined \def \blpage #1{#1}\fi
\ifx \burl  \undefined \def \burl#1{\textsf{#1}}\fi
\ifx \doiurl  \undefined \def \doiurl#1{\url{https://doi.org/#1}}\fi
\ifx \betal  \undefined \def \betal{\textit{et al.}}\fi
\ifx \binstitute  \undefined \def \binstitute#1{#1}\fi
\ifx \binstitutionaled  \undefined \def \binstitutionaled#1{#1}\fi
\ifx \bctitle  \undefined \def \bctitle#1{#1}\fi
\ifx \beditor  \undefined \def \beditor#1{#1}\fi
\ifx \bpublisher  \undefined \def \bpublisher#1{#1}\fi
\ifx \bbtitle  \undefined \def \bbtitle#1{#1}\fi
\ifx \bedition  \undefined \def \bedition#1{#1}\fi
\ifx \bseriesno  \undefined \def \bseriesno#1{#1}\fi
\ifx \blocation  \undefined \def \blocation#1{#1}\fi
\ifx \bsertitle  \undefined \def \bsertitle#1{#1}\fi
\ifx \bsnm \undefined \def \bsnm#1{#1}\fi
\ifx \bsuffix \undefined \def \bsuffix#1{#1}\fi
\ifx \bparticle \undefined \def \bparticle#1{#1}\fi
\ifx \barticle \undefined \def \barticle#1{#1}\fi
\bibcommenthead
\ifx \bconfdate \undefined \def \bconfdate #1{#1}\fi
\ifx \botherref \undefined \def \botherref #1{#1}\fi
\ifx \url \undefined \def \url#1{\textsf{#1}}\fi
\ifx \bchapter \undefined \def \bchapter#1{#1}\fi
\ifx \bbook \undefined \def \bbook#1{#1}\fi
\ifx \bcomment \undefined \def \bcomment#1{#1}\fi
\ifx \oauthor \undefined \def \oauthor#1{#1}\fi
\ifx \citeauthoryear \undefined \def \citeauthoryear#1{#1}\fi
\ifx \endbibitem  \undefined \def \endbibitem {}\fi
\ifx \bconflocation  \undefined \def \bconflocation#1{#1}\fi
\ifx \arxivurl  \undefined \def \arxivurl#1{\textsf{#1}}\fi
\csname PreBibitemsHook\endcsname

\bibitem[\protect\citeauthoryear{Bertsimas and
  Margaritis}{2025}]{bertsimas2025global}
\begin{barticle}
\bauthor{\bsnm{Bertsimas}, \binits{D.}},
\bauthor{\bsnm{Margaritis}, \binits{G.}}:
\batitle{Global optimization: a machine learning approach}.
\bjtitle{Journal of Global Optimization}
\bvolume{91}(\bissue{1}),
\bfpage{1}--\blpage{37}
(\byear{2025})
\end{barticle}
\endbibitem

\bibitem[\protect\citeauthoryear{Bradley
  et~al.}{2022}]{bradley2022perspectives}
\begin{barticle}
\bauthor{\bsnm{Bradley}, \binits{W.}},
\bauthor{\bsnm{Kim}, \binits{J.}},
\bauthor{\bsnm{Kilwein}, \binits{Z.}},
\bauthor{\bsnm{Blakely}, \binits{L.}},
\bauthor{\bsnm{Eydenberg}, \binits{M.}},
\bauthor{\bsnm{Jalvin}, \binits{J.}},
\bauthor{\bsnm{Laird}, \binits{C.}},
\bauthor{\bsnm{Boukouvala}, \binits{F.}}:
\batitle{Perspectives on the integration between first-principles and
  data-driven modeling}.
\bjtitle{Computers \& Chemical Engineering}
\bvolume{166},
\bfpage{107898}
(\byear{2022})
\end{barticle}
\endbibitem

\bibitem[\protect\citeauthoryear{Misener and
  Biegler}{2023}]{misener2023formulating}
\begin{barticle}
\bauthor{\bsnm{Misener}, \binits{R.}},
\bauthor{\bsnm{Biegler}, \binits{L.}}:
\batitle{Formulating data-driven surrogate models for process optimization}.
\bjtitle{Computers \& Chemical Engineering}
\bvolume{179},
\bfpage{108411}
(\byear{2023})
\end{barticle}
\endbibitem

\bibitem[\protect\citeauthoryear{Grimstad and
  Andersson}{2019}]{grimstad2019relu}
\begin{barticle}
\bauthor{\bsnm{Grimstad}, \binits{B.}},
\bauthor{\bsnm{Andersson}, \binits{H.}}:
\batitle{{ReLU} networks as surrogate models in mixed-integer linear programs}.
\bjtitle{Computers \& Chemical Engineering}
\bvolume{131},
\bfpage{106580}
(\byear{2019})
\end{barticle}
\endbibitem

\bibitem[\protect\citeauthoryear{Huchette et~al.}{2026}]{huchette2026deep}
\begin{botherref}
\oauthor{\bsnm{Huchette}, \binits{J.}},
\oauthor{\bsnm{Mu{\~n}oz}, \binits{G.}},
\oauthor{\bsnm{Serra}, \binits{T.}},
\oauthor{\bsnm{Tsay}, \binits{C.}}:
When deep learning meets polyhedral theory: A survey.
INFORMS Journal on Computing
(2026)
\end{botherref}
\endbibitem

\bibitem[\protect\citeauthoryear{Plate et~al.}{2026}]{plate2026analysis}
\begin{botherref}
\oauthor{\bsnm{Plate}, \binits{C.}},
\oauthor{\bsnm{Hahn}, \binits{M.}},
\oauthor{\bsnm{Klimek}, \binits{A.}},
\oauthor{\bsnm{Ganzer}, \binits{C.}},
\oauthor{\bsnm{Sundmacher}, \binits{K.}},
\oauthor{\bsnm{Sager}, \binits{S.}}:
An analysis of optimization problems involving relu neural networks.
Optimization and Engineering,
1--33
(2026)
\end{botherref}
\endbibitem

\bibitem[\protect\citeauthoryear{Botoeva et~al.}{2020}]{botoeva2020efficient}
\begin{bchapter}
\bauthor{\bsnm{Botoeva}, \binits{E.}},
\bauthor{\bsnm{Kouvaros}, \binits{P.}},
\bauthor{\bsnm{Kronqvist}, \binits{J.}},
\bauthor{\bsnm{Lomuscio}, \binits{A.}},
\bauthor{\bsnm{Misener}, \binits{R.}}:
\bctitle{Efficient verification of relu-based neural networks via dependency
  analysis}.
In: \bbtitle{Proceedings of the AAAI Conference on Artificial Intelligence},
vol. \bseriesno{34},
pp. \bfpage{3291}--\blpage{3299}
(\byear{2020})
\end{bchapter}
\endbibitem

\bibitem[\protect\citeauthoryear{R{\"o}ssig and
  Petkovic}{2021}]{rossig2021advances}
\begin{barticle}
\bauthor{\bsnm{R{\"o}ssig}, \binits{A.}},
\bauthor{\bsnm{Petkovic}, \binits{M.}}:
\batitle{Advances in verification of {ReLU} neural networks}.
\bjtitle{Journal of Global Optimization}
\bvolume{81}(\bissue{1}),
\bfpage{109}--\blpage{152}
(\byear{2021})
\end{barticle}
\endbibitem

\bibitem[\protect\citeauthoryear{Sosnin et~al.}{2024}]{sosnin2024certified}
\begin{botherref}
\oauthor{\bsnm{Sosnin}, \binits{P.}},
\oauthor{\bsnm{M{\"u}ller}, \binits{M.N.}},
\oauthor{\bsnm{Baader}, \binits{M.}},
\oauthor{\bsnm{Tsay}, \binits{C.}},
\oauthor{\bsnm{Wicker}, \binits{M.}}:
Certified robustness to data poisoning in gradient-based training.
arXiv preprint arXiv:2406.05670
(2024)
\end{botherref}
\endbibitem

\bibitem[\protect\citeauthoryear{Sosnin et~al.}{2026}]{sosnin2026exact}
\begin{botherref}
\oauthor{\bsnm{Sosnin}, \binits{P.}},
\oauthor{\bsnm{Knapp}, \binits{J.}},
\oauthor{\bsnm{Kennedy}, \binits{F.}},
\oauthor{\bsnm{Collyer}, \binits{J.}},
\oauthor{\bsnm{Tsay}, \binits{C.}}:
Exact certification of data-poisoning attacks using mixed-integer programming.
arXiv preprint arXiv:2602.16944
(2026)
\end{botherref}
\endbibitem

\bibitem[\protect\citeauthoryear{Kanamori et~al.}{2021}]{kanamori2021ordered}
\begin{bchapter}
\bauthor{\bsnm{Kanamori}, \binits{K.}},
\bauthor{\bsnm{Takagi}, \binits{T.}},
\bauthor{\bsnm{Kobayashi}, \binits{K.}},
\bauthor{\bsnm{Ike}, \binits{Y.}},
\bauthor{\bsnm{Uemura}, \binits{K.}},
\bauthor{\bsnm{Arimura}, \binits{H.}}:
\bctitle{Ordered counterfactual explanation by mixed-integer linear
  optimization}.
In: \bbtitle{Proceedings of the AAAI Conference on Artificial Intelligence},
vol. \bseriesno{35},
pp. \bfpage{11564}--\blpage{11574}
(\byear{2021})
\end{bchapter}
\endbibitem

\bibitem[\protect\citeauthoryear{Tsiourvas
  et~al.}{2024}]{tsiourvas2024manifold}
\begin{bchapter}
\bauthor{\bsnm{Tsiourvas}, \binits{A.}},
\bauthor{\bsnm{Sun}, \binits{W.}},
\bauthor{\bsnm{Perakis}, \binits{G.}}:
\bctitle{Manifold-aligned counterfactual explanations for neural networks}.
In: \bbtitle{International Conference on Artificial Intelligence and
  Statistics},
pp. \bfpage{3763}--\blpage{3771}
(\byear{2024}).
\bcomment{PMLR}
\end{bchapter}
\endbibitem

\bibitem[\protect\citeauthoryear{Burtea and Tsay}{2024}]{burtea2024constrained}
\begin{barticle}
\bauthor{\bsnm{Burtea}, \binits{R.}},
\bauthor{\bsnm{Tsay}, \binits{C.}}:
\batitle{Constrained continuous-action reinforcement learning for supply chain
  inventory management}.
\bjtitle{Computers \& Chemical Engineering}
\bvolume{181},
\bfpage{108518}
(\byear{2024})
\end{barticle}
\endbibitem

\bibitem[\protect\citeauthoryear{Ryu et~al.}{2019}]{ryu2019caql}
\begin{botherref}
\oauthor{\bsnm{Ryu}, \binits{M.}},
\oauthor{\bsnm{Chow}, \binits{Y.}},
\oauthor{\bsnm{Anderson}, \binits{R.}},
\oauthor{\bsnm{Tjandraatmadja}, \binits{C.}},
\oauthor{\bsnm{Boutilier}, \binits{C.}}:
{CAQL: Continuous action Q-learning}.
arXiv preprint arXiv:1909.12397
(2019)
\end{botherref}
\endbibitem

\bibitem[\protect\citeauthoryear{Benbaki et~al.}{2023}]{benbaki2023fast}
\begin{bchapter}
\bauthor{\bsnm{Benbaki}, \binits{R.}},
\bauthor{\bsnm{Chen}, \binits{W.}},
\bauthor{\bsnm{Meng}, \binits{X.}},
\bauthor{\bsnm{Hazimeh}, \binits{H.}},
\bauthor{\bsnm{Ponomareva}, \binits{N.}},
\bauthor{\bsnm{Zhao}, \binits{Z.}},
\bauthor{\bsnm{Mazumder}, \binits{R.}}:
\bctitle{Fast as chita: Neural network pruning with combinatorial
  optimization}.
In: \bbtitle{International Conference on Machine Learning},
pp. \bfpage{2031}--\blpage{2049}
(\byear{2023}).
\bcomment{PMLR}
\end{bchapter}
\endbibitem

\bibitem[\protect\citeauthoryear{Serra et~al.}{2021}]{serra2021scaling}
\begin{barticle}
\bauthor{\bsnm{Serra}, \binits{T.}},
\bauthor{\bsnm{Yu}, \binits{X.}},
\bauthor{\bsnm{Kumar}, \binits{A.}},
\bauthor{\bsnm{Ramalingam}, \binits{S.}}:
\batitle{Scaling up exact neural network compression by {ReLU} stability}.
\bjtitle{Advances in Neural Information Processing Systems}
\bvolume{34},
\bfpage{27081}--\blpage{27093}
(\byear{2021})
\end{barticle}
\endbibitem

\bibitem[\protect\citeauthoryear{Perakis and
  Tsiourvas}{2022}]{perakis2022optimizing}
\begin{botherref}
\oauthor{\bsnm{Perakis}, \binits{G.}},
\oauthor{\bsnm{Tsiourvas}, \binits{A.}}:
Optimizing objective functions from trained {ReLU} neural networks via
  sampling.
arXiv preprint arXiv:2205.14189
(2022)
\end{botherref}
\endbibitem

\bibitem[\protect\citeauthoryear{Tong et~al.}{2024}]{tong2024optimization}
\begin{bchapter}
\bauthor{\bsnm{Tong}, \binits{J.}},
\bauthor{\bsnm{Cai}, \binits{J.}},
\bauthor{\bsnm{Serra}, \binits{T.}}:
\bctitle{Optimization over trained neural networks: Taking a relaxing walk}.
In: \bbtitle{International Conference on the Integration of Constraint
  Programming, Artificial Intelligence, and Operations Research},
pp. \bfpage{221}--\blpage{233}
(\byear{2024}).
\bcomment{Springer}
\end{bchapter}
\endbibitem

\bibitem[\protect\citeauthoryear{Tong et~al.}{2025}]{tong2025optimization}
\begin{botherref}
\oauthor{\bsnm{Tong}, \binits{J.}},
\oauthor{\bsnm{Zhu}, \binits{Y.}},
\oauthor{\bsnm{Serra}, \binits{T.}},
\oauthor{\bsnm{Burer}, \binits{S.}}:
Optimization over trained neural networks: Going large with gradient-based
  algorithms.
arXiv preprint arXiv:2512.24295
(2025)
\end{botherref}
\endbibitem

\bibitem[\protect\citeauthoryear{Fajemisin
  et~al.}{2024}]{fajemisin2024optimization}
\begin{barticle}
\bauthor{\bsnm{Fajemisin}, \binits{A.O.}},
\bauthor{\bsnm{Maragno}, \binits{D.}},
\bauthor{\bsnm{Hertog}, \binits{D.}}:
\batitle{Optimization with constraint learning: A framework and survey}.
\bjtitle{European Journal of Operational Research}
\bvolume{314}(\bissue{1}),
\bfpage{1}--\blpage{14}
(\byear{2024})
\end{barticle}
\endbibitem

\bibitem[\protect\citeauthoryear{Maragno et~al.}{2025}]{maragno2025mixed}
\begin{barticle}
\bauthor{\bsnm{Maragno}, \binits{D.}},
\bauthor{\bsnm{Wiberg}, \binits{H.}},
\bauthor{\bsnm{Bertsimas}, \binits{D.}},
\bauthor{\bsnm{Birbil}, \binits{{\c{S}}.{\. I}.}},
\bauthor{\bsnm{Hertog}, \binits{D.}},
\bauthor{\bsnm{Fajemisin}, \binits{A.O.}}:
\batitle{Mixed-integer optimization with constraint learning}.
\bjtitle{Operations Research}
\bvolume{73}(\bissue{2}),
\bfpage{1011}--\blpage{1028}
(\byear{2025})
\end{barticle}
\endbibitem

\bibitem[\protect\citeauthoryear{Dumouchelle
  et~al.}{2023}]{dumouchelle2023neur2ro}
\begin{bchapter}
\bauthor{\bsnm{Dumouchelle}, \binits{J.}},
\bauthor{\bsnm{Julien}, \binits{E.}},
\bauthor{\bsnm{Kurtz}, \binits{J.}},
\bauthor{\bsnm{Khalil}, \binits{E.B.}}:
\bctitle{{Neur2RO}: Neural two-stage robust optimization}.
In: \bbtitle{International Conference on Learning Representations}
(\byear{2023})
\end{bchapter}
\endbibitem

\bibitem[\protect\citeauthoryear{Kronqvist
  et~al.}{2023}]{kronqvist2023alternating}
\begin{bchapter}
\bauthor{\bsnm{Kronqvist}, \binits{J.}},
\bauthor{\bsnm{Li}, \binits{B.}},
\bauthor{\bsnm{Rolfes}, \binits{J.}},
\bauthor{\bsnm{Zhao}, \binits{S.}}:
\bctitle{Alternating mixed-integer programming and neural network training for
  approximating stochastic two-stage problems}.
In: \bbtitle{International Conference on Machine Learning, Optimization, and
  Data Science},
pp. \bfpage{124}--\blpage{139}
(\byear{2023}).
\bcomment{Springer}
\end{bchapter}
\endbibitem

\bibitem[\protect\citeauthoryear{Patel et~al.}{2022}]{patel2022neur2sp}
\begin{barticle}
\bauthor{\bsnm{Patel}, \binits{R.M.}},
\bauthor{\bsnm{Dumouchelle}, \binits{J.}},
\bauthor{\bsnm{Khalil}, \binits{E.}},
\bauthor{\bsnm{Bodur}, \binits{M.}}:
\batitle{{Neur2SP}: Neural two-stage stochastic programming}.
\bjtitle{Advances in neural information processing systems}
\bvolume{35},
\bfpage{23992}--\blpage{24005}
(\byear{2022})
\end{barticle}
\endbibitem

\bibitem[\protect\citeauthoryear{Bergman et~al.}{2022}]{bergman2022janos}
\begin{barticle}
\bauthor{\bsnm{Bergman}, \binits{D.}},
\bauthor{\bsnm{Huang}, \binits{T.}},
\bauthor{\bsnm{Brooks}, \binits{P.}},
\bauthor{\bsnm{Lodi}, \binits{A.}},
\bauthor{\bsnm{Raghunathan}, \binits{A.U.}}:
\batitle{{JANOS}: an integrated predictive and prescriptive modeling
  framework}.
\bjtitle{INFORMS Journal on Computing}
\bvolume{34}(\bissue{2}),
\bfpage{807}--\blpage{816}
(\byear{2022})
\end{barticle}
\endbibitem

\bibitem[\protect\citeauthoryear{Ceccon et~al.}{2022}]{ceccon2022omlt}
\begin{barticle}
\bauthor{\bsnm{Ceccon}, \binits{F.}},
\bauthor{\bsnm{Jalving}, \binits{J.}},
\bauthor{\bsnm{Haddad}, \binits{J.}},
\bauthor{\bsnm{Thebelt}, \binits{A.}},
\bauthor{\bsnm{Tsay}, \binits{C.}},
\bauthor{\bsnm{Laird}, \binits{C.D.}},
\bauthor{\bsnm{Misener}, \binits{R.}}:
\batitle{{OMLT}: Optimization \& machine learning toolkit}.
\bjtitle{Journal of Machine Learning Research}
\bvolume{23}(\bissue{349}),
\bfpage{1}--\blpage{8}
(\byear{2022})
\end{barticle}
\endbibitem

\bibitem[\protect\citeauthoryear{Turner et~al.}{2025}]{turner2025pyscipopt}
\begin{bchapter}
\bauthor{\bsnm{Turner}, \binits{M.}},
\bauthor{\bsnm{Chmiela}, \binits{A.}},
\bauthor{\bsnm{Koch}, \binits{T.}},
\bauthor{\bsnm{Winkler}, \binits{M.}}:
\bctitle{{PySCIPOpt-ML}: Embedding trained machine learning models into
  mixed-integer programs}.
In: \bbtitle{International Conference on the Integration of Constraint
  Programming, Artificial Intelligence, and Operations Research},
pp. \bfpage{218}--\blpage{234}
(\byear{2025}).
\bcomment{Springer}
\end{bchapter}
\endbibitem

\bibitem[\protect\citeauthoryear{Jalving et~al.}{2023}]{jalving2023beyond}
\begin{barticle}
\bauthor{\bsnm{Jalving}, \binits{J.}},
\bauthor{\bsnm{Ghouse}, \binits{J.}},
\bauthor{\bsnm{Cortes}, \binits{N.}},
\bauthor{\bsnm{Gao}, \binits{X.}},
\bauthor{\bsnm{Knueven}, \binits{B.}},
\bauthor{\bsnm{Agi}, \binits{D.}},
\bauthor{\bsnm{Martin}, \binits{S.}},
\bauthor{\bsnm{Chen}, \binits{X.}},
\bauthor{\bsnm{Guittet}, \binits{D.}},
\bauthor{\bsnm{Tumbalam-Gooty}, \binits{R.}}, \betal:
\batitle{Beyond price taker: Conceptual design and optimization of integrated
  energy systems using machine learning market surrogates}.
\bjtitle{Applied Energy}
\bvolume{351},
\bfpage{121767}
(\byear{2023})
\end{barticle}
\endbibitem

\bibitem[\protect\citeauthoryear{L{\'o}pez-Flores
  et~al.}{2024}]{lopez2024process}
\begin{barticle}
\bauthor{\bsnm{L{\'o}pez-Flores}, \binits{F.J.}},
\bauthor{\bsnm{Ram{\'\i}rez-M{\'a}rquez}, \binits{C.}},
\bauthor{\bsnm{Ponce-Ortega}, \binits{J.M.}}:
\batitle{Process systems engineering tools for optimization of trained machine
  learning models: Comparative and perspective}.
\bjtitle{Industrial \& Engineering Chemistry Research}
\bvolume{63}(\bissue{32}),
\bfpage{13966}--\blpage{13979}
(\byear{2024})
\end{barticle}
\endbibitem

\bibitem[\protect\citeauthoryear{McDonald et~al.}{2024}]{mcdonald2024mixed}
\begin{barticle}
\bauthor{\bsnm{McDonald}, \binits{T.}},
\bauthor{\bsnm{Tsay}, \binits{C.}},
\bauthor{\bsnm{Schweidtmann}, \binits{A.M.}},
\bauthor{\bsnm{Yorke-Smith}, \binits{N.}}:
\batitle{Mixed-integer optimisation of graph neural networks for computer-aided
  molecular design}.
\bjtitle{Computers \& Chemical Engineering}
\bvolume{185},
\bfpage{108660}
(\byear{2024})
\end{barticle}
\endbibitem

\bibitem[\protect\citeauthoryear{Schweidtmann and
  Mitsos}{2019}]{schweidtmann2019deterministic}
\begin{barticle}
\bauthor{\bsnm{Schweidtmann}, \binits{A.M.}},
\bauthor{\bsnm{Mitsos}, \binits{A.}}:
\batitle{Deterministic global optimization with artificial neural networks
  embedded}.
\bjtitle{Journal of Optimization Theory and Applications}
\bvolume{180}(\bissue{3}),
\bfpage{925}--\blpage{948}
(\byear{2019})
\end{barticle}
\endbibitem

\bibitem[\protect\citeauthoryear{Fischetti and Jo}{2018}]{fischetti2018deep}
\begin{barticle}
\bauthor{\bsnm{Fischetti}, \binits{M.}},
\bauthor{\bsnm{Jo}, \binits{J.}}:
\batitle{Deep neural networks and mixed integer linear optimization}.
\bjtitle{Constraints}
\bvolume{23}(\bissue{3}),
\bfpage{296}--\blpage{309}
(\byear{2018})
\end{barticle}
\endbibitem

\bibitem[\protect\citeauthoryear{Lomuscio and
  Maganti}{2017}]{lomuscio2017approach}
\begin{botherref}
\oauthor{\bsnm{Lomuscio}, \binits{A.}},
\oauthor{\bsnm{Maganti}, \binits{L.}}:
An approach to reachability analysis for feed-forward relu neural networks.
arXiv preprint arXiv:1706.07351
(2017)
\end{botherref}
\endbibitem

\bibitem[\protect\citeauthoryear{Tjeng et~al.}{2017}]{tjeng2017evaluating}
\begin{bchapter}
\bauthor{\bsnm{Tjeng}, \binits{V.}},
\bauthor{\bsnm{Xiao}, \binits{K.Y.}},
\bauthor{\bsnm{Tedrake}, \binits{R.}}:
\bctitle{Evaluating robustness of neural networks with mixed integer
  programming}.
In: \bbtitle{International Conference on Learning Representations}
(\byear{2017})
\end{bchapter}
\endbibitem

\bibitem[\protect\citeauthoryear{Anderson et~al.}{2020}]{anderson2020strong}
\begin{barticle}
\bauthor{\bsnm{Anderson}, \binits{R.}},
\bauthor{\bsnm{Huchette}, \binits{J.}},
\bauthor{\bsnm{Ma}, \binits{W.}},
\bauthor{\bsnm{Tjandraatmadja}, \binits{C.}},
\bauthor{\bsnm{Vielma}, \binits{J.P.}}:
\batitle{Strong mixed-integer programming formulations for trained neural
  networks}.
\bjtitle{Mathematical Programming}
\bvolume{183}(\bissue{1}),
\bfpage{3}--\blpage{39}
(\byear{2020})
\end{barticle}
\endbibitem

\bibitem[\protect\citeauthoryear{Tsay et~al.}{2021}]{tsay2021partition}
\begin{barticle}
\bauthor{\bsnm{Tsay}, \binits{C.}},
\bauthor{\bsnm{Kronqvist}, \binits{J.}},
\bauthor{\bsnm{Thebelt}, \binits{A.}},
\bauthor{\bsnm{Misener}, \binits{R.}}:
\batitle{Partition-based formulations for mixed-integer optimization of trained
  {ReLU} neural networks}.
\bjtitle{Advances in neural information processing systems}
\bvolume{34},
\bfpage{3068}--\blpage{3080}
(\byear{2021})
\end{barticle}
\endbibitem

\bibitem[\protect\citeauthoryear{Badilla
  et~al.}{2023}]{badilla2023computational}
\begin{botherref}
\oauthor{\bsnm{Badilla}, \binits{F.}},
\oauthor{\bsnm{Goycoolea}, \binits{M.}},
\oauthor{\bsnm{Mu{\~n}oz}, \binits{G.}},
\oauthor{\bsnm{Serra}, \binits{T.}}:
Computational tradeoffs of optimization-based bound tightening in relu
  networks.
arXiv preprint arXiv:2312.16699
(2023)
\end{botherref}
\endbibitem

\bibitem[\protect\citeauthoryear{Sosnin and Tsay}{2024}]{sosnin2024scaling}
\begin{bchapter}
\bauthor{\bsnm{Sosnin}, \binits{P.}},
\bauthor{\bsnm{Tsay}, \binits{C.}}:
\bctitle{Scaling mixed-integer programming for certification of neural network
  controllers using bounds tightening}.
In: \bbtitle{2024 IEEE 63rd Conference on Decision and Control (CDC)},
pp. \bfpage{1645}--\blpage{1650}
(\byear{2024}).
\bcomment{IEEE}
\end{bchapter}
\endbibitem

\bibitem[\protect\citeauthoryear{Zhao et~al.}{2024}]{zhao2024bound}
\begin{bchapter}
\bauthor{\bsnm{Zhao}, \binits{H.}},
\bauthor{\bsnm{Hijazi}, \binits{H.}},
\bauthor{\bsnm{Jones}, \binits{H.}},
\bauthor{\bsnm{Moore}, \binits{J.}},
\bauthor{\bsnm{Tanneau}, \binits{M.}},
\bauthor{\bsnm{Van~Hentenryck}, \binits{P.}}:
\bctitle{Bound tightening using rolling-horizon decomposition for neural
  network verification}.
In: \bbtitle{International Conference on the Integration of Constraint
  Programming, Artificial Intelligence, and Operations Research},
pp. \bfpage{289}--\blpage{303}
(\byear{2024}).
\bcomment{Springer}
\end{bchapter}
\endbibitem

\bibitem[\protect\citeauthoryear{Milgrom and Segal}{2002}]{milgrom2002}
\begin{barticle}
\bauthor{\bsnm{Milgrom}, \binits{P.}},
\bauthor{\bsnm{Segal}, \binits{I.}}:
\batitle{Envelope theorems for arbitrary choice sets}.
\bjtitle{Econometrica}
\bvolume{70}(\bissue{2}),
\bfpage{583}--\blpage{601}
(\byear{2002})
\doiurl{10.1111/1468-0262.00296}
\end{barticle}
\endbibitem

\bibitem[\protect\citeauthoryear{Fiacco}{1983}]{fiacco1983}
\begin{bbook}
\bauthor{\bsnm{Fiacco}, \binits{A.V.}}:
\bbtitle{Introduction to Sensitivity and Stability Analysis in Nonlinear
  Programming}.
\bpublisher{Academic Press},
\blocation{New York}
(\byear{1983})
\end{bbook}
\endbibitem

\bibitem[\protect\citeauthoryear{Xiao et~al.}{2019}]{xiao2019training}
\begin{bchapter}
\bauthor{\bsnm{Xiao}, \binits{K.}},
\bauthor{\bsnm{Tjeng}, \binits{V.}},
\bauthor{\bsnm{Shafiullah}, \binits{N.M.}},
\bauthor{\bsnm{Madry}, \binits{A.}}:
\bctitle{Training for faster adversarial robustness verification via inducing
  {ReLU} stability}.
In: \bbtitle{International Conference on Learning Representations}
(\byear{2019})
\end{bchapter}
\endbibitem

\bibitem[\protect\citeauthoryear{Gowal et~al.}{2018}]{gowal2018effectiveness}
\begin{botherref}
\oauthor{\bsnm{Gowal}, \binits{S.}},
\oauthor{\bsnm{Dvijotham}, \binits{K.}},
\oauthor{\bsnm{Stanforth}, \binits{R.}},
\oauthor{\bsnm{Bunel}, \binits{R.}},
\oauthor{\bsnm{Qin}, \binits{C.}},
\oauthor{\bsnm{Uesato}, \binits{J.}},
\oauthor{\bsnm{Arandjelovic}, \binits{R.}},
\oauthor{\bsnm{Mann}, \binits{T.}},
\oauthor{\bsnm{Kohli}, \binits{P.}}:
On the effectiveness of interval bound propagation for training verifiably
  robust models.
arXiv preprint arXiv:1810.12715
(2018)
\end{botherref}
\endbibitem

\bibitem[\protect\citeauthoryear{Mirman
  et~al.}{2018}]{mirman2018differentiable}
\begin{bchapter}
\bauthor{\bsnm{Mirman}, \binits{M.}},
\bauthor{\bsnm{Gehr}, \binits{T.}},
\bauthor{\bsnm{Vechev}, \binits{M.}}:
\bctitle{Differentiable abstract interpretation for provably robust neural
  networks}.
In: \bbtitle{International Conference on Machine Learning},
pp. \bfpage{3578}--\blpage{3586}
(\byear{2018}).
\bcomment{PMLR}
\end{bchapter}
\endbibitem

\bibitem[\protect\citeauthoryear{Zhang et~al.}{2020}]{zhang2020towards}
\begin{bchapter}
\bauthor{\bsnm{Zhang}, \binits{H.}},
\bauthor{\bsnm{Chen}, \binits{H.}},
\bauthor{\bsnm{Xiao}, \binits{C.}},
\bauthor{\bsnm{Gowal}, \binits{S.}},
\bauthor{\bsnm{Stanforth}, \binits{R.}},
\bauthor{\bsnm{Li}, \binits{B.}},
\bauthor{\bsnm{Boning}, \binits{D.}},
\bauthor{\bsnm{Hsieh}, \binits{C.J.}}:
\bctitle{Towards stable and efficient training of verifiably robust neural
  networks}.
In: \bbtitle{International Conference on Learning Representations}
(\byear{2020})
\end{bchapter}
\endbibitem

\bibitem[\protect\citeauthoryear{Sosnin et~al.}{2025}]{sosnin2025abstract}
\begin{botherref}
\oauthor{\bsnm{Sosnin}, \binits{P.}},
\oauthor{\bsnm{Wicker}, \binits{M.}},
\oauthor{\bsnm{Collyer}, \binits{J.}},
\oauthor{\bsnm{Tsay}, \binits{C.}}:
Abstract gradient training: A unified certification framework for data
  poisoning, unlearning, and differential privacy.
arXiv preprint arXiv:2511.09400
(2025)
\end{botherref}
\endbibitem

\bibitem[\protect\citeauthoryear{Mandi et~al.}{2024}]{mandi2024decision}
\begin{barticle}
\bauthor{\bsnm{Mandi}, \binits{J.}},
\bauthor{\bsnm{Kotary}, \binits{J.}},
\bauthor{\bsnm{Berden}, \binits{S.}},
\bauthor{\bsnm{Mulamba}, \binits{M.}},
\bauthor{\bsnm{Bucarey}, \binits{V.}},
\bauthor{\bsnm{Guns}, \binits{T.}},
\bauthor{\bsnm{Fioretto}, \binits{F.}}:
\batitle{Decision-focused learning: Foundations, state of the art, benchmark
  and future opportunities}.
\bjtitle{Journal of Artificial Intelligence Research}
\bvolume{80},
\bfpage{1623}--\blpage{1701}
(\byear{2024})
\end{barticle}
\endbibitem

\bibitem[\protect\citeauthoryear{Elmachtoub and
  Grigas}{2022}]{elmachtoub2022smart}
\begin{barticle}
\bauthor{\bsnm{Elmachtoub}, \binits{A.N.}},
\bauthor{\bsnm{Grigas}, \binits{P.}}:
\batitle{Smart ``predict, then optimize''}.
\bjtitle{Management Science}
\bvolume{68}(\bissue{1}),
\bfpage{9}--\blpage{26}
(\byear{2022})
\end{barticle}
\endbibitem

\bibitem[\protect\citeauthoryear{Donti et~al.}{2017}]{donti2017task}
\begin{botherref}
\oauthor{\bsnm{Donti}, \binits{P.}},
\oauthor{\bsnm{Amos}, \binits{B.}},
\oauthor{\bsnm{Kolter}, \binits{J.Z.}}:
Task-based end-to-end model learning in stochastic optimization.
Advances in neural information processing systems
\textbf{30}
(2017)
\end{botherref}
\endbibitem

\bibitem[\protect\citeauthoryear{Dvijotham
  et~al.}{2018}]{dvijotham2018training}
\begin{botherref}
\oauthor{\bsnm{Dvijotham}, \binits{K.}},
\oauthor{\bsnm{Gowal}, \binits{S.}},
\oauthor{\bsnm{Stanforth}, \binits{R.}},
\oauthor{\bsnm{Arandjelovic}, \binits{R.}},
\oauthor{\bsnm{O'Donoghue}, \binits{B.}},
\oauthor{\bsnm{Uesato}, \binits{J.}},
\oauthor{\bsnm{Kohli}, \binits{P.}}:
Training verified learners with learned verifiers.
arXiv preprint arXiv:1805.10265
(2018)
\end{botherref}
\endbibitem

\bibitem[\protect\citeauthoryear{Tang and Khalil}{2024}]{TangKhalil2024}
\begin{barticle}
\bauthor{\bsnm{Tang}, \binits{B.}},
\bauthor{\bsnm{Khalil}, \binits{E.B.}}:
\batitle{{PyEPO: a PyTorch-based end-to-end predict-then-optimize library for
  linear and integer programming}}.
\bjtitle{Mathematical Programming Computation}
\bvolume{16}(\bissue{3}),
\bfpage{297}--\blpage{335}
(\byear{2024})
\doiurl{10.1007/s12532-024-00255-x}
\end{barticle}
\endbibitem

\bibitem[\protect\citeauthoryear{Amos et~al.}{2017}]{amos2017input}
\begin{bchapter}
\bauthor{\bsnm{Amos}, \binits{B.}},
\bauthor{\bsnm{Xu}, \binits{L.}},
\bauthor{\bsnm{Kolter}, \binits{J.Z.}}:
\bctitle{Input convex neural networks}.
In: \bbtitle{International Conference on Machine Learning},
pp. \bfpage{146}--\blpage{155}
(\byear{2017}).
\bcomment{PMLR}
\end{bchapter}
\endbibitem

\bibitem[\protect\citeauthoryear{Rosemberg et~al.}{2025}]{rosemberg2025sobolev}
\begin{botherref}
\oauthor{\bsnm{Rosemberg}, \binits{A.W.}},
\oauthor{\bsnm{Garcia}, \binits{J.D.}},
\oauthor{\bsnm{Bent}, \binits{R.}},
\oauthor{\bsnm{Van~Hentenryck}, \binits{P.}}:
Sobolev training of end-to-end optimization proxies.
arXiv preprint arXiv:2505.11342
(2025)
\end{botherref}
\endbibitem

\bibitem[\protect\citeauthoryear{Tsay}{2021}]{tsay2021sobolev}
\begin{barticle}
\bauthor{\bsnm{Tsay}, \binits{C.}}:
\batitle{Sobolev trained neural network surrogate models for optimization}.
\bjtitle{Computers \& Chemical Engineering}
\bvolume{153},
\bfpage{107419}
(\byear{2021})
\end{barticle}
\endbibitem

\bibitem[\protect\citeauthoryear{Goodfellow et~al.}{2016}]{goodfellow2016deep}
\begin{bbook}
\bauthor{\bsnm{Goodfellow}, \binits{I.}},
\bauthor{\bsnm{Bengio}, \binits{Y.}},
\bauthor{\bsnm{Courville}, \binits{A.}},
\bauthor{\bsnm{Bengio}, \binits{Y.}}:
\bbtitle{Deep Learning}
vol. \bseriesno{1}.
\bpublisher{MIT press Cambridge}, \blocation{???}
(\byear{2016})
\end{bbook}
\endbibitem

\bibitem[\protect\citeauthoryear{Manng{\aa}rd
  et~al.}{2018}]{manngaard2018structural}
\begin{barticle}
\bauthor{\bsnm{Manng{\aa}rd}, \binits{M.}},
\bauthor{\bsnm{Kronqvist}, \binits{J.}},
\bauthor{\bsnm{B{\"o}ling}, \binits{J.M.}}:
\batitle{Structural learning in artificial neural networks using sparse
  optimization}.
\bjtitle{Neurocomputing}
\bvolume{272},
\bfpage{660}--\blpage{667}
(\byear{2018})
\end{barticle}
\endbibitem

\bibitem[\protect\citeauthoryear{Alc{\'a}ntara
  et~al.}{2025}]{alcantara2025quantile}
\begin{barticle}
\bauthor{\bsnm{Alc{\'a}ntara}, \binits{A.}},
\bauthor{\bsnm{Ruiz}, \binits{C.}},
\bauthor{\bsnm{Tsay}, \binits{C.}}:
\batitle{A quantile neural network framework for two-stage stochastic
  optimization}.
\bjtitle{Expert Systems with Applications}
\bvolume{284},
\bfpage{127876}
(\byear{2025})
\end{barticle}
\endbibitem

\bibitem[\protect\citeauthoryear{Ghilardi
  et~al.}{2025}]{ghilardi2025integrated}
\begin{barticle}
\bauthor{\bsnm{Ghilardi}, \binits{L.M.}},
\bauthor{\bsnm{Patr{\'o}n}, \binits{G.D.}},
\bauthor{\bsnm{Alc{\'a}ntara}, \binits{A.}},
\bauthor{\bsnm{Tsay}, \binits{C.}}:
\batitle{Integrated design and scheduling of hydrogen processes under
  uncertainty: A quantile neural network approach}.
\bjtitle{Industrial \& Engineering Chemistry Research}
\bvolume{64}(\bissue{44}),
\bfpage{21235}--\blpage{21250}
(\byear{2025})
\end{barticle}
\endbibitem

\bibitem[\protect\citeauthoryear{Amos and Kolter}{2017}]{amos2017optnet}
\begin{bchapter}
\bauthor{\bsnm{Amos}, \binits{B.}},
\bauthor{\bsnm{Kolter}, \binits{J.Z.}}:
\bctitle{Optnet: Differentiable optimization as a layer in neural networks}.
In: \bbtitle{International Conference on Machine Learning},
pp. \bfpage{136}--\blpage{145}
(\byear{2017}).
\bcomment{PMLR}
\end{bchapter}
\endbibitem

\bibitem[\protect\citeauthoryear{Bertsimas and
  Tsitsiklis}{1997}]{bertsimas1997introduction}
\begin{bbook}
\bauthor{\bsnm{Bertsimas}, \binits{D.}},
\bauthor{\bsnm{Tsitsiklis}, \binits{J.N.}}:
\bbtitle{Introduction to Linear Optimization}.
\bpublisher{Athena Scientific},
\blocation{Belmont, MA}
(\byear{1997})
\end{bbook}
\endbibitem

\bibitem[\protect\citeauthoryear{Wilhelm et~al.}{2023}]{wilhelm2023convex}
\begin{barticle}
\bauthor{\bsnm{Wilhelm}, \binits{M.E.}},
\bauthor{\bsnm{Wang}, \binits{C.}},
\bauthor{\bsnm{Stuber}, \binits{M.D.}}:
\batitle{Convex and concave envelopes of artificial neural network activation
  functions for deterministic global optimization}.
\bjtitle{Journal of Global Optimization}
\bvolume{85}(\bissue{3}),
\bfpage{569}--\blpage{594}
(\byear{2023})
\end{barticle}
\endbibitem

\bibitem[\protect\citeauthoryear{Agrawal
  et~al.}{2019}]{agrawal2019differentiable}
\begin{botherref}
\oauthor{\bsnm{Agrawal}, \binits{A.}},
\oauthor{\bsnm{Amos}, \binits{B.}},
\oauthor{\bsnm{Barratt}, \binits{S.}},
\oauthor{\bsnm{Boyd}, \binits{S.}},
\oauthor{\bsnm{Diamond}, \binits{S.}},
\oauthor{\bsnm{Kolter}, \binits{J.Z.}}:
Differentiable convex optimization layers.
Advances in neural information processing systems
\textbf{32}
(2019)
\end{botherref}
\endbibitem

\bibitem[\protect\citeauthoryear{Pineda et~al.}{2022}]{pineda2022theseus}
\begin{barticle}
\bauthor{\bsnm{Pineda}, \binits{L.}},
\bauthor{\bsnm{Fan}, \binits{T.}},
\bauthor{\bsnm{Monge}, \binits{M.}},
\bauthor{\bsnm{Venkataraman}, \binits{S.}},
\bauthor{\bsnm{Sodhi}, \binits{P.}},
\bauthor{\bsnm{Chen}, \binits{R.T.}},
\bauthor{\bsnm{Ortiz}, \binits{J.}},
\bauthor{\bsnm{DeTone}, \binits{D.}},
\bauthor{\bsnm{Wang}, \binits{A.}},
\bauthor{\bsnm{Anderson}, \binits{S.}}, \betal:
\batitle{Theseus: A library for differentiable nonlinear optimization}.
\bjtitle{Advances in Neural Information Processing Systems}
\bvolume{35},
\bfpage{3801}--\blpage{3818}
(\byear{2022})
\end{barticle}
\endbibitem

\bibitem[\protect\citeauthoryear{Besan{\c{c}}on
  et~al.}{2024}]{besanccon2024flexible}
\begin{barticle}
\bauthor{\bsnm{Besan{\c{c}}on}, \binits{M.}},
\bauthor{\bsnm{Dias~Garcia}, \binits{J.}},
\bauthor{\bsnm{Legat}, \binits{B.}},
\bauthor{\bsnm{Sharma}, \binits{A.}}:
\batitle{Flexible differentiable optimization via model transformations}.
\bjtitle{INFORMS Journal on Computing}
\bvolume{36}(\bissue{2}),
\bfpage{456}--\blpage{478}
(\byear{2024})
\end{barticle}
\endbibitem

\bibitem[\protect\citeauthoryear{Rosemberg et~al.}{2025}]{rosemberg2025general}
\begin{botherref}
\oauthor{\bsnm{Rosemberg}, \binits{A.W.}},
\oauthor{\bsnm{Garcia}, \binits{J.D.}},
\oauthor{\bsnm{Pacaud}, \binits{F.}},
\oauthor{\bsnm{Parker}, \binits{R.B.}},
\oauthor{\bsnm{Legat}, \binits{B.}},
\oauthor{\bsnm{Sundar}, \binits{K.}},
\oauthor{\bsnm{Bent}, \binits{R.}},
\oauthor{\bsnm{Van~Hentenryck}, \binits{P.}}:
A general and streamlined differentiable optimization framework.
arXiv preprint arXiv:2510.25986
(2025)
\end{botherref}
\endbibitem

\bibitem[\protect\citeauthoryear{Bengio et~al.}{2013}]{bengio2013estimating}
\begin{botherref}
\oauthor{\bsnm{Bengio}, \binits{Y.}},
\oauthor{\bsnm{L{\'e}onard}, \binits{N.}},
\oauthor{\bsnm{Courville}, \binits{A.}}:
Estimating or propagating gradients through stochastic neurons for conditional
  computation.
arXiv preprint arXiv:1308.3432
(2013)
\end{botherref}
\endbibitem

\bibitem[\protect\citeauthoryear{Yin et~al.}{2019}]{yin2019understanding}
\begin{bchapter}
\bauthor{\bsnm{Yin}, \binits{P.}},
\bauthor{\bsnm{Lyu}, \binits{J.}},
\bauthor{\bsnm{Zhang}, \binits{S.}},
\bauthor{\bsnm{Osher}, \binits{S.}},
\bauthor{\bsnm{Qi}, \binits{Y.}},
\bauthor{\bsnm{Xin}, \binits{J.}}:
\bctitle{Understanding straight-through estimator in training activation
  quantized neural nets}.
In: \bbtitle{International Conference on Learning Representations}
(\byear{2019})
\end{bchapter}
\endbibitem

\bibitem[\protect\citeauthoryear{Paszke et~al.}{2019}]{paszke2019pytorch}
\begin{botherref}
\oauthor{\bsnm{Paszke}, \binits{A.}},
\oauthor{\bsnm{Gross}, \binits{S.}},
\oauthor{\bsnm{Massa}, \binits{F.}},
\oauthor{\bsnm{Lerer}, \binits{A.}},
\oauthor{\bsnm{Bradbury}, \binits{J.}},
\oauthor{\bsnm{Chanan}, \binits{G.}},
\oauthor{\bsnm{Killeen}, \binits{T.}},
\oauthor{\bsnm{Lin}, \binits{Z.}},
\oauthor{\bsnm{Gimelshein}, \binits{N.}},
\oauthor{\bsnm{Antiga}, \binits{L.}}, et al.:
Pytorch: An imperative style, high-performance deep learning library.
Advances in neural information processing systems
\textbf{32}
(2019)
\end{botherref}
\endbibitem

\bibitem[\protect\citeauthoryear{{Gurobi Optimization, LLC}}{2026}]{gurobi}
\begin{botherref}
\oauthor{\bsnm{{Gurobi Optimization, LLC}}}:
{Gurobi Optimizer Reference Manual}
(2026).
\url{https://www.gurobi.com}
\end{botherref}
\endbibitem

\bibitem[\protect\citeauthoryear{Huangfu and
  Hall}{2018}]{huangfu2018parallelizing}
\begin{barticle}
\bauthor{\bsnm{Huangfu}, \binits{Q.}},
\bauthor{\bsnm{Hall}, \binits{J.J.}}:
\batitle{Parallelizing the dual revised simplex method}.
\bjtitle{Mathematical Programming Computation}
\bvolume{10}(\bissue{1}),
\bfpage{119}--\blpage{142}
(\byear{2018})
\end{barticle}
\endbibitem

\bibitem[\protect\citeauthoryear{Applegate
  et~al.}{2021}]{applegate2021practical}
\begin{barticle}
\bauthor{\bsnm{Applegate}, \binits{D.}},
\bauthor{\bsnm{D{\'\i}az}, \binits{M.}},
\bauthor{\bsnm{Hinder}, \binits{O.}},
\bauthor{\bsnm{Lu}, \binits{H.}},
\bauthor{\bsnm{Lubin}, \binits{M.}},
\bauthor{\bsnm{O'Donoghue}, \binits{B.}},
\bauthor{\bsnm{Schudy}, \binits{W.}}:
\batitle{Practical large-scale linear programming using primal-dual hybrid
  gradient}.
\bjtitle{Advances in Neural Information Processing Systems}
\bvolume{34},
\bfpage{20243}--\blpage{20257}
(\byear{2021})
\end{barticle}
\endbibitem

\bibitem[\protect\citeauthoryear{Applegate et~al.}{2023}]{applegate2023faster}
\begin{barticle}
\bauthor{\bsnm{Applegate}, \binits{D.}},
\bauthor{\bsnm{Hinder}, \binits{O.}},
\bauthor{\bsnm{Lu}, \binits{H.}},
\bauthor{\bsnm{Lubin}, \binits{M.}}:
\batitle{Faster first-order primal-dual methods for linear programming using
  restarts and sharpness}.
\bjtitle{Mathematical Programming}
\bvolume{201}(\bissue{1}),
\bfpage{133}--\blpage{184}
(\byear{2023})
\end{barticle}
\endbibitem

\bibitem[\protect\citeauthoryear{Liu et~al.}{2025}]{liu2025icnn}
\begin{botherref}
\oauthor{\bsnm{Liu}, \binits{Y.}},
\oauthor{\bsnm{Oliveira}, \binits{F.}},
\oauthor{\bsnm{Kronqvist}, \binits{J.}}:
{ICNN-enhanced 2SP}: Leveraging input convex neural networks for solving
  two-stage stochastic programming.
arXiv preprint arXiv:2505.05261
(2025)
\end{botherref}
\endbibitem

\bibitem[\protect\citeauthoryear{Cornu{\'e}jols
  et~al.}{1991}]{cornuejols1991comparison}
\begin{barticle}
\bauthor{\bsnm{Cornu{\'e}jols}, \binits{G.}},
\bauthor{\bsnm{Sridharan}, \binits{R.}},
\bauthor{\bsnm{Thizy}, \binits{J.-M.}}:
\batitle{A comparison of heuristics and relaxations for the capacitated plant
  location problem}.
\bjtitle{European Journal of Operational Research}
\bvolume{50}(\bissue{3}),
\bfpage{280}--\blpage{297}
(\byear{1991})
\end{barticle}
\endbibitem

\end{thebibliography}

\end{document}